\documentclass[final,1p]{elsarticle}
\usepackage{amssymb,latexsym,amsmath}
\usepackage{amsthm}
\usepackage{algorithm} 
\usepackage{color,caption,cases}
\usepackage{hyperref}
\usepackage{bm}
\usepackage{graphicx,graphics}
\usepackage{subfigure}
\usepackage{algpseudocode}
\usepackage{booktabs}
\usepackage{tabularx}
\usepackage{stmaryrd}
\usepackage[T1]{fontenc}
\usepackage{makecell}
\usepackage{multirow}
\biboptions{numbers,sort&compress}
\usepackage{chemfig}
\usepackage{float}
\allowdisplaybreaks

\newtheorem{theorem}{Theorem}[section]
\newtheorem{lemma}[theorem]{Lemma}
\newtheorem{corollary}[theorem]{Corollary}

\theoremstyle{definition}

\newtheorem{example}[theorem]{Example}

\theoremstyle{remark}
\newtheorem{remark}{Remark}
\numberwithin{equation}{section}

\newcommand{\f}{\frac}
\newcommand{\df}{\dfrac}

\newcommand{\p}{\partial}

\newcommand{\mal}{\mathcal}
\newcommand{\mbb}{\mathbb}

\newcommand{\de}{\delta}

\newcommand{\Tuv}{\mal{T}_{\mal{U} \rightarrow \mal{V}}}
\newcommand{\Tvu}{\mal{T}_{\mal{V} \rightarrow \mal{U}}}
\journal{}
\begin{document}
\begin{frontmatter}
    \title{Bound-preserving and mass-conserving projection BCFD methods for incompressible miscible displacement in porous media}		
	\author[ouc]{Xiaoying Wang} \ead{wxy7121@stu.ouc.edu.cn}
	\author[ouc,lab]{Shusen Xie} \ead{shusenxie@ouc.edu.cn}
	\author[ouc,lab]{Hongfei Fu\corref{Fu}}\ead{fhf@ouc.edu.cn}
		
	\address[ouc]{School of Mathematical Sciences, Ocean University of China, Qingdao, Shandong 266100, China}
	\address[lab]{Laboratory of Marine Mathematics, Ocean University of China, Qingdao, Shandong 266100, China}
	\cortext[Fu]{Corresponding author.}		
        
\begin{abstract}
In this paper, we propose and analyze two novel structure-preserving projection schemes for the incompressible miscible displacement problem in porous media. These schemes are specifically designed to preserve two essential physical properties--concentration bounds and the mass balance identity, and are accompanied by a rigorous error analysis for the projection approach. Both schemes are second-order accurate in space but differ in temporal accuracy: one is first-order and the other is second-order. Methodologically, both schemes comprise a prediction step for the concentration variable, based on a second-order modified upwind discretization together with an implicit-explicit block-centered finite difference (BCFD) approximation, followed by a projection (or correction) step that enforces the desired physical constraints, namely bounds and mass conservation. For the first-order temporal scheme, the velocity and pressure are  solved at the first step on staggered spatial grids with explicit treatment of concentration. In contrast, for the second-order scheme, to ensure well-posedness and to suppress non-physical oscillations in the simulation, the velocity and pressure are solved at the final step with the concentration treated implicitly. A rigorous numerical theoretical framework is established for the first-order scheme, encompassing optimal-order error estimates, stability, and proofs of the existence and uniqueness of the numerical solutions. The numerical analysis framework is further extended to the more involved second-order scheme, and the corresponding theoretical results are briefly outlined. Finally, numerical examples, including some ’real’ simulations, are presented to verify the theoretical findings and demonstrate the strong performance of the proposed schemes. In particular, the occurrence of viscous fingering is successfully captured and confirmed by the numerical results.
\end{abstract}

\begin{keyword}
Incompressible miscible displacement  \sep BCFD method  \sep Bound-preserving  \sep Mass conservation \sep Optimal-order error estimates \sep Stability 
\MSC[2020] 65M06 \sep  65M12 \sep  65M15 \sep  76S05
\end{keyword}

\end{frontmatter}
	
\section{Introduction}\label{sec:intro}
Miscible displacement, the process of one fluid displacing another in a porous medium, is of great significance in oil recovery and contaminant transport \cite{Ewing83,Bear72}. The governing system comprises an elliptic system for pressure $p$ and velocity $\bm{u}=(u^{x},u^{y})^{\top}$, derived from mass conservation of the fluid mixture and Darcy's law, together with a transport equation that enforces the conservation of the injected fluid concentration $c$ \cite{Peaceman77,Douglas83cmd}:
\begin{equation}\label{mod}
   \begin{aligned}
        & \nabla \cdot \bm{u}  = q:= q_{I} + q_{P}, &\quad \Omega \times (0,T],\\
   	  & \bm{u} = - \f{k(\bm{x})}{\mu(c)} \nabla p, &\quad  \Omega \times (0,T],\\
   	  &\phi(\bm{x})\p_{t}c + \nabla \cdot ( \bm{u}c - D \nabla c ) - q_{P}c = q_{I}c_{I}, &\quad \Omega \times (0,T],
   \end{aligned}
\end{equation}
subject to the following no-flow/homogeneous Neumann boundary conditions and initial condition:
\begin{equation}\label{mod:in}
   \begin{aligned}
	    \bm{u} \cdot \bm{n} & = 0, \quad ( \bm{u}c - D \nabla c ) \cdot \bm{n} = 0, &\quad \p \Omega \times (0,T],\\
	  c(\bm{x},0) & = c^{o}(\bm{x}), &\quad \bar{\Omega}\times \{0\},
   \end{aligned}
\end{equation}
where $\bar{\Omega}:=[x_{L},x_{R}] \times [y_{L},y_{R}]$ with boundary denoted by $\p \Omega$, $\Omega = \bar{\Omega} \setminus \p \Omega$, and $\bm{n}$ is the unit outward normal to $\p \Omega$. To ensure uniqueness of the pressure solution, the compatibility condition $\int_{\Omega} q {\rm d} \bm{x} = 0$ is enforced. Here, $\phi(\bm{x})$ and $k(\bm{x})$ are the porosity and permeability of the porous medium, respectively; $q_{I}~(>0)$ and $q_{P}~(<0)$ denote the injection and production rates; $D=D(\bm{x},t)$ is the diffusion coefficient; and $\mu(c)$ is the viscosity determined by some mixture law \cite{viscosity63}, often given by
\begin{equation}\label{mod:mu}
        \mu(c) = \mu_r \left[\mal M^{1/4} c + (1 - c) \right]^{-4},
\end{equation}
where $\mal M := \mu_r / \mu_s$ is the mobility ratio, and $\mu_r$ and $\mu_s$ are the viscosities of resident and solvent, respectively. Equation \eqref{mod:mu} indicates that the viscosity $\mu(c)$ is largely determined by the mobility ratio $\mal{M}$. When $\mal{M} > 1$, the interface between the resident petroleum and the invading water becomes unstable. The larger the mobility ratio, the more readily the mobile water tends to develop elongated fingers that grow toward the production wells, thereby bypassing substantial amounts of hydrocarbons. This phenomenon, known as viscous fingering \cite{Ewing83}, can lead to early water breakthrough once a continuous water channel connects the injection and production wells. At that point, the production wells primarily yield water, and oil recovery is significantly reduced. Consequently, it is highly desirable to design numerical algorithms capable of simulating fingering and assisting in the formulation of strategies to mitigate early water breakthrough and enhance oil recovery.

Physically, the incompressible miscible displacement problem \eqref{mod}--\eqref{mod:in} possesses two essential properties. One is mass conservation, which follows from the fundamental conservation principle of continuum mechanics and can be derived by integrating the concentration transport equation:
\begin{equation}\label{mod:mass_conservation}
  \frac{d}{dt} \int_{\Omega} \phi(\bm{x}) c(\bm{x},t) {\rm d}\bm{x}   =  \int_{\Omega} q_I(\bm{x},t) c_I(\bm{x},t) {\rm d}\bm{x} +\int_{\Omega} q_P(\bm{x},t) c(\bm{x},t) {\rm d}\bm{x}.
\end{equation}
Numerical methods that violate this physical law would lead to non-physical mass accumulation or loss, resulting in inaccurate and unstable long-term simulations. The other property is the boundedness of the concentration, i.e., $0 \le c(\bm{x},t)\le 1$. This constraint is physically mandated, as $c(\bm{x},t)$ represents the volumetric fraction of the injected fluid and consequently must remain within the physical interval $[0,1]$ for all time.  If its approximation falls outside this interval, the numerical quantity $\mal M^{1/4} c + (1 - c)$ in \eqref{mod:mu} may vanish or become sufficiently small, leading to infinite or excessively large values in the coefficient matrix of the velocity-pressure system. Thus, this arithmetic singularity may cause the linear solver to fail and the program to terminate prematurely, as will be demonstrated in the numerical experiments of Section \ref{sec:num}. Consequently, it is essential to devise numerical schemes that simultaneously preserve both mass conservation and bound preservation, thereby ensuring physically admissible solutions and enabling long-term stable simulations. 

Although mass conservation is readily achieved by many existing schemes, preserving positivity or bounds remains a nontrivial task and has received extensive attention. A variety of approaches have been developed to address this challenge, including logarithmic-term-based nonlinear formulations \cite{CWB19,SX}, cut-off techniques \cite{LBY20,YJ22}, M-matrix methods \cite{LJG18,HJW20,HZ23}, 
post-processing or limiter strategies \cite{SQW10,ZXX18}, function transformation approaches \cite{SJ21,WLF24}, and the Lagrange multiplier method \cite{XY19,SJ22-CMAME,SJ22-SIAM}. Nevertheless, the application of the aforementioned structure-preserving algorithms to miscible displacement problems remains largely unexplored. For instance, in \cite{Russell85,Rui13}, the cut-off strategy $\min\{1,\max\{C_h,0\}\}$ is employed to enforce bound preservation by simply truncating any out-of-range concentration values back onto the physical interval $[0,1]$. Although widely used in practical simulations, this approach inevitably compromises the crucial mass conservation property that the underlying physical model intrinsically satisfies. The positivity-preserving limiter technique has also been successfully applied to compressible displacement in porous media \cite{GH17,GH22,GH23}; however, as an explicit treatment of the concentration is adopted for time marching, the associated bound-preserving analysis and simulation typically entail certain time-step restrictions. In \cite{SSY24}, an energy stable, positivity-preserving and mass-conserving numerical method is developed for compressible immiscible two-phase flow with rock compressibility. The core idea is to reformulate the original pressure-based mathematical model into an alternative framework that obeys an energy dissipation law. 
The positivity-preserving property is then ensured by treating the nonlinear convex terms in the logarithmic free energy implicitly, which, however, yields a nonlinear discrete system that necessitates efficient iterative solvers. Several high-order bound-preserving and mass-conserving schemes based on the Lagrange multiplier method are developed in \cite{LXL25} for the thermodynamically consistent incompressible immiscible two-phase flow model in porous media. This approach circumvents the need for complex nonlinear constrained optimization solvers, thereby substantially improving computational efficiency. Unfortunately, the error analysis of such numerical schemes has received little attention to date and remains a challenging task; to the best of our knowledge, the only available  result in this direction is that in \cite{SJ22-SIAM} for a class of semilinear and quasi-linear parabolic equations.  More recently, a cell-centered Galerkin method is developed in \cite{CCGM26} for the incompressible miscible displacement in heterogeneous porous media, it is shown that the coefficient matrix of the method for the Poisson problem is an M-matrix or a monotone matrix; however, no analogous  proof is provided for the miscible displacement problem itself. 
In conclusion, although great achievements have been made in developing positivity- and bound-preserving schemes for various displacement models in porous media, it remains highly necessary to develop linear numerical schemes that are unconditionally bound-preserving and mass-conserving, and to establish rigorous error estimates for such schemes.

On the other hand, a more natural approach to numerically enforce physical constraints is to project (or correct) the numerical solution onto the constrained manifold \cite{SJ92,CYY18}. Recently, a class of positivity- and mass-preserving projection finite difference methods is developed in \cite{CYY24} for the Poisson-Nernst-Planck equation. The underlying idea is to first employ a standard discretization to produce a predicted concentration of  charged particles, which typically fails to preserve positivity and/or mass conservation, and then project it onto the constraint manifold characterized by these two properties, thereby obtaining a corrected concentration of charged particles that rigorously satisfies the physical constraints. In practical simulation, this projection step is characterized by the well-known Karush--Kuhn--Tucker (KKT) conditions, involving an explicit update and a single-variable nonlinear solve for the positivity-preserving constraint; moreover, the existence and uniqueness of the corrected solution are naturally guaranteed by standard convex analysis. More importantly, by introducing an auxiliary exact solution that lies in the constraint manifold and differs from the exact solution only by a known numerical integration error, and leveraging the well-known non-expansive nature of the projection operator, rigorous $L^2$-norm error estimates are established in this work, which are both second-order accurate in space and time. Subsequently, this algorithmic framework has been successfully extended to the Patlak-Keller-Segel equation \cite{CYY26} and the multi-species Keller-Segel chemotaxis system \cite{ZZF26}. However, to the best of our knowledge, the projection idea has not yet been applied to the miscible displacement problem in porous media. Motivated by the aforementioned works, our objective is to design some unconditionally bound-preserving and mass-conserving BCFD projection algorithms for model \eqref{mod}--\eqref{mod:in}, and to establish corresponding rigorous error estimates for the approximations of velocity $\bm{u}$, pressure $p$, and concentration $c$. However, it should be noted that the physical mass in the models considered in \cite{CYY24,CYY26,ZZF26} remains constant over time, while the incompressible miscible displacement model exhibits a mass balance identity in the sense of an injection-production balance, characterized by a time-dependent total mass (see \eqref{mod:mass_conservation}). Consequently, neither the projection algorithm nor the theoretical analysis framework developed in \cite{CYY24,CYY26,ZZF26} can be directly applied to the present model problem. 

To address these challenges, we construct a specially designed weighted $L^{2}$-projection operator for the first-order temporal scheme and rigorously establish its non-expansive property in the corresponding weighted norm. For the purpose of analysis, we further introduce a novel "biased" auxiliary concentration solution, which differs from the exact solution by a provably small perturbation of the same order as the numerical error of the scheme. These two ingredients are essential for establishing a valid error analysis framework tailored to the present model, yet they alone are insufficient to  complete the error estimates. To that end, we establish the relationship between the "biased" errors and the original errors in Lemma \ref{lem:be:ec}. The ideas outlined above are subsequently extended to the second-order scheme. In summary, we propose two novel structure-preserving prediction-then-projection schemes for the incompressible miscible displacement problem. Both schemes are second-order accurate in space by using the well-known block-centered finite difference (BCFD) method \cite{WRF26,XJ22,XJ25,ZZF26,Weiser88,Arbogast97} and the second-order modified upwind technique \cite{Lazarov96-SIAMNA,LD17-JCAM,LD20-JCP}; temporally, one is first-order, whereas the other is second-order. They differ not only in temporal discretizations but also in the ordering of computational steps. In the first-order scheme, we first project the predicted concentration onto the manifold that enforces both bound preservation and mass conservation, yielding a numerical solution that satisfies the desired physical constraints. For the second-order scheme, we reverse the update order of the velocity-pressure system and the concentration to ensure well-posedness and numerical stability. This design choice is necessitated by the fact that the velocity-pressure system depends on an extrapolated concentration, which may still violate the physical bounds even after projection, see Example \ref{exm:e3} in Section \ref{sec:num}. The main contributions of this paper are as follows:
\begin{itemize}
    \item A class of prediction-then-projection BCFD schemes is developed for the incompressible miscible displacement problem. The schemes are almost linear (except for the efficient $L^{2}$- or $H^{1}$-projection step) and are fully decoupled through a judicious ordering of the computational sequences, thereby significantly enhancing computational efficiency.

    \item The schemes are unconditionally bound-preserving and mass-conserving; which ensures the well-posedness of the discretized system and thereby enhances the physical realism of the simulations.

    \item In the error analysis, we construct a specially designed "biased" auxiliary concentration solution. By establishing its perturbation error and the relationship between the "biased" error and the original error (see Lemmas \ref{lem:alpha:be} and \ref{lem:be:ec} for the first-order scheme, and Lemmas \ref{lem:alpha:cn} and \ref{lem:cn:ec} for the second-order scheme), we rigorously derive optimal-order error estimates for the projection algorithms. To the best of our knowledge, this is the first such error analysis for the projection method applied to incompressible miscible displacement problems with time-varying total mass. 

    \item Numerical experiments confirm the theoretical findings. Furthermore, the proposed schemes accurately simulate the standard five-spot well pattern for a oil-water system in porous media. Notably, the viscous fingering phenomenon is well captured and verified.
\end{itemize}

The outline of the paper is presented as follows. In Section \ref{sec:scm}, we first introduce some notations and preliminary lemmas, and then propose the two bound‑preserving and mass‑conserving BCFD projection schemes. In Section \ref{sec:error_analysis}, Optimal-order error estimates, stability results, together with the unique solvability of solutions to the proposed schemes are derived. In Section \ref{sec:num}, we present several numerical experiments to validate the accuracy, physical property-preserving, and reliability of the proposed scheme in some 'real' simulations. Concluding remarks are given in Section \ref{sec:conclusion}.  Throughout this paper, we denote by $K$ with or without subscripts a generic positive constant that is independent of the grid parameters, but may have different values in different occurrences.

\section{Bound-preserving and mass-conserving projection schemes}\label{sec:scm}
In this section, we shall develop two novel linear, bound-preserving and mass-conserving BCFD schemes for model \eqref{mod}--\eqref{mod:in}. Throughout the paper, we suppose that the following assumptions hold:
\begin{itemize}
	\item[(A1)] The exact solutions of model \eqref{mod}--\eqref{mod:in} satisfy
	$$
	\begin{aligned}
		& c \in W^{1,\infty}([0,T] \times\Omega) \cap L^{\infty}(0,T;H^{4}(\Omega)) \cap H^{3}(0,T;L^{\infty}(\Omega)) ,\\
		& \bm{u} \in  L^{\infty}([0,T] \times\Omega) \cap L^{\infty}(0,T;H^{3}(\Omega)),\quad  
		p \in  L^{\infty}(0,T;H^{3}(\Omega)).
	\end{aligned}
	$$
   In addition, there exists a positive constant $K_{e}$ such that
    $$
	\|c\|_{W^{1,\infty}([0,T] \times\Omega)}  + \|\bm{u}\|_{L^{\infty}([0,T]\times \Omega)} \leq K_{e}.
	$$

	\item [(A2)] There exist positive constants $\phi_{*}$, $\phi^{*}$, $k_{*}$, $k^{*}$, $D_{*}$, $D^{*}$ and $K_{p}$, such that
	$$
	\phi_{*} \leq \phi(\bm{x}) \leq \phi^{*}, ~
	k_{*} \leq k(\bm{x}) \leq k^{*}, ~ D_{*} \le D(\bm{x},t) \leq D^{*}, ~
    \|q \|_{L^{\infty}([0,T]\times \Omega)}  \leq K_{p}.
	$$
    Furthermore, we assume that there exist positive constants $\mu_{*}$, $\mu^{*}$ and $L_{\mu}$ such that
	$$
	\mu_{*} \leq \mu(w) \leq \mu^{*},
    \quad
    |\mu(w)-\mu(v)| \leq L_{\mu}|w-v|,
    \qquad \text{for}~ w,v\in [0,1].
	$$
\end{itemize}
For simplicity, below we denote $a(w) := \mu(w)/k(\bm{x})$. Then assumption (A2) implies that there exist positive constants $a_{*}:= { \mu_{*} }/{ k^{*} }$, $a^{*}:= { \mu^{*} }/{ k_{*} }$ and $L_{a}:= L_{\mu}/k_{*}$ such that
\begin{equation}\label{assump:a}
    a_{*} \le a(w) \le a^{*},
    \quad
    |a(w) - a(v)| \leq L_{a}|w-v|,
    \qquad \text{for}~ w,v\in [0,1].
\end{equation}
Specifically, for the viscosity $\mu(c)$ defined in \eqref{mod:mu}, we have
$$
\mu_{*}:= \min\{ \mu_{r}, \mu_{s} \},~
\mu^{*}:= \max\{ \mu_{r}, \mu_{s} \},~
L_{\mu}:= \max \big\{ 
    4\mu_{r}\big| \mal{M}^{1/4} - 1 \big|, 
    4\mu_{r} \big| \mal{M}^{1/4} - 1 \big|\mal{M}^{-5/4}
    \big\}.
$$

\subsection{Notations and preliminary results}
We first present some basic notations. Let $N_{t}$ be a positive integer, and let the time interval $[0,T]$ be uniformly partitioned by $t^{n} = n \tau$, with the time step given by $\tau=T/N_{t}$. For temporal grid functions $\{\theta^{n}\}_{n \geq 0}$ and $\{\widehat{\theta}^{n}\}_{n \geq 0}$, define
$$
  d_{\tau}\theta^{n}= \f{ \theta^{n} - \theta^{n-1} }{ \tau }, \quad
  d_{\tau}\widehat{\theta}^{n}:= \f{ \widehat{\theta}^{n} - \widehat{\theta}^{n-1} }{ \tau }, \quad
  D_{\tau}\widehat{\theta}^{n}= \f{ \widehat{\theta}^{n} - \theta^{n-1} }{ \tau }.
$$  

Let $ N_x $ and $N_y$ be the number of grids along the $x$- and $y$-coordinates, respectively. We introduce staggered spatial grids along each spatial direction as follows:
$$
  \begin{aligned}
    \Pi_{x} :~ x_{i+1/2} = x_{L}+ih^{x}, \ i = 0,\ldots, N_{x}, \quad 
        \Pi_{x}^{*}:~ x_{i} = \f{ x_{i+1/2} +x_{i-1/2} }{2}, \ i = 1,\ldots,N_{x},\\
    \Pi_{y} :~ y_{j+1/2} = y_{L}+jh^{y}, \ j = 0,\ldots, N_{y}, \quad 
         \Pi_{y}^{*}:~ y_{j} = \f{ y_{j+1/2} + y_{j-1/2} }{2}, \ j = 1,\ldots,N_{y},
   \end{aligned}
$$
with spatial grid sizes $h^{x} = (x_{R}-x_{L})/N_{x}$ and $h^{y} = (y_{R}-y_{L})/N_{y}$. Let $h:= \max\{ h^{x}, h^{y}\}$. On the staggered spatial grids, we introduce the discrete function spaces for the approximations of $c$, $p$, and the two components of $\bm{u}$, respectively, by
$$
\begin{aligned}
    &\mal{C} := \left\lbrace g=\{g_{i,j}\} \mid (x_{i},y_{j})\in \Pi_{x}^{*}\times \Pi_{y}^{*} \right\rbrace ,\\
	&\mal{U} := \left\lbrace g = \{g_{i+1/2,j}\}  \mid  (x_{i+1/2},y_{j})\in \Pi_{x}\times \Pi_{y}^{*},
	~ g_{1/2,j}=g_{N_{x}+1/2,j}=0\right\rbrace,\\
	&\mal{V} := \left\lbrace g= \{g_{i,j+1/2}\}  \mid  (x_{i},y_{j+1/2})\in \Pi_{x}^{*}\times \Pi_{y},
	~ g_{i,1/2}=g_{i,N_{y}+1/2}=0  \right\rbrace.
\end{aligned}
$$
Moreover, for $w \in \mal{C}$ and $ \bm{v} = (v^{x},v^{y})^{\top} \in \mal{U} \times \mal{V}$, we define the discrete gradient and divergence operators as 
$$
  \nabla_{h}w=(d_{x} w,d_{y} w)^{\top}, \qquad \nabla_{h} \cdot \bm{v}= \de_{x} v^{x} + \de_{y} v^{y},
$$  
where the difference operators $d_{x} w \in \mal{U}$, $d_{y} w \in \mal{V}$ and $\de_{x}v^{x}, \de_{y}v^{y} \in \mal{C}$ are given by
$$
   [d_x w]_{i+1/2,j}=\df{w_{i+1,j}-w_{i,j}}{h^{x}},\quad
   [d_y w]_{i,j+1/2}=\df{w_{i,j+1}-w_{i,j}}{h^{y}},
$$
$$
   [\de_x v^{x}]_{i,j}=\df{v^{x}_{i+1/2,j}-v^{x}_{i-1/2,j}}{h^{x}},\quad
   [\de_y v^{y}]_{i,j}=\df{v^{y}_{i,j+1/2}-v^{y}_{i,j-1/2}}{h^{y}}.
$$ 

In addition, we also introduce some local weighted average operators and the second-order modified upwind discretization. First, the local directional average operators $\mal{T}_{x}:~\mal{C} \rightarrow \mal{U}$ and $\mal{T}_{y}:~\mal{C} \rightarrow \mal{V}$ are defined as
$$
  [ \mal{T}_{x} w ]_{i+1/2,j} = \f{w_{i,j} + w_{i+1,j}  }{2},\quad 
  [ \mal{T}_{y} w ]_{i,j+1/2} = \f{w_{i,j} + w_{i,j+1}  }{2}.
$$	
Second, for $w \in \mal{C}$ satisfying the boundary conditions $w_{i,j}=w_{i+1,j}$ for $i \in\{0,N_{x}\}$, $j=1,\ldots,N_{y}$, and $w_{i, j}=w_{i, j+1}$ for $i=1,\ldots,N_{x}$, $j \in \{0,N_{y}\}$, and a given velocity field $\bm{v} = (v^{x},v^{y})^{\top} \in \mal{U}\times \mal{V}$, we introduce the second-order modified upwind FD approximation of $\bm{v} w - D(\bm{x},t) \nabla w$ by \cite{Lazarov96-SIAMNA,LD17-JCAM,LD20-JCP}:
\begin{equation}\label{mupwind}
     \bm{\mal{B}}(w)\bm{v} - \bm{\mal{D}}(\bm{v}) \nabla_{h} w = \left(
  \begin{aligned}
      &v^{x}\,\mal{H}_{x} w\\
      &v^{y}\,\mal{H}_{y} w
  \end{aligned} \right)
  - \left( 
  \begin{aligned}
     & \underline{D}_{1,1}      &  \\
     &       & \underline{D}_{2,2}  
  \end{aligned} \right) \nabla_{h} w
  \in 
  \left(
  \begin{aligned}
      &\mal{U} \\
      &\mal{V}
  \end{aligned} \right),
\end{equation}
where $\mal{H}_{x} w$ and $\mal{H}_{y} w$ are taken as the upwind values on $\Pi_{x} \times \Pi_{y}^{*}$ and $\Pi_{x}^{*} \times \Pi_{y}$, respectively, i.e., determined by the signs of the velocity components:
\begin{equation*}
  \begin{aligned}
      & [\mal{H}_{x} w]_{i+1/2,j} = \psi\big(v_{i+1/2,j}^{x}\big) w_{i,j} + \big( 1 - \psi\big(v_{i+1/2,j}^{x} \big) \big) w_{i+1,j}, \\
      & [\mal{H}_{y} w]_{i,j+1/2} = \psi\big(v_{i,j+1/2}^{y}\big) w_{i,j} + \big( 1 - \psi\big(v_{i,j+1/2}^{y}\big) \big) w_{i,j+1},
  \end{aligned}
\end{equation*}
and 
\begin{equation*}
    \psi(s)=\left\{
    \begin{aligned}
        & 1, & s \geq 0,\\
        & 0, & s <0,
    \end{aligned}
    \right. \quad 
    \underline{D}_{1,1} = \f{D}{1+\f{|v^{x}|}{2D}h^{x}},\quad
    \underline{D}_{2,2} = \f{D}{1+\f{|v^{y}|}{2D}h^{y}}.
\end{equation*}
Furthermore,  we define the compact notation:
$$
\bm{\mal{A}}(w)= \left( 
   \begin{aligned}
	&a(\mal{T}_{x}w) &\\
	& &a(\mal{T}_{y}w)
  \end{aligned}\right).
$$

 Finally, for any vectors $\bm{w}=(w^x,w^y)^{\top}, ~\bm{v}=(v^x, v^y)^{\top}\in \mal{U} \times \mal{V}$ and scalars $w, v \in \mal{C}$, we introduce the following discrete $L^2$ inner products
$$
   (\bm{w},\bm{v})_{\rm T}=(w^x,v^x)_{x}+(w^y,v^y)_{y},   \quad 
   (w,v)_{\rm M}=\sum_{i=1}^{N_{x}}\sum_{j=1}^{N_{y}}h^{x} h^{y}\, w_{i,j}\, v_{i,j},
$$
with
$$
	\begin{aligned}
		 (w^x,v^x)_{x}= \sum_{i=1}^{N_{x}-1}\sum_{j=1}^{N_{y}} h^{x} h^{y}\, w^x_{i+1/2,j}\, v^x_{i+1/2,j},\quad        (w^y,v^y)_{y}=\sum_{i=1}^{N_{x}}\sum_{j=1}^{N_{y}-1} h^{x} h^{y}\, w^y_{i,j+1/2}\, v^y_{i,j+1/2}.
	\end{aligned}
$$
The corresponding discrete norms are defined as $\| \cdot \|_{\ell} = \sqrt{(\cdot,\cdot)_{\ell}}$ for $\ell = x, y, \rm{T}, \rm{\rm{M}}$. Besides, we also introduce the following weighted $L^2$-norm and weighted $H^{1}$-seminorm: 
$$
  \|w\|_{{\rm M},\sigma} := \sqrt{(\sigma w,w)_{\rm M}}, \qquad \|\bm{w}\|_{{\rm T},\bm{E}}:=\sqrt{( \bm{E} \bm{w},\bm{w})_{\rm T}},
$$ 
where $\sigma \ge 0$ is a prescribed weight function and $\bm{E}$ is a given positive definite weight matrix.

Next, we present several useful and well-known lemmas that will be used in the theoretical analysis.
\begin{lemma}[\cite{SYL21,XJ25}]\label{lem:dis}
Let $w \in \mal{C}$ and $\bm{v} \in \mal{U} \times \mal{V}$. Then, we have
$$
\big( \nabla_{h} \cdot \bm{v}, w \big)_{\rm M} = - \big( \bm{v}, \nabla_{h} w \big)_{\rm T}.
$$
\end{lemma}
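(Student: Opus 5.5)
The identity is a discrete Green's formula (summation by parts). Because the operators act separately in each direction, I will prove it one direction at a time. By the definitions of $\nabla_h\cdot$, $\nabla_h$ and $(\cdot,\cdot)_{\rm T}$, it is enough to establish the two one-dimensional identities
$$
(\de_x v^x, w)_{\rm M} = -(v^x, d_x w)_x, \qquad (\de_y v^y, w)_{\rm M} = -(v^y, d_y w)_y,
$$
and then add them. The second is the same as the first with the roles of $x$ and $y$ exchanged, so I only treat the $x$-direction.

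Fix $j$ and work with the inner sum over $i$. Using the definition of $\de_x$,
$$
\sum_{i=1}^{N_x} h^x h^y\,[\de_x v^x]_{i,j} w_{i,j}
= h^y \sum_{i=1}^{N_x} \big(v^x_{i+1/2,j}-v^x_{i-1/2,j}\big) w_{i,j}.
$$
I split this into two sums and shift the index in the second one ($i \to i+1$). This gives
$$
h^y\Big[\sum_{i=1}^{N_x} v^x_{i+1/2,j} w_{i,j} - \sum_{i=0}^{N_x-1} v^x_{i+1/2,j} w_{i+1,j}\Big].
$$
The boundary terms are the ones at $i=N_x$ in the first sum and at $i=0$ in the second. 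They involve $v^x_{N_x+1/2,j}$ and $v^x_{1/2,j}$, and both vanish because $v^x\in\mal U$. So only the indices $i=1,\dots,N_x-1$ remain, and the expression equals
$$
-h^y\sum_{i=1}^{N_x-1} v^x_{i+1/2,j}\big(w_{i+1,j}-w_{i,j}\big)
= -\sum_{i=1}^{N_x-1} h^x h^y\, v^x_{i+1/2,j}[d_x w]_{i+1/2,j}.
$$
Summing over $j=1,\dots,N_y$ yields exactly $-(v^x,d_xw)_x$, since the index range in $(\cdot,\cdot)_x$ is $i=1,\dots,N_x-1$.

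The only point needing care is the bookkeeping of indices at the boundary. The ghost values $w_{0,j}$ and $w_{N_x+1,j}$ appear only multiplied by zero velocity values, so no boundary condition on $w$ is required. Also, the ranges in $(\cdot,\cdot)_x$ and $(\cdot,\cdot)_y$ match the interior faces precisely because the boundary faces carry zero flux. Adding the $x$- and $y$-identities completes the argument.
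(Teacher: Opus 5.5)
Your summation-by-parts argument is correct: treating each direction separately, shifting the index, and eliminating the two boundary terms via $v^x_{1/2,j}=v^x_{N_x+1/2,j}=0$ (and likewise in $y$) gives exactly $-(v^x,d_xw)_x-(v^y,d_yw)_y$, with the surviving index range matching the interior faces in $(\cdot,\cdot)_x$ and $(\cdot,\cdot)_y$. The paper gives no proof of its own and simply cites \cite{SYL21,XJ25} for this standard discrete Green's formula, so your route is essentially the intended one.
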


\begin{lemma}[\cite{SYL21}]\label{lem:seminorm}
	Assume that $w \in \mal{C}$ with $w_{1,1}=0$. Then, we have 
	\begin{equation*}
		\| w \|_{\rm M}^{2} \leq 2(x_{R}-x_{L})^{2} \|d_{x}w \|_{x}^{2} + 2(y_{R}-y_{L})^{2} \|d_{y}w \|_{y}^{2} \leq K_{o} \| \nabla_{h}w \|_{\rm T}^{2},
	\end{equation*}
	where $K_{o}:= 2(x_{R}-x_{L})^{2} + 2(y_{R}-y_{L})^{2}$. 
\end{lemma}

\begin{lemma}[\cite{TNA09}]\label{lem:t:err}
	Assume that $g \in H^{2}(\Omega)$. Then, there exists a positive constant $K$, independent of $h$, such that
	$$
	\|g - \mal{T}_{x}g\|_{x} + \|g - \mal{T}_{y}g\|_{y} \leq Kh^{2}.
	$$
\end{lemma}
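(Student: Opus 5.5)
The plan is a local Bramble--Hilbert argument on each staggered cell, followed by a scaling argument and a summation over cells. Here the grid function $g$ is understood through its point values, which are well defined because $H^{2}(\Omega)\hookrightarrow C(\bar\Omega)$ in two dimensions. A naive one-dimensional Taylor expansion in $x$ along the lines $y=y_j$ would require line traces of $g_{xx}$, which are not controlled by the $H^2$ norm. I therefore work with genuinely two-dimensional patches. By symmetry it suffices to treat $\|g-\mal{T}_x g\|_x$; the $y$-term is handled identically with the roles of $x$ and $y$ exchanged.

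\textbf{Step 1 (local functional).} For $1\le i\le N_x-1$ and $1\le j\le N_y$, let $P_{i+1/2,j}:=[x_i,x_{i+1}]\times[y_{j-1/2},y_{j+1/2}]$. This rectangle has side lengths $h^x$ and $h^y$ and lies in $\bar\Omega$. On it define
\[
\ell_{i+1/2,j}(g):=g(x_{i+1/2},y_j)-\tfrac12\big(g(x_i,y_j)+g(x_{i+1},y_j)\big).
\]
The functional $\ell_{i+1/2,j}$ has two properties:
\begin{itemize}
\item it is linear and bounded on $H^2(P_{i+1/2,j})$, since point evaluation is continuous there by the Sobolev embedding;
\item it vanishes on all polynomials of degree at most one, because $x_{i+1/2}$ is the midpoint of $x_i$ and $x_{i+1}$ and all three points share the same $y$-coordinate.
\end{itemize}

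\textbf{Step 2 (scaling).} Map $P_{i+1/2,j}$ affinely onto the reference square $\hat P=[0,1]^2$. The reference functional $\hat\ell$ is independent of $i,j,h$, bounded on $H^2(\hat P)$, and annihilates $\mathbb P_1$. The Bramble--Hilbert lemma therefore gives
\[
|\hat\ell(\hat g)|\le \hat K\,|\hat g|_{H^2(\hat P)}.
\]
Undoing the scaling, with $h^x,h^y\le h$, we have $|\hat g|_{H^2(\hat P)}\le K h^{2}(h^xh^y)^{-1/2}|g|_{H^2(P_{i+1/2,j})}$. This assumes the mesh is quasi-uniform, $h^x\simeq h^y$, which I take as implicit since $h=\max\{h^x,h^y\}$. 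Hence
\[
|\ell_{i+1/2,j}(g)|^2\le K\,\frac{h^4}{h^xh^y}\,|g|^2_{H^2(P_{i+1/2,j})}.
\]
Getting the exponent right here is the crucial point and the main place to be careful. The point-value functional contributes the factor $(h^xh^y)^{-1/2}$, and this is exactly cancelled by the cell weight $h^xh^y$ in the discrete norm.

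\textbf{Step 3 (summation).} Multiply the local bound by $h^xh^y$ and sum over $i,j$:
\[
\|g-\mal{T}_xg\|_x^2\le K h^4\sum_{i,j}|g|^2_{H^2(P_{i+1/2,j})}.
\]
Each point of $\Omega$ lies in at most two patches, so the right-hand side is bounded by $2Kh^4|g|^2_{H^2(\Omega)}$. Taking square roots gives $\|g-\mal{T}_xg\|_x\le Kh^2\|g\|_{H^2(\Omega)}$. Combining this with the analogous bound for $\mal{T}_y$ yields the lemma, with $K$ depending only on $\|g\|_{H^2(\Omega)}$ and the mesh ratio.
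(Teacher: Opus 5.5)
Your proposal is correct. It is the standard Bramble--Hilbert/scaling argument that the paper's bare citation to a numerical-analysis textbook stands in for; the paper gives no proof of its own, and it attributes the companion lemma for $\nabla_h$ and $\nabla_h\cdot$ to the Bramble--Hilbert lemma as well.

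One correction: the quasi-uniformity assumption is unnecessary. Under your diagonal affine map the second derivatives pick up the factors $(h^x)^2$, $h^xh^y$ and $(h^y)^2$, each at most $h^2$. So $|\hat g|_{H^2(\hat P)}\le h^2(h^xh^y)^{-1/2}|g|_{H^2(P_{i+1/2,j})}$ holds with constant one for any aspect ratio, and the final $K$ does not depend on the mesh ratio.
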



The following result is a direct consequence of the Bramble-Hilbert lemma (cf.~\cite{BH}).
\begin{lemma}\label{lem:d:err}
	If $\bm{f} \in H^{3}(\Omega)$ and $g \in H^{3}(\Omega)$. Then, we have
	\begin{equation*}
		\| \nabla_{h} \cdot \bm{f} - \nabla \cdot \bm{f} \|_{\rm M}  + \| \nabla_{h}g - \nabla g \|_{\rm T}  \leq Kh^{2}.
	\end{equation*}
\end{lemma}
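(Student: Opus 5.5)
The plan is to prove the estimate componentwise, using a local Taylor expansion at each discrete node and then summing with the discrete norms. This is the classical Bramble--Hilbert-type argument. The divergence part and the gradient part are handled the same way. By symmetry it suffices to treat $\de_x f^{x} - \p_x f^{x}$ on $\mal{C}$ and $d_x g - \p_x g$ on $\mal{U}$; the $y$-direction terms are analogous, and the triangle inequality combines them.

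First consider a cell center $(x_i,y_j)$, where the quantity is
$$
[\de_x f^x]_{i,j}-\p_x f^x(x_i,y_j)=\f{f^x(x_{i+1/2},y_j)-f^x(x_{i-1/2},y_j)}{h^x}-\p_x f^x(x_i,y_j).
$$
This functional is linear in $f^x$ and annihilates all polynomials of degree $\le 2$ in $x$: the central difference is exact for quadratics, since the symmetric stencil cancels the even-order terms.

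Next I would pass from pointwise values to integrals. Point values of an $H^3$ function in 2D are controlled by the Sobolev embedding $H^2\subset C^0$, applied on the reference cell $\hat R=[-1,1]^2$ after the affine scaling $(x,y)=(x_i,y_j)+(h^x\hat x,h^y\hat y)$. On $\hat R$ the functional $\hat\ell(\hat f)=\hat f(1/2,0)-\hat f(-1/2,0)-\p_{\hat x}\hat f(0,0)$ involves a derivative at a point. It is therefore bounded on $H^3(\hat R)$, because $H^3\subset C^1$ in 2D, and it vanishes on $P_2$.

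By Bramble--Hilbert, $|\hat\ell(\hat f)|\le K|\hat f|_{H^3(\hat R)}$. Scaling back, with quasi-uniform grids $h^x\sim h^y\sim h$, gives
$$
|[\de_x f^x]_{i,j}-\p_x f^x(x_i,y_j)|\le K h^{2}\,h^{-1}|f^x|_{H^3(R_{i,j})},
$$
where $R_{i,j}$ is a patch of $O(1)$ neighboring cells. Squaring, multiplying by $h^xh^y$ and summing, the bounded overlap of the patches yields $\|\de_x f^x-\p_x f^x\|_{\rm M}^2\le Kh^4\|f^x\|_{H^3(\Omega)}^2$.

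The gradient term $d_x g-\p_x g$ at the interior edges $x_{i+1/2}$, $i=1,\ldots,N_x-1$, is again a central difference about the midpoint $x_{i+1/2}$. The identical argument applies on a patch around each edge. Only interior edges enter $\|\cdot\|_x$, so no boundary stencil is needed.

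The only delicate point is the scaling and embedding bookkeeping: justifying point evaluation of a first derivative for $H^3$ functions in 2D, and tracking the powers of $h$ so that the factor $h^{-1}$ from the area element is exactly absorbed by the $h^xh^y$ weight of the discrete norm. This requires the mesh to be quasi-uniform, i.e.\ $h^x/h^y$ bounded, which I would state as an assumption, and it is implicit in taking $h=\max\{h^x,h^y\}$.
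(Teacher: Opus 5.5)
Your proposal is correct and is essentially the paper's route. The paper gives no argument beyond attributing the estimate to the Bramble--Hilbert lemma, and your scaled Bramble--Hilbert argument is the standard way to fill this in: a functional that is bounded on $H^3$ (via $H^3\subset C^1$ in 2D) and vanishes on $P_2$, followed by bounded-overlap summation under quasi-uniformity. One small bookkeeping fix: for cells touching $\partial\Omega$ your scaled patch $[-1,1]^2$ leaves $\Omega$, so take the reference domain to be $[-1/2,1/2]^2$ instead. This means using the closed primal cell for $\delta_x f^x$ and the closed dual cell $[x_i,x_{i+1}]\times[y_{j-1/2},y_{j+1/2}]$ for $d_x g$; both already contain the stencil points $\hat x=\pm 1/2$.
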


Following the results in \cite{Lazarov96-SIAMNA,LD17-JCAM}, the modified second‑order upwind FD approximation \eqref{mupwind} is second-order accurate, that is
\begin{lemma}\label{lem:upwind:err}
    If $\bm{v} \in L^{\infty}(\Omega)$ and $w \in H^{3}(\Omega)$. Then, we have
    \begin{equation*}
		  \| \bigl(\bm{v} w - D(\bm{x},t) \nabla w\bigr) - \bigl(\bm{\mal{B}}(w)\bm{v} - \bm{\mal{D}}(\bm{v}) \nabla_{h} w \bigr)\|_{\rm T}  \leq Kh^{2}.
	\end{equation*}
\end{lemma}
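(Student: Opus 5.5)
The plan is to reduce the estimate to a componentwise, cell-by-cell consistency analysis on the staggered grid, and then sum. By the definition of $\|\cdot\|_{\rm T}$ it suffices to treat the $x$-component on $\mal{U}$; the $y$-component is symmetric. At a node $(x_{i+1/2},y_j)$ the quantity to bound is
$$
\tau_{i+1/2,j} := \bigl(v^x w - D\p_x w\bigr)(x_{i+1/2},y_j) - \Bigl( v^x_{i+1/2,j}[\mal{H}_x w]_{i+1/2,j} - \underline{D}_{1,1}[d_x w]_{i+1/2,j} \Bigr).
$$
I will interpret $v^x$ as the pointwise value (restriction) of the velocity at the face. For the discrete quantities to be pointwise meaningful and for $\|\cdot\|_{\rm T}$ to be bounded by an $L^2$ integral of derivatives, I will in fact use that $\bm v$ is Lipschitz/bounded with bounded derivatives, as in (A1).

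The key step is a Taylor expansion about $x_{i+1/2}$. Suppose $v^x\ge 0$, so $\mal{H}_x w = w_{i,j} = w(x_{i+1/2}) - \tfrac{h^x}{2}\p_x w + \tfrac{(h^x)^2}{8}\p_x^2 w + O((h^x)^3)$. Then
$$
v^x \mal{H}_x w = v^x w - \tfrac{h^x}{2}|v^x|\p_x w + O(h^2),
$$
and the negative case gives the same formula with $|v^x|$. Meanwhile $[d_x w]_{i+1/2,j} = \p_x w + O(h^2)$ by the symmetric difference.

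Next I expand the modified coefficient:
$$
\underline{D}_{1,1} = D\Bigl(1+\tfrac{|v^x|h^x}{2D}\Bigr)^{-1} = D - \tfrac{|v^x| h^x}{2} + \tfrac{|v^x|^2 (h^x)^2}{4D}\Bigl(1+\tfrac{|v^x|h^x}{2D}\Bigr)^{-1}.
$$
The last term is bounded by $K h^2$ using $D\ge D_*$ and $\|\bm v\|_{L^\infty}$ bounded. Combining the two expansions, the first-order terms $\pm\tfrac{h^x}{2}|v^x|\p_x w$ cancel exactly. This cancellation is precisely the design of the modified upwind scheme, and it leaves $\tau_{i+1/2,j} = O(h^2)$ with the constant involving third derivatives of $w$ and $\|\bm v\|_{L^\infty}$, $D_*$, $D^*$.

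To make the bound rigorous in the $H^3$ norm rather than pointwise, I will follow Lemma \ref{lem:d:err}. On the cell $[x_i,x_{i+1}]\times[y_{j-1/2},y_{j+1/2}]$, the local truncation functional $w\mapsto \tau_{i+1/2,j}$ annihilates quadratics, by the cancellation above with $v$, $D$ frozen. Applying the Bramble--Hilbert lemma with scaling then gives $|\tau_{i+1/2,j}|^2 h^xh^y \le K h^4 |w|^2_{H^3(\text{cell})}$. Summing over cells yields the $Kh^2$ bound.

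The main obstacle is that $v$ and $D$ vary in space while the upwind switch $\psi$ is discontinuous in $v$. I will avoid this by freezing $v^x_{i+1/2,j}$ and $D$ at the face, so the functional is linear in $w$ with coefficients uniformly bounded by $\|\bm v\|_{L^\infty}$, $D_*$ and $D^*$. No regularity of $\bm v$ beyond $L^\infty$ is then needed for the discretization error itself. The only remaining error is the evaluation of $\bm v w$ at the face, which is exact because the same frozen value is used in both terms.
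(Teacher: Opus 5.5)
The paper does not prove this lemma; it only cites Lazarov--Mishev--Vassilevski and Zhou--Liang. Your Taylor mechanism is the standard one behind those references, and two parts of it are correct. The cancellation of the $\pm\f{h^{x}}{2}|v^{x}|\p_{x}w$ terms is right. Freezing $v^{x}_{i+1/2,j}$ and $D$ at the face is also the right reading: the lemma is applied with $\bm{v}=\bm{U}_{h}^{n}$, a grid function known only to be bounded, so the Lipschitz assumption you invoke at the start is unavailable and, as you note at the end, unnecessary.

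\textbf{The gap.} The step that fails is the rigorous one: the local truncation functional does \emph{not} annihilate quadratics. The cancellation removes only the $O(h)$ terms. What survives is $-\f{(h^{x})^{2}}{8}v^{x}\p_{x}^{2}w+\f{|v^{x}|^{2}(h^{x})^{2}}{4D(1+a)}\p_{x}w+\cdots$, with $a:=\f{|v^{x}|h^{x}}{2D}$. Concretely, take $s=x-x_{i+1/2}$ and $v^{x}_{i+1/2,j}>0$. Then $w=s^{2}$ gives $\tau_{i+1/2,j}=-v^{x}(h^{x})^{2}/4$, and even $w=s$ gives $\tau_{i+1/2,j}=\f{|v^{x}|^{2}(h^{x})^{2}}{4D(1+a)}\neq0$. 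Only constants are reproduced exactly. So your local estimate $|\tau_{i+1/2,j}|^{2}h^{x}h^{y}\le Kh^{4}|w|^{2}_{H^{3}(\text{cell})}$ is false, since its right-hand side vanishes for quadratic $w$. For the same reason, the error is not controlled by third derivatives of $w$ alone.

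\textbf{The repair.} The lemma still follows once you split the functional according to what each piece reproduces:
\begin{equation*}
\tau_{i+1/2,j}=v^{x}\,\ell_{1}(w)+\underline{D}_{1,1}\,\ell_{2}(w)+R\,\p_{x}w(x_{i+1/2},y_{j}).
\end{equation*}
Here the pieces are:
\begin{itemize}
\item $\ell_{1}(w):=w(x_{i+1/2},y_{j})-[\mal{H}_{x}w]_{i+1/2,j}-\f{h^{x}}{2}\,\mathrm{sgn}(v^{x})\,\p_{x}w(x_{i+1/2},y_{j})$, which annihilates $P_{1}$;
\item $\ell_{2}(w):=[d_{x}w]_{i+1/2,j}-\p_{x}w(x_{i+1/2},y_{j})$, which annihilates $P_{2}$;
\item $R:=\underline{D}_{1,1}-D+\f{|v^{x}|h^{x}}{2}=\f{|v^{x}|^{2}(h^{x})^{2}}{4D(1+a)}$.
\end{itemize}
On your cell $Q$, Bramble--Hilbert with scaling gives $|\ell_{1}(w)|\le K\bigl(h|w|_{H^{2}(Q)}+h^{2}|w|_{H^{3}(Q)}\bigr)$. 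It also gives $|\ell_{2}(w)|\le Kh|w|_{H^{3}(Q)}$, where the $1/h^{x}$ in the difference quotient costs one power. The last piece satisfies $|R\,\p_{x}w|\le Kh^{2}\|\nabla w\|_{L^{\infty}}\le Kh^{2}\|w\|_{H^{3}(\Omega)}$, because $H^{3}(\Omega)\subset C^{1}(\bar{\Omega})$ in two dimensions.

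Multiplying by $h^{x}h^{y}$ and summing over faces bounds the $x$-component error by $Kh^{2}\|w\|_{H^{3}(\Omega)}$, with $K$ depending on $\|\bm{v}\|_{L^{\infty}}$, $D_{*}$ and $D^{*}$. The full norm, not the seminorm, appears, but that is all the lemma asserts.
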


\subsection{First-order temporal scheme}
In this subsection, we present a linear first-order projection scheme for approximating the velocity $\bm{u}$, pressure $p$, and concentration $c$, which is both bound-preserving and mass-conserving for the concentration. The scheme combines the backward Euler (BE) method with an implicit-explicit (IMEX) treatment for time discretization, and employs the BCFD method together with the upwind technique for spatial discretization on staggered grids.

Let $P_{h}^{n}=\{P_{i,j}^{n}\}\in \mal{C}$, $ \bm{U}_{h}^{n}=\{U_{i+1/2,j}^{x,n}, U_{i,j+1/2}^{y,n}\}\in  \mal{U} \times \mal{V}$, and $C_{h}^{n}=\{C_{i,j}^{n}\}\in \mal{C}$ denote the finite difference approximations of the exact solutions $p(t^{n})$, $\bm{u}(t^{n})$, and $c(t^{n})$, respectively. Then, given the initial concentration $C_{h}^{0} =\widehat{C}_{h}^{0} =c^o\in \mal{C}$, the first-order prediction-then-projection BCFD scheme (abbreviated as BE-BCFD-P) for $n=1,\ldots,N_{t}$ is proposed as follows:

\medskip
\textbf{Step 1:} (Velocity \& pressure update). Given $C_{h}^{n-1} \in \mal{C}$, solve the velocity $\bm{U}_{h}^{n} \in \mal{U} \times \mal{V}$ and pressure $P_{h}^{n} \in \mal{C}$ through the linear algebraic system:
\begin{equation}\label{scm:be:u}
	\nabla_{h} \cdot \bm{U}_{h}^{n} = q^n, 
\quad 
	\bm{\mal{A}}(C_{h}^{n-1}) \bm{U}_{h}^{n} +  \nabla_{h} P_{h}^{n} = 0.
\end{equation}
 In addition, to ensure the uniqueness of the solution $P_{h}^{n}$, we enforce $P_{1,1}^{n}=0$.

\textbf{Step 2:} (Prediction of concentration). Solve the intermediate concentration $\widehat{C}_{h}^{n}$ via the BE-BCFD method
\begin{equation}\label{scm:be:c}
	\phi D_{\tau}\widehat{C}_{h}^{n} + \nabla_{h} \cdot \bigl( \bm{\mal{B}}(\widehat{C}_{h}^{n})\bm{U}_{h}^{n}- \bm{\mal{D}}(\bm{U}_{h}^{n}) \nabla_{h}\widehat{C}_{h}^{n} \bigr) - q_{P}\widehat{C}_{h}^{n} = \big[q_{I}c_{I}\big]^{n},
\end{equation}
 enclosed with homogeneous boundary conditions 
 $$ 
  (\bm{\mal{B}}(\widehat{C}_{h}^{n})\bm{U}_{h}^{n}- \bm{\mal{D}}(\bm{U}_{h}^{n}) \nabla_{h}\widehat{C}_{h}^{n}) \cdot \bm{n}=0.
 $$  

\textbf{Step 3:} (Correction of concentration). Project the nodal solution $\widehat{C}_{h}^{n}$ to the constrained manifold with mass conservation and bound preservation via the weighted $L^{2}$-projection such that
\begin{equation}\label{scm:be:min}
	\begin{aligned}
		&\quad \min_{C_{h}^{n} \in \mal{C}}~ \f{1}{2} \|C_{h}^{n} - \widehat{C}_{h}^{n}\|_{{\rm M},\beta^{n}}^{2}\\
		& \text{s.t.} \quad 0 \leq C_{h}^{n} \leq 1, \quad G(C_{h}^{n}):= \big( \beta^{n}C_{h}^{n},1 \big)_{\rm M} - \big( \beta^{n}\widehat{C}_{h}^{n},1 \big)_{\rm M}=0,
	\end{aligned}
\end{equation}
where $\beta^{n}:=\phi - \tau q_{P}^{n}$, and $G(C_{h}^{n})=0$ is used to enforce mass conservation.

Using the well-known Karush-Kuhn-Tucker (KKT) condition \cite{KKT08} for the constrained convex minimization problem, the solution to \eqref{scm:be:min} is equivalently characterized by
\begin{equation}\label{scm:be:k1}
	\beta^{n} C_{h}^{n} = \beta^{n} \widehat{C}_{h}^{n} + \lambda_h^{n} - \xi_h^{n} - \gamma^{n} \beta^{n},
\end{equation}
\begin{equation}\label{scm:be:k2}
	 G(C_{h}^{n})=0,
\end{equation}
\begin{equation}\label{scm:be:k3}
	 0 \leq C_{h}^{n} \leq 1, \quad \lambda_h^{n}C_{h}^{n} =0,\quad \xi_h^{n}\,(1-C_{h}^{n})=0, \quad \lambda_h^{n} \geq 0, \quad \xi_h^{n} \geq 0,
\end{equation}
where $\lambda_h^{n}$, $\xi_h^{n} \in \mal{C}$ are the Lagrange multipliers associated with the bound-preserving constraints, and $ \gamma^{n} \in \mathbb{R}$ is the Lagrange multiplier corresponding to the mass conservation constraint.

\begin{remark}\label{rem:mass:be} Taking the discrete inner product of \eqref{scm:be:c} with $1$, and applying Lemma \ref{lem:dis} together with the homogeneous boundary conditions, yields
\begin{equation*}\label{mass:be:1}
    \begin{aligned}
       \big( \phi \widehat{C}_{h}^{n} - \phi C_{h}^{n-1} ,1 \big)_{\rm M}  
       - \tau \big( q_{P}^{n}\widehat{C}_{h}^{n} , 1 \big)_{\rm M}
       = \tau \big( [q_{I}c_{I}]^{n}, 1 \big)_{\rm M}.
    \end{aligned}
\end{equation*}
Furthermore, combining this with \eqref{scm:be:k2} gives 
\begin{equation}\label{mass:be:2}
    \big( \phi  C_{h}^{n},  1 \big)_{\rm M}   - \big( \phi  C_{h}^{n-1},  1 \big)_{\rm M} 
    =
     \tau \big( [q_{I}c_{I}]^{n}, 1 \big)_{\rm M}+ \tau \big( [q_{P}C_{h}]^{n}, 1 \big)_{\rm M},
\end{equation}
which is precisely a discrete analogue of the mass balance identity \eqref{mod:mass_conservation}. This observation motivates the choice of the mass constraint condition $G(C_{h}^{n})=0$ in the correction step \eqref{scm:be:min}.
\end{remark}
\begin{remark}\label{rem:lm:be} In view of the KKT conditions \eqref{scm:be:k1}--\eqref{scm:be:k3}, the numerical concentration $C_{h}^{n}$ and the Lagrange multipliers $\lambda_h^{n}$ and $\xi_h^{n}$ admit the following explicit closed-form expressions:
\begin{equation}\label{be:lm}
	\big( C_{h}^{n}, \lambda_h^{n}, \xi_h^{n} \big) = \left\lbrace
	\begin{aligned}
		&\big( \widehat{C}_{h}^{n} - \gamma^{n}, 0, 0   \big),                     &  0 \leq\widehat{C}_{h}^{n} - \gamma^{n} \leq 1,\\
		&\big( 0, -\beta^{n} \big(\widehat{C}_{h}^{n} - \gamma^{n}\big), 0  \big), &  \widehat{C}_{h}^{n} - \gamma^{n} < 0,\\
		&\big( 1, 0, \beta^{n} \big(\widehat{C}_{h}^{n} - \gamma^{n}-1\big) \big), &  \widehat{C}_{h}^{n} - \gamma^{n} > 1.
	\end{aligned}\right. 
\end{equation}
Meanwhile, the Lagrange multiplier $\gamma^{n} \in \mathbb{R}$ is determined by the mass conservation constraint \eqref{scm:be:k2}; specifically, it is given as the solution to the nonlinear algebraic equation $F(\gamma)=0$, where
\begin{equation}\label{F:be}
    F(\gamma):=\sum_{0 \leq \widehat{C}_{i,j}^{n} - \gamma^{n} \leq 1} h^{x}h^{y} \beta_{i,j}^{n}\big(\widehat{C}_{i,j}^{n}-\gamma \big)
       + \sum_{\widehat{C}_{i,j}^{n} - \gamma^{n} > 1} h^{x}h^{y} \beta_{i,j}^{n} 
       - \big( \beta^{n}\widehat{C}_{h}^{n},1 \big)_{\rm M}.
\end{equation}
We note that the cases $C_{h} \equiv 1$ and $C_{h} \equiv 0$ correspond to two extreme states of the injected fluid: the former indicates that the injected fluid completely displaces the resident fluid, whereas the latter means that no injection has occurred. Excluding these two extreme cases, the mass conservation constraint \eqref{scm:be:k2} implies
$$
  0 \le \big( \beta^{n}\widehat{C}_{h}^{n},1 \big)_{\rm M} =  \big( \beta^{n}C_{h}^{n},1 \big)_{\rm M} 
  \le \big( \beta^{n},1 \big)_{\rm M}. 
$$
If $\widehat{C}_{h}^{n} - \gamma^{n} \leq 0$ everywhere, i.e., $\gamma^n \geq \gamma_{\text{max}}^{n}= \max_{i,j}\{\widehat{C}_{i,j}^{n}\}$, then from \eqref{F:be} we obtain
$$
F(\gamma^{n}) = F(\gamma_{\text{max}}^{n}) = - \big( \beta^{n}\widehat{C}_{h}^{n},1 \big)_{\rm M} \le 0.
$$ 
If $\widehat{C}_{h}^{n} - \gamma^{n} \geq 1$ everywhere, i.e., $\gamma^{n} \leq \gamma_{\text{min}}^{n}:= \min_{i,j}\{\widehat{C}_{i,j}^{n}-1\}$, then 
$$
F(\gamma^{n}) = F(\gamma_{\text{min}}^{n}) =  \big( \beta^{n},1 \big)_{\rm M} -  \big( \beta^{n}\widehat{C}_{h}^{n},1 \big)_{\rm M} \ge 0.
$$
Otherwise, for $\gamma^n \in [\gamma_{\text{min}}^{n},\gamma_{\text{max}}^{n}]$, it follows from the above discussion that $F(\gamma)$ is strictly monotonically decreasing and satisfies
$$
  F(\gamma_{\text{max}}^{n}) \le F(\gamma^{n}) \le F(\gamma_{\text{min}}^{n}).
$$
Therefore, the nonlinear equation \eqref{F:be} admits a unique solution $\gamma_{*}^{n}\in [\gamma_{\text{min}}^{n},\gamma_{\text{max}}^{n}]$.
\end{remark}
\begin{remark} In practice, the objective function in the projection step \eqref{scm:be:min} can be simply taken as the standard $\f{1}{2}\|C_{h}^{n}-\widehat{C}_{h}^{n}\|_{\rm M}^{2}$ instead of $\f{1}{2} \|C_{h}^{n} - \widehat{C}_{h}^{n}\|_{{\rm M},\beta^{n}}^{2}$, owing to the equivalence of these two norms. Nevertheless, the latter form is preferred for the convenience of numerical analysis. 
\end{remark}
\subsection{Second-order temporal scheme}
In the previous subsection, we presented a first‑order algorithm that preserves both the solution bounds and the mass balance identity for the concentration. To achieve higher‑order temporal accuracy for practical simulations, we now propose an alternative second‑order projection algorithm based on the Crank-Nicolson (CN) time discretization and the IMEX approach, while still preserving the desired physical properties.

For $n=1,\ldots,N_{t}$, the second-order prediction-then-projection BCFD scheme (abbreviated as CN-BCFD-P) is implemented as follows: \\

\textbf{Step 1:} (Prediction of concentration). The intermediate concentration $\widehat{C}_{h}^{n}$ is first computed via the second-order CN-BCFD method
\begin{equation}\label{scm:cn:c}
	\begin{aligned}
		\phi D_{\tau}\widehat{C}_{h}^{n} & + \f{1}{2}\nabla_{h} \cdot \big( \big[ \bm{\mal{B}}(\widehat{C}_{h}^{n}) + \bm{\mal{B}}(C_{h}^{n-1}) \big]\widetilde{\bm{U}}_{h}^{n-1/2}  - \bm{\mal{D}}(\widetilde{\bm{U}}_{h}^{n-1/2}) \nabla_{h}\big[ \widehat{C}_{h}^{n} + C_{h}^{n-1}\big]  \big)  \\
		& - \f{1}{2} \big( \big[ q_{P} \widehat{C}_{h} \big]^{n} + \big[ q_{P}C_{h} \big]^{n-1} \big) = \big[q_{I}c_{I}\big]^{n-1/2},
	\end{aligned}
\end{equation}
enclosed with homogeneous boundary conditions 
$$ 
\big( \big[ \bm{\mal{B}}(\widehat{C}_{h}^{n}) + \bm{\mal{B}}(C_{h}^{n-1}) \big]\widetilde{\bm{U}}_{h}^{n-1/2}  - \bm{\mal{D}}(\widetilde{\bm{U}}_{h}^{n-1/2}) \nabla_{h}\big[ \widehat{C}_{h}^{n} + C_{h}^{n-1}\big]  \big) \cdot \bm{n}=0.
$$
Here
$$
\widetilde{\bm{U}}_{h}^{n-1/2}=
\left\{
\begin{aligned}
    & \bm{U}_{h}^{0},\quad &n=1,\\
    &  \f{3\bm{U}_{h}^{n-1} - \bm{U}_{h}^{n-2}}{2}, \quad & n \geq 2,
\end{aligned}
\right.
$$
and the initial velocity $\bm{U}_{h}^{0}$ is obtained by solving the linear system in \eqref{scm:cn:u} with $n=0$.

\textbf{Step 2:} (Correction of concentration). Project the nodal solution $\widehat{C}_{h}^{n}$ to the constrained manifold with mass conservation and bound preservation via the weighted $L^{2}$-projection such that 
\begin{equation}\label{scm:cn:min}
	\begin{aligned}
		&\quad \min_{C_{h}^{n} \in \mal{C}}~ \f{1}{2} \|C_{h}^{n} - \widehat{C}_{h}^{n}\|_{{\rm M},\bar{\beta}^{n}}^{2} \\
		& \text{s.t.} \quad 0 \leq C_{h}^{n} \leq 1, \quad \bar{G}(C_{h}^{n}):= \big( \bar{\beta}^{n}C_{h}^{n},1 \big)_{\rm M} - \big( \bar{\beta}^{n}\widehat{C}_{h}^{n},1 \big)_{\rm M}=0,
	\end{aligned}
\end{equation}
where $\bar{\beta}^{n}:= \phi - \tau q_{P}^{n}/2$, and $\bar{G}(C_{h}^{n})=0$ is used to enforce the mass conservation. 

\textbf{Step 3:} (Velocity \& pressure update). Given $C_{h}^{n}\in \mal{C}$, solve the velocity $\bm{U}_{h}^{n} \in \mal{U} \times \mal{V}$ and pressure $P_{h}^{n} \in \mal{C}$ via the linear algebraic system 
\begin{equation}\label{scm:cn:u}
	\nabla_{h} \cdot \bm{U}_{h}^{n} = q^n, 
\quad 
	\bm{\mal{A}}(C_{h}^{n}) \bm{U}_{h}^{n} +  \nabla_{h} P_{h}^{n} = 0.
\end{equation}
 In addition, to ensure the uniqueness of the solution $P_{h}^{n}$, we enforce $P_{1,1}^{n}=0$.
\begin{remark}\label{rem:extrap} It is worth noting that the implementation sequence of the proposed second-order projection scheme differs from that of the first-order one, in which different treatments of the concentration variable are adopted for the velocity and pressure updates; see \eqref{scm:be:u} and \eqref{scm:cn:u} for comparison. If we were to solve the velocity-pressure system first, as in the first-order approach, a second-order extrapolation $\tilde{C}_{h}^{n}=2C_{h}^{n-1} - C_{h}^{n-2}$ would be required to approximate the concentration at time $t=t^{n}$. Although the previous time-level concentrations satisfy the bounds, i.e., $C_{h}^{n-1}, C_{h}^{n-2} \in [0,1]$, the extrapolated value $\tilde{C}_{h}^{n}$ may fall outside this interval. Consequently, the quantity $\mal{M}^{1/4}\tilde{C}_{h}^{n} + 1- \tilde{C}_{h}^{n}$ could become zero or sufficiently small, which renders \eqref{mod:mu} ill-posed and may lead to serious numerical blow-up. This issue will be illustrated through numerical experiments  in Section \ref{sec:num}. To address this issue, the proposed second-order scheme reverses the usual implementation sequence by computing the concentration prior to the updates of velocity and pressure.
\end{remark}
\begin{remark}\label{rem:mass:cn} Analogous to Remark \ref{rem:mass:be}, taking the discrete inner product of \eqref{scm:cn:c} with $1$, and applying Lemma \ref{lem:dis} together with the homogeneous boundary conditions, yields
\begin{equation*}\label{mass:cn:1}
    \begin{aligned}
       \big( \phi \widehat{C}_{h}^{n} - \phi C_{h}^{n-1} ,1 \big)_{\rm M} 
       - \f{\tau}{2} \big( [q_{P}\widehat{C}_{h}]^{n} + [q_{P}C_{h}]^{n-1} , 1 \big)_{\rm M}
       = \tau \big( [q_{I}c_{I}]^{n-1/2}, 1 \big)_{\rm M}.
    \end{aligned}
\end{equation*}
Combining this with \eqref{scm:cn:k2} implies that 
\begin{equation*}\label{mass:cn:2}
    \big( \phi  C_{h}^{n},  1 \big)_{\rm M}   - \big( \phi  C_{h}^{n-1},  1 \big)_{\rm M} 
    =
     \tau \big( [q_{I}c_{I}]^{n-1/2}, 1 \big)_{\rm M}+ \f{\tau}{2} \big( [q_{P}C_{h}]^{n}+[q_{P}C_{h}]^{n-1}, 1 \big)_{\rm M}.
\end{equation*}
This identity provides a second-order discrete analogue of the mass balance identity \eqref{mod:mass_conservation}, which motivates us to impose the mass constraint $\bar{G}(C_{h}^{n})=0$ in the correction step \eqref{scm:cn:min}.
\end{remark}
\begin{remark} The convex optimization problem \eqref{scm:cn:min} is equivalent to the following KKT conditions: 
\begin{equation}\label{scm:cn:k1}
	 \bar{\beta}^{n}C_{h}^{n} =\bar{\beta}^{n} \widehat{C}_{h}^{n} + \lambda_h^{n} - \xi_h^{n} - \gamma^{n} \bar{\beta}^{n},
\end{equation}
\begin{equation}\label{scm:cn:k2}
	\bar{G}(C_{h}^{n})=0,
\end{equation}
\begin{equation}\label{scm:cn:k3}
	 0 \leq C_{h}^{n} \leq 1, \quad \lambda_h^{n}C_{h}^{n} =0,\quad \xi_h^{n}\,(1-C_{h}^{n})=0, \quad \lambda_h^{n} \geq 0, \quad \xi_h^{n} \geq 0,
\end{equation}
where $\lambda_h^{n}$, $\xi_h^{n} \in \mal{C}$ and $ \gamma^{n} \in \mathbb{R}$ are the corresponding Lagrange multipliers as the first-order scheme. The detailed implementation of the KKT conditions \eqref{scm:cn:k1}--\eqref{scm:cn:k3}, is similar to that of the first‑order scheme; the specific details are provided in Remark \ref{rem:lm:be}. In addition to the weighted $L^{2}$-projection used in the correction step of the first- and second-order schemes (see \eqref{scm:be:min} and  \eqref{scm:cn:min}),  other projection types can also be employed \cite{CYY24}, for example, the weighted $H^{1}$-projection (see \ref{App:A} for details). The corresponding first- and second-order projection schemes are referred to as BE‑BCFD‑P2 and CN‑BCFD‑P2, respectively.
\end{remark}

\section{Numerical analysis} \label{sec:error_analysis}
In this section, we first establish the numerical analysis for the BE-BCFD-P scheme \eqref{scm:be:u}--\eqref{scm:be:min}, including its unique solvability, optimal-order error estimate, and stability with respect to the initial concentration and source term. The corresponding analysis for the CN-BCFD-P scheme \eqref{scm:cn:c}--\eqref{scm:cn:u} is then briefly outlined.

\subsection{Analysis for the first-order scheme}\label{subsec:1stAnal}
Regarding the discrete weighted $L^2$-projection, we have the following two conclusions.
\begin{lemma}[Orthogonality of the weighted $L^2$-projection]\label{lem:lm:be}
The three Lagrange multipliers $\lambda_h^{n}$, $\xi_h^{n} \in \mal{C}$ and $\gamma^{n} \in \mathbb{R}$ in \eqref{scm:be:k1}--\eqref{scm:be:k3} satisfy
$$
\big( \lambda_h^{n} - \xi_h^{n} - \gamma^{n} \beta^{n}, 1 \big)_{\rm M} = 0,\quad n = 1,\ldots,N_{t}.
$$
\end{lemma}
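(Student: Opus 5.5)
This is essentially immediate from the KKT system. The plan is to take the discrete inner product $(\cdot,1)_{\rm M}$ of the stationarity condition \eqref{scm:be:k1} and then apply the mass constraint \eqref{scm:be:k2}.

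Rearranging \eqref{scm:be:k1} gives the pointwise identity on the grid
$$
\lambda_h^{n} - \xi_h^{n} - \gamma^{n}\beta^{n} = \beta^{n}\big(C_{h}^{n} - \widehat{C}_{h}^{n}\big).
$$
Pairing both sides with the constant grid function $1$ under $(\cdot,\cdot)_{\rm M}$, and using linearity of the discrete inner product, yields
$$
\big(\lambda_h^{n} - \xi_h^{n} - \gamma^{n}\beta^{n},1\big)_{\rm M} = \big(\beta^{n}C_{h}^{n},1\big)_{\rm M} - \big(\beta^{n}\widehat{C}_{h}^{n},1\big)_{\rm M} = G(C_{h}^{n}).
$$
By \eqref{scm:be:k2}, $G(C_{h}^{n})=0$, which gives the claim for each $n=1,\ldots,N_t$.

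There is no real obstacle. The only point to check is that the multipliers exist, so that \eqref{scm:be:k1}--\eqref{scm:be:k3} actually characterize the minimizer of \eqref{scm:be:min}. This follows from Remark \ref{rem:lm:be}. There, the equation $F(\gamma)=0$ has a unique root $\gamma^n_*$, and \eqref{be:lm} then defines $\lambda_h^n,\xi_h^n$ explicitly. The constraints in \eqref{scm:be:min} are affine, and the objective is strictly convex because $\beta^n\ge\phi_*>0$ (recall $q_P\le0$). Hence the KKT conditions are necessary and sufficient, and the identity holds for the multipliers produced by the scheme.
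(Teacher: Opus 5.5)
Your argument is correct and is exactly the paper's proof: pair \eqref{scm:be:k1} with $1$ in $(\cdot,\cdot)_{\rm M}$ and invoke the mass constraint \eqref{scm:be:k2}. Your closing paragraph on the existence of the multipliers is not needed for the identity, which holds for any triple satisfying \eqref{scm:be:k1}--\eqref{scm:be:k2}; moreover, that paragraph tacitly assumes the feasible set is nonempty, i.e.\ $0\le(\beta^{n}\widehat{C}_{h}^{n},1)_{\rm M}\le(\beta^{n},1)_{\rm M}$, which the paper likewise simply takes for granted in Remark \ref{rem:lm:be}.
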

\begin{proof} It follows from condition \eqref{scm:be:k1} that
\begin{equation*}
    \big( \beta^{n} C_{h}^{n},1  \big)_{\rm M} =\big( \beta^{n} \widehat{C}_{h}^{n},1  \big)_{\rm M} 
     + \big( \lambda_{h}^{n} - \xi_{h}^{n} - \gamma^{n} \beta^{n} , 1 \big)_{\rm M}.
\end{equation*}
Together with the mass conservation constraint \eqref{scm:be:k2}, the conclusion is obtained.
\end{proof}

\begin{lemma}[Stability of the weighted $L^2$-projection]\label{lem:hat:be}
For the intermediate concentration $\widehat{C}_{h}^{n}$ and the projected concentration $C_{h}^{n}$ in the BE-BCFD-P scheme \eqref{scm:be:u}--\eqref{scm:be:min}, it holds
$$
  \| C_{h}^{n} \|_{{\rm M},\beta^{n}}^{2} + \| C_{h}^{n}- \widehat{C}_{h}^{n} \|_{{\rm M},\beta^{n}}^{2} \leq \| \widehat{C}_{h}^{n} \|_{{\rm M},\beta^{n}}^{2},\quad n = 1,\ldots,N_{t}.
$$
\end{lemma}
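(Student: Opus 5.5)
We plan to derive the inequality from the standard variational characterization of a projection onto a closed convex set. The key point is that $0 \in \mal{C}$ belongs to the feasible set only if $(\beta^n \widehat{C}_h^n,1)_{\rm M}=0$, so we cannot simply compare with the zero function. Instead, we will work directly with the KKT system \eqref{scm:be:k1}--\eqref{scm:be:k3}. First we check that $\beta^n>0$, since $\phi\ge\phi_*>0$ and $q_P\le 0$. Consequently $\|\cdot\|_{{\rm M},\beta^n}$ is a genuine norm, and the algebraic identity
$$
\|\widehat{C}_h^n\|_{{\rm M},\beta^n}^2=\|C_h^n\|_{{\rm M},\beta^n}^2+\|C_h^n-\widehat{C}_h^n\|_{{\rm M},\beta^n}^2+2\big(\beta^n(\widehat{C}_h^n-C_h^n),C_h^n\big)_{\rm M}
$$
holds. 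The lemma therefore reduces to proving $\big(\beta^n(\widehat{C}_h^n-C_h^n),C_h^n\big)_{\rm M}\ge 0$.

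To establish this, we substitute \eqref{scm:be:k1}, which gives $\beta^n(\widehat{C}_h^n-C_h^n)=-\lambda_h^n+\xi_h^n+\gamma^n\beta^n$. The quantity to bound then becomes
$$
-(\lambda_h^n,C_h^n)_{\rm M}+(\xi_h^n,C_h^n)_{\rm M}+\gamma^n(\beta^n,C_h^n)_{\rm M}.
$$
The first term vanishes by complementarity \eqref{scm:be:k3}. For the second term, complementarity gives $\xi_h^nC_h^n=\xi_h^n$ pointwise, so it equals $(\xi_h^n,1)_{\rm M}\ge0$. The third term carries the sign of $\gamma^n$, which is the delicate point. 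Using Lemma \ref{lem:lm:be}, we get $\gamma^n(\beta^n,1)_{\rm M}=(\lambda_h^n-\xi_h^n,1)_{\rm M}$. This alone does not resolve the sign, so we instead combine the terms by cases on the sign of $\gamma^n$, using the closed form \eqref{be:lm}.

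If $\gamma^n\ge0$, all three contributions are nonnegative and we are done. If $\gamma^n<0$, we have $\widehat{C}_h^n-\gamma^n>\widehat{C}_h^n$. Here we rewrite the target as
$$
(\xi_h^n,1)_{\rm M}+\gamma^n(\beta^n,C_h^n)_{\rm M}=(\xi_h^n,1)_{\rm M}+\gamma^n(\beta^n,1)_{\rm M}-\gamma^n(\beta^n,1-C_h^n)_{\rm M}.
$$
Applying Lemma \ref{lem:lm:be}, this equals $(\lambda_h^n,1)_{\rm M}+|\gamma^n|(\beta^n,1-C_h^n)_{\rm M}$, which is nonnegative because $C_h^n\le1$ and $\lambda_h^n\ge0$. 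The two cases together give the sign condition, and hence the lemma. The main obstacle is exactly this sign issue for the mass multiplier, since $0$ need not be feasible. The trick is to compare against $0$ when $\gamma^n\ge0$ and against $1$ when $\gamma^n<0$, using Lemma \ref{lem:lm:be} to trade $\gamma^n$ for the bound multipliers.
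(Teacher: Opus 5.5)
Your argument is correct and is essentially the paper's proof. Both test the KKT relation \eqref{scm:be:k1} against $C_{h}^{n}$, use complementarity to simplify $(\lambda_h^{n},C_h^{n})_{\rm M}$ and $(\xi_h^{n},C_h^{n})_{\rm M}$, and invoke Lemma \ref{lem:lm:be} to handle the sign of the $\gamma^{n}$-term. The only difference is that the paper avoids your case split: it sets $\varpi^{n}=(\beta^{n},C_{h}^{n})_{\rm M}/(\beta^{n},1)_{\rm M}\in[0,1]$, so that $\gamma^{n}(\beta^{n},C_{h}^{n})_{\rm M}=\varpi^{n}(\lambda_h^{n}-\xi_h^{n},1)_{\rm M}$, and the quantity you bound becomes $\varpi^{n}(\lambda_h^{n},1)_{\rm M}+(1-\varpi^{n})(\xi_h^{n},1)_{\rm M}\ge 0$ in one formula, which coincides with both of your case expressions.
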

\begin{proof}
Taking the discrete inner product of \eqref{scm:be:k1} with $ C_{h}^{n}$, we obtain
\begin{equation}\label{cd:hat:1}
	\f{\| C_{h}^{n} \|_{{\rm M},\beta^{n}}^{2} + \| C_{h}^{n}- \widehat{C}_{h}^{n} \|_{{\rm M},\beta^{n}}^{2}      - \| \widehat{C}_{h}^{n} \|_{{\rm M},\beta^{n}}^{2} }{2}
      = \big( \lambda_h^{n} - \xi_h^{n} - \gamma^{n} \beta^{n}, C_{h}^{n} \big)_{\rm M}. 
\end{equation} 
Define $\varpi^{n} := \big( \beta^{n}, C_{h}^{n} \big)_{\rm M}/\big( \beta^{n}, 1 \big)_{\rm M}$. Then, by Lemma \ref{lem:lm:be}, together with $\beta^{n} >0$ and $C_{h}^{n} \in [0,1]$, we have
\begin{equation}\label{cd:hat:2}
 \gamma^{n} \big( \beta^{n}, C_{h}^{n} \big)_{\rm M} = \varpi^{n} \big( \lambda_{h}^{n} - \xi_{h}^{n},  1 \big)_{\rm M} \quad \text{and}\quad \varpi^{n} \in [0,1]. 
\end{equation}
Furthermore, combining the KKT condition \eqref{scm:be:k3} with \eqref{cd:hat:2} yields
\begin{equation}\label{cd:hat:3}
	\begin{aligned}
		\big( \lambda_{h}^{n} - \xi_{h}^{n} - \gamma^{n} \beta^{n},  C_{h}^{n} \big)_{\rm M}
		&= \big( \lambda_h^{n}, C_{h}^{n} \big)_{\rm M} + \big( \xi_h^{n}, 1-C_{h}^{n}  \big)_{\rm M} 
         -\varpi^{n} \big( \lambda_h^{n},  1 \big)_{\rm M} + (\varpi^{n} - 1 )\big( \xi_h^{n}, 1 \big)_{\rm M}\\
		& = -\varpi^{n} \big( \lambda_h^{n},  1  \big)_{\rm M} + (\varpi^{n} - 1 )\big( \xi_h^{n}, 1 \big)_{\rm M} \le 0.
	\end{aligned}
\end{equation}
The non-positivity in the last inequality follows from the facts that $\varpi^{n} \in [0,1]$, $\lambda_h^{n} \geq 0$, and $\xi_h^{n} \geq 0$.

Finally, inserting \eqref{cd:hat:3} into \eqref{cd:hat:1} gives the desired conclusion.
\end{proof}

The following lemma plays a key role in the error analysis of the projection scheme.
\begin{lemma}\label{lem:hatec:be} 
   Let $\widehat{C}_{h}^{n}$ and $C_{h}^{n}$ be the intermediate and projected concentration solutions of the BE-BCFD-P scheme \eqref{scm:be:u}--\eqref{scm:be:min}. Suppose $w_{h}^{n} \in \mal{C}$ satisfies
  \begin{equation}\label{bound_mass:e1}
    0 \leq w_{h}^{n} \leq 1, \quad \big(\beta^{n} w_{h}^{n},1 \big)_{\rm M}
    =\big(\beta^{n} \widehat{C}_{h}^{n},1 \big)_{\rm M}.
  \end{equation}
   Then, for $n=1,\ldots,N_{t}$, the following estimates hold:
	$$
	\|w_{h}^{n} - C_{h}^{n}\|_{{\rm M},\beta^{n}}^{2} 
      + \|(w_{h}^{n} - C_{h}^{n})-(w_{h}^{n} - \widehat{C}_{h}^{n})\|_{{\rm M},\beta^{n}}^{2} 
      \leq \|w_{h}^{n} - \widehat{C}_{h}^{n}\|_{{\rm M},\beta^{n}}^{2},
	$$	
    $$
    \big( \beta^{n}\big(w_{h}^{n} - C_{h}^{n}\big)-\beta^{n}\big(w_{h}^{n} - \widehat{C}_{h}^{n}\big), w_{h}^{n} - C_{h}^{n} \big)_{\rm M} \leq 0.
    $$
\end{lemma}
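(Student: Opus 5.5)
The plan is to follow the proof of Lemma \ref{lem:hat:be}, but to test the KKT identity \eqref{scm:be:k1} against $w_{h}^{n}-C_{h}^{n}$ instead of $C_{h}^{n}$. It is cleanest to prove the second inequality first and then derive the first from it by a polarization identity.

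By \eqref{scm:be:k1},
$$
\beta^{n}\big(w_{h}^{n}-C_{h}^{n}\big)-\beta^{n}\big(w_{h}^{n}-\widehat{C}_{h}^{n}\big)
=\beta^{n}\big(\widehat{C}_{h}^{n}-C_{h}^{n}\big)
=-\big(\lambda_h^{n}-\xi_h^{n}-\gamma^{n}\beta^{n}\big).
$$
So the second claim is equivalent to
$$
\big(\lambda_h^{n}-\xi_h^{n}-\gamma^{n}\beta^{n},\,w_{h}^{n}-C_{h}^{n}\big)_{\rm M}\ge 0 .
$$
I would split this pairing into three pieces.

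\emph{The $\gamma^{n}$ term.} Assumption \eqref{bound_mass:e1} combined with the mass constraint \eqref{scm:be:k2} gives $(\beta^{n}(w_{h}^{n}-C_{h}^{n}),1)_{\rm M}=0$. Since $\gamma^{n}$ is a constant, the term $-\gamma^{n}(\beta^{n},w_{h}^{n}-C_{h}^{n})_{\rm M}$ vanishes. This is where the weighted mass constraint is essential, and I expect it to be the only delicate point. Notably, there is no need for the $\varpi^{n}$ device used in Lemma \ref{lem:hat:be}.

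\emph{The $\lambda_h^{n}$ term.} Complementary slackness \eqref{scm:be:k3} gives $\lambda_h^{n}C_{h}^{n}=0$. Hence $(\lambda_h^{n},w_{h}^{n}-C_{h}^{n})_{\rm M}=(\lambda_h^{n},w_{h}^{n})_{\rm M}\ge0$, since $\lambda_h^{n}\ge0$ and $w_{h}^{n}\ge0$.

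\emph{The $\xi_h^{n}$ term.} Complementary slackness gives $\xi_h^{n}(1-C_{h}^{n})=0$. Hence $-(\xi_h^{n},w_{h}^{n}-C_{h}^{n})_{\rm M}=(\xi_h^{n},1-w_{h}^{n})_{\rm M}\ge0$, since $\xi_h^{n}\ge0$ and $w_{h}^{n}\le1$.

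Summing the three pieces yields the second inequality.

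For the first inequality, set $a=w_{h}^{n}-C_{h}^{n}$ and $b=w_{h}^{n}-\widehat{C}_{h}^{n}$. Apply the polarization identity in the $\beta^{n}$-weighted inner product (valid because $\beta^{n}=\phi-\tau q_{P}^{n}>0$):
$$
2\big(\beta^{n}(a-b),a\big)_{\rm M}=\|a\|_{{\rm M},\beta^{n}}^{2}+\|a-b\|_{{\rm M},\beta^{n}}^{2}-\|b\|_{{\rm M},\beta^{n}}^{2}.
$$
The left-hand side is $\le0$ by the second inequality, and this is exactly the first claimed estimate.
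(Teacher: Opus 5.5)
Your proof is correct and is essentially the paper's argument. The paper likewise tests \eqref{scm:be:k1} against $w_{h}^{n}-C_{h}^{n}$, kills the $\gamma^{n}$ term via \eqref{scm:be:k2} and \eqref{bound_mass:e1}, handles the $\lambda_h^{n}$ and $\xi_h^{n}$ terms by complementary slackness together with $0\le w_{h}^{n}\le 1$, and reads both estimates off the same polarization identity; the only difference is presentational (you prove the second inequality first and deduce the first, whereas the paper writes everything as one chain of equalities).
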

\begin{proof} It follows from the KKT condition \eqref{scm:be:k1} that
	\begin{equation*}\label{hatec:1}
		\beta^{n}\big(w_{h}^{n} - C_{h}^{n}\big)-\beta^{n}\big(w_{h}^{n} - \widehat{C}_{h}^{n}\big) = - \lambda_h^{n} + \xi_h^{n} + \gamma^{n} \beta^{n}.
	\end{equation*}
	Then, taking the discrete inner product with $w_{h}^{n} - C_{h}^{n}$ yields
	\begin{equation*}
		\begin{aligned}
		  & \f{1}{2} \Big( \|w_{h}^{n} - C_{h}^{n}\|_{{\rm M},\beta^{n}}^{2} + \| (w_{h}^{n} - C_{h}^{n})-(w_{h}^{n} - \widehat{C}_{h}^{n}) \|_{{\rm M},\beta^{n}}^{2} - \|w_{h}^{n} - \widehat{C}_{h}^{n}\|_{{\rm M},\beta^{n}}^{2} \Big) \\
        & \quad = \big( \beta^{n}\big(w_{h}^{n} - C_{h}^{n}\big)-\beta^{n}\big(w_{h}^{n} - \widehat{C}_{h}^{n}\big), w_{h}^{n} - C_{h}^{n} \big)_{\rm M} \\
		& \quad = \big( - \lambda_h^{n} + \xi_h^{n} + \gamma^{n} \beta^{n}, w_{h}^{n} - C_{h}^{n}  \big)_{\rm M}\\
		& \quad = - \big( \lambda_h^{n}, w_{h}^{n} \big)_{\rm M} + \big( \lambda_h^{n}, C_{h}^{n} \big)_{\rm M} + \big( \xi_h^{n}, w_{h}^{n} - 1 \big)_{\rm M} + \big( \xi_h^{n}, 1-C_{h}^{n} \big)_{\rm M} \\
		& \quad = - \big( \lambda_h^{n}, w_{h}^{n} \big)_{\rm M} + \big( \xi_h^{n}, w_{h}^{n} - 1 \big)_{\rm M} \leq 0,
		\end{aligned}
	\end{equation*}
	where, the KKT conditions \eqref{scm:be:k2}--\eqref{scm:be:k3} and assumption \eqref{bound_mass:e1} have been invoked.
\end{proof}
Define the errors $e_{c}^{n}:=c^{n}-C_{h}^{n}$, $\widehat{e}_{c}^{n}:=c^{n}-\widehat{C}_{h}^{n}$, $e_{p}^{n}:=p^{n}-P_{h}^{n}$ and $\bm{e}_{\bm{u}}^{n}:=(e_{u^{x}}^{n},e_{u^{y}}^{n})^{\top} =\bm{u}^{n}-\bm{U}_{h}^{n}$. It is clear that $e_{c}^{0}=\widehat{e}_{c}^{0}=0$. For numerical analysis, we introduce two "biased" error functions as follows:
$$
e_{\eta}^{n}:= c_{\eta}^{n} - C_{h}^{n}, \quad \widehat{e}_{\eta}^{n}:= c_{\eta}^{n} - \widehat{C}_{h}^{n}, 
$$
where the "biased" auxiliary concentration solution is defined as
\begin{equation}\label{c:eta:be}
    c_{\eta}^{n}: = \big( 1 + \alpha^{n} \big) c^{n}, 
    \quad 
    \alpha^{n}:= \varepsilon^{n} \chi(c^{n}),
    \quad
    \varepsilon^{n}:=\f{ \big(\beta^{n} C_{h}^{n},1 \big)_{\rm M} - \big(\beta^{n} c^{n},1 \big)_{\rm M} }{\big(\beta^{n} c^{n},\chi(c^{n}) \big)_{\rm M}}.
\end{equation}
Here, $\chi(w)$ is chosen as a smooth cut-off function (cf. \cite{XJ22,ZBY23,ZBY24}). For instance, we can take
\begin{equation*}
     \chi(w):= \left\lbrace
     \begin{aligned}
         & 1, & w \in [0,1/2-\eta), \\
         & 2(1-\eta-w), & w \in [1/2-\eta,1-\eta], \\
         & 0, & w \in ( 1-\eta,1],
     \end{aligned} \right.
 \end{equation*}
where $\eta<1/2$ is a given relatively small positive constant.  It is straightforward to verify that $\chi(w)\in [0,1]$ for all $w\in [0,1]$, and that $\chi$ is locally Lipschitz continuous with
\begin{equation}\label{chi:L}
    |\chi(w_{1}) - \chi(w_{2})| \le 2 |w_{1} -w_{2}|, \quad \forall w_{1}, w_{2} \in [0,1].
\end{equation}
Moreover, apart from the the trivial cases $c \equiv 1$ and $c \equiv 0$ (i.e., the fluid in the porous medium is either pure water or pure oil), we have $(\beta^{n} c^{n},\chi(c^{n}) )_{\rm M}> 0$. Therefore, we assume that there exists a positive constant $K_{*}$ such that for sufficiently small grid size $h$, $(\beta^{n} c^{n},\chi(c^{n}) )_{\rm M} \geq K_{*} > 0$. 
\begin{remark}\label{rem:hatec:be} By definition \eqref{c:eta:be} and the discrete mass conservation constraint \eqref{scm:be:k2}, it is easy to check that
$$
  \big( \beta^{n}c_{\eta}^{n}, 1 \big)_{\rm M} 
= \big( \beta^{n}c^{n}, 1 \big)_{\rm M} 
   + \f{ \big(\beta^{n} C_{h}^{n},1 \big)_{\rm M} - \big(\beta^{n} c^{n},1 \big)_{\rm M} }{ \big(\beta^{n} c^{n},\chi(c^{n}) \big)_{\rm M} } \big(\beta^{n} c^{n},\chi(c^{n}) \big)_{\rm M} 
= \big(\beta^{n} C_{h}^{n},1 \big)_{\rm M}
=\big(\beta^{n} \widehat{C}_{h}^{n},1 \big)_{\rm M}.
$$
Furthermore, from the definition of the smooth cut-off function $\chi(w)$, if $|\varepsilon^{n}| \leq \eta < 1/2$ (to be verified later), we have
$$
 0 \leq \frac{c^{n}}{2}
  \leq \big( 1 - \eta \chi(c^{n}) \big)c^{n}
  \leq c_{\eta}^{n} 
   \leq \big( 1 + \eta \chi(c^{n}) \big)c^{n} 
= 1 - \eta^{2} < 1, \quad \text{for}~c^{n} \in [0,1-\eta],
$$
and
$ c_{\eta}^{n} = c^{n} \in (1-\eta,1]$  for $c^{n} \in (1-\eta,1]$.
Hence, the auxiliary solution $c_{\eta}^{n}$ fulfills the two constraint conditions required in \eqref{bound_mass:e1}, i.e., 
\begin{equation*}
       0 \le c_{\eta}^{n} \le 1,\quad \big( \beta^{n}c_{\eta}^{n}, 1 \big)_{\rm M} =\big(\beta^{n} \widehat{C}_{h}^{n},1 \big)_{\rm M},
\end{equation*}
and therefore, Lemma \ref{lem:hatec:be} implies the following
projection error estimates:
\begin{equation}\label{aux:proj_err:be}
       \|e_{\eta}^{n}\|_{{\rm M},\beta^{n}}^{2} 
       + \|e_{\eta}^{n}-\widehat{e}_{\eta}^{n}\|_{{\rm M},\beta^{n}}^{2} 
       \leq \|\widehat{e}_{\eta}^{n}\|_{{\rm M},\beta^{n}}^{2},
       \quad 
     \big( \beta^{n} \big(e_{\eta}^{n}- \widehat{e}_{\eta}^{n}\big), e_{\eta}^{n} \big)_{\rm M} \leq 0,
\end{equation}
which are similar to the stability estimate in Lemma~\ref{lem:hat:be}.
\end{remark}

Next, we establish the following main lemma for the perturbation factor $\varepsilon^{n}$, which will serve as another key ingredient in the subsequent error analysis. 
\begin{lemma}\label{lem:alpha:be}
Under assumptions (A1)--(A2), there exists a positive constant $K$, independent of $h$ and $\tau$, such that for $n=1,\ldots,N_{t}$, the following estimates hold: 
$$
\begin{aligned}
    &|\varepsilon^{n}|_  \leq K\|e_{c}^{n-1}\|_{{\rm M},\phi} + K\big( \tau^{2} + h^{2} \big),\\
    &|\varepsilon^{n}|^{2} \leq K \tau \sum_{\ell=1}^{n-1} \|e_{c}^{\ell}\|_{{\rm M},\phi}^{2} + K\big( \tau^{2} + h^{4} \big),\\
    &|d_{\tau} \varepsilon^{n}| \leq K\|e_{c}^{n-1}\|_{{\rm M},\phi} + K\big( \tau + h^{2} \big).
\end{aligned}
$$
\end{lemma}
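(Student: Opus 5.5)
Since the denominator satisfies $(\beta^n c^n,\chi(c^n))_{\rm M}\ge K_*>0$, bounding $\varepsilon^n$ reduces to bounding the numerator $N^n:=(\beta^n C_h^n,1)_{\rm M}-(\beta^n c^n,1)_{\rm M}$. The idea is to compare the discrete mass balance with a quadrature of the continuous one. By \eqref{scm:be:k2} and Remark \ref{rem:mass:be},
\[
(\beta^n C_h^n,1)_{\rm M}=(\beta^n\widehat C_h^n,1)_{\rm M}=(\phi C_h^{n-1},1)_{\rm M}+\tau([q_Ic_I]^n,1)_{\rm M}.
\]
Integrating \eqref{mod} over $\Omega$ on $[t^{n-1},t^n]$ and using the no-flow conditions \eqref{mod:in} gives the exact counterpart
\[
(\beta^n c^n,1)_{\rm M}=(\phi c^{n-1},1)_{\rm M}+\tau([q_Ic_I]^n,1)_{\rm M}+R^n .
\]
Here $R^n$ collects two errors: the midpoint-rule (grid) error of replacing $\int_\Omega$ by $(\cdot,1)_{\rm M}$, which is $O(h^2)$ per unit time and hence $O(\tau h^2)$ after integration in time; and the backward-Euler rectangle error $\int_{t^{n-1}}^{t^n}(f(t)-f(t^n))\,dt=O(\tau^2)$ for $f=\phi\partial_t c$ and $q_P c$, using $c\in W^{1,\infty}$, $c\in H^3(0,T;L^\infty)$ and (A1). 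The divergence term integrates to zero exactly because of the boundary condition. Subtracting the two identities gives the recursion
\[
N^n=-(\phi e_c^{n-1},1)_{\rm M}-R^n,\qquad |R^n|\le K\tau(\tau+h^2).
\]

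For the first estimate, Cauchy--Schwarz gives $|(\phi e_c^{n-1},1)_{\rm M}|\le K\|e_c^{n-1}\|_{{\rm M},\phi}$, so $|\varepsilon^n|\le K\|e_c^{n-1}\|_{{\rm M},\phi}+K\tau(\tau+h^2)$, which is stronger than the stated bound.

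The second estimate needs a better bound on the mass defect $m^{k}:=(\phi e_c^{k},1)_{\rm M}$. Here I would use the recursion itself rather than crude Cauchy--Schwarz. Writing $\beta^k=\phi-\tau q_P^k$, we have $m^k=-N^k-\tau(q_P^k e_c^k,1)_{\rm M}$, so
\[
m^{k}=m^{k-1}+R^k-\tau(q_P^k e_c^k,1)_{\rm M}.
\]
Summing from $k=1$ with $m^0=0$ yields
\[
|m^{n-1}|\le \sum_{k\le n-1}|R^k|+K\tau\sum_{k\le n-1}\|e_c^k\|_{{\rm M},\phi}.
\]
The first sum is at most $KT(\tau+h^2)$. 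Squaring and applying Cauchy--Schwarz in the sum, $(\tau\sum_k a_k)^2\le T\tau\sum_k a_k^2$, gives
\[
|\varepsilon^n|^2\le K\tau\sum_{\ell=1}^{n-1}\|e_c^\ell\|^2_{{\rm M},\phi}+K(\tau^2+h^4).
\]
One should double check that the accumulated $R^k$ is indeed $O(\tau+h^2)$; this telescoping step is the main point of the argument.

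For $d_\tau\varepsilon^n$, write $\varepsilon^n=N^n/Q^n$ with $Q^n=(\beta^n c^n,\chi(c^n))_{\rm M}$. Then
\[
d_\tau\varepsilon^n=\frac{d_\tau N^n}{Q^n}-\frac{N^{n-1}\,d_\tau Q^n}{Q^nQ^{n-1}}.
\]
Because $\chi$ is Lipschitz by \eqref{chi:L}, $c\in W^{1,\infty}$ and $q_P$ is smooth, we get $|d_\tau Q^n|\le K$, while $|N^{n-1}|\le K\|e_c^{n-2}\|_{{\rm M},\phi}+K(\tau^2+h^2)$. For $d_\tau N^n$, the recursion gives
\[
N^n-N^{n-1}=-(m^{n-1}-m^{n-2})-(R^n-R^{n-1})=-R^{n-1}+\tau(q_P^{n-1}e_c^{n-1},1)_{\rm M}-R^n+R^{n-1},
\]
so that $|d_\tau N^n|\le K\|e_c^{n-1}\|_{{\rm M},\phi}+K(\tau+h^2)$. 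The leftover term involving $\|e_c^{n-2}\|$ must be converted into one involving $\|e_c^{n-1}\|$ to match the stated bound. I expect this reconciliation to be the main obstacle: either the index shift is absorbed through $m^{n-2}$ via the recursion, or the bound is stated loosely and $n-2$ terms are allowed. The case $n=1$ is handled directly, since $e_c^0=0$.
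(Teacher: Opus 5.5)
Your argument follows the paper's proof. Comparing the discrete mass balance \eqref{mass:be:2} with a quadrature of \eqref{mod:mass_conservation} gives $-N^n=(\beta^n e_c^n,1)_{\rm M}=(\phi e_c^{n-1},1)_{\rm M}+R^n$ with $R^n=\tau\Theta^n$, which is \eqref{alpha:be:2}. Summing this identity gives the second estimate, and differencing it (with the quotient-rule split and \eqref{chi:L}) gives the third. Your first two estimates are fine as written. There is one sign slip, $m^k=-N^k+\tau(q_P^ke_c^k,1)_{\rm M}$ rather than $-\tau$, but it does no harm since you only use absolute values.

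The step you call the main obstacle is not a real obstacle, but you need to close it rather than leave it open. By definition, $N^{n-1}=(\beta^{n-1}C_h^{n-1},1)_{\rm M}-(\beta^{n-1}c^{n-1},1)_{\rm M}=-(\beta^{n-1}e_c^{n-1},1)_{\rm M}$. Since $\beta^{n-1}$ is bounded and $\phi\ge\phi_*$, Cauchy--Schwarz gives $|N^{n-1}|\le K\|e_c^{n-1}\|_{{\rm M},\phi}$ directly. The $e_c^{n-2}$ term appeared only because you bounded $N^{n-1}$ through the first estimate at level $n-1$. The paper treats $|(\beta^{n-1}e_c^{n-1},1)_{\rm M}|$ in \eqref{alpha:be:7} exactly this way.

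With this bound, your estimates $|d_\tau Q^n|\le K$ and $|d_\tau N^n|\le K\|e_c^{n-1}\|_{{\rm M},\phi}+K(\tau+h^2)$ finish the case $n\ge 2$. For $n=1$, take $\varepsilon^0=0$ as in the paper; together with $m^0=0$ this gives $d_\tau\varepsilon^1=-R^1/(\tau Q^1)$, which is $O(\tau+h^2)$. Drop your fallback that the bound might tolerate an $e_c^{n-2}$ term: the statement requires $e_c^{n-1}$.
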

\begin{proof} In order to present estimates for $\varepsilon^{n}$, we observe from \eqref{mod:mass_conservation} that the exact solution $c^n$ satisfies
\begin{equation}\label{alpha:be:1}
    \begin{aligned}
        &\big( \phi c^{n},1 \big)_{\rm M} - \big( \phi c^{n-1},1 \big)_{\rm M} - \tau \big( [q_{P}c]^{n},1 \big)_{\rm M} = \tau \big( [q_{I} c_{I}]^{n},1 \big)_{\rm M} +  \tau \Theta^{n},  
    \end{aligned}
\end{equation}
where
$$
\Theta^{n}:= \big( \phi d_{\tau}c^{n} - \phi \p_{t}c^{n} ,1 \big)_{\rm M} + \big( \phi \p_{t}c^{n} - [q_{P}c + q_{I} c_{I}]^{n},1 \big)_{\rm M} - \int_{\Omega} \phi \p_{t} c^{n} -  [q_P c + q_I c_I]^{n}~{\rm d}\bm{x}.
$$
Then, subtracting the discrete mass conservation law \eqref{mass:be:2} from  \eqref{alpha:be:1}, we obtain an error equation in the discrete inner product form:
\begin{equation}\label{alpha:be:2}
   \big( \beta^{n}e_{c}^{n},1 \big)_{\rm M} = \big( \beta^{n}c^{n},1 \big)_{\rm M} - \big( \beta^{n}C_{h}^{n},1 \big)_{\rm M} = \big( \phi e_{c}^{n-1},1 \big)_{\rm M} + \tau \Theta^{n}.
\end{equation}
Furthermore, combining \eqref{alpha:be:2} with the definition of $\varepsilon^{n}$, it then follows from the numerical integration error of middle-point formula that 
\begin{equation}\label{alpha:be:3}
    |\varepsilon^{n}| 
    \leq  K \big| \big( \beta^{n} e_{c}^{n}, 1 \big)_{\rm M} \big| 
    \leq K \|e_{c}^{n-1}\|_{{\rm M},\phi} + K \big( \tau^{2} + h^{2} \big).
\end{equation}

Next, replacing $n$ by $\ell$ in both \eqref{alpha:be:1} and \eqref{mass:be:2} and summing from $\ell=1$ to $n~(n\ge 1)$  gives
\begin{align}
   \big( \beta^{n}c^{n},1 \big)_{\rm M} 
    & = \big( \phi c^{0}, 1 \big)_{\rm M} + \tau \sum_{\ell=1}^{n-1} \big( [q_{P} c]^{\ell}, 1 \big)_{\rm M} 
    + \tau \sum_{\ell=1}^{n}\left[  \big( [q_{I}c_{I}]^{\ell}, 1 \big)_{\rm M} +  \Theta^{\ell}\right],\label{alpha:be:4}\\
   \big( \beta^{n}C_{h}^{n}, 1 \big)_{\rm M} 
   & = \big( \phi C_{h}^{0}, 1 \big)_{\rm M} + \tau \sum_{\ell=1}^{n-1} \big( [q_{P}C_{h}]^{\ell}, 1\big)_{\rm M} 
   + \tau \sum_{\ell=1}^{n}\big( [q_{I} c_{I}]^{\ell}, 1 \big)_{\rm M}.\label{alpha:be:5}
 \end{align}
Then, subtracting \eqref{alpha:be:5} from \eqref{alpha:be:4} yields
\begin{equation*}
   \big( \beta^{n}e_{c}^{n},1 \big)_{\rm M} = \tau \sum_{\ell=1}^{n-1} \big( [q_{P} e_{c}]^{\ell}, 1 \big)_{\rm M} + \tau \sum_{\ell=1}^{n} \Theta^{\ell},
\end{equation*}
from which, similar to the derivation of \eqref{alpha:be:3}, we get another estimate for $\varepsilon^{n}$:
\begin{equation}\label{alpha:be:6}
   |\varepsilon^{n}| 
   \leq K \big| \big( \beta^{n} e_{c}^{n}, 1 \big)_{\rm M} \big| \leq K \tau \sum_{\ell=1}^{n-1} \|e_{c}^{\ell}\|_{{\rm M},\phi} + K\big( \tau + h^{2} \big).
\end{equation}
Thus, the inequality \eqref{alpha:be:6} directly indicates the second conclusion of this lemma. 

Finally, for $n \geq 2$, applying the operator $d_{\tau}$ to \eqref{alpha:be:4} and \eqref{alpha:be:5}, respectively, and then subtracting the resulting two equations, we obtain
\begin{equation*}
    d_{\tau} \big( \beta^{n} e_{c}^{n}, 1 \big)_{\rm M} = d_{\tau} \big( \beta^{n} c^{n}, 1 \big)_{\rm M} - d_{\tau} \big( \beta^{n} C_{h}^{n}, 1 \big)_{\rm M} = \big( [ q_{P} e_{c}]^{n-1}, 1 \big)_{\rm M} +\Theta^{n}. 
\end{equation*}
Again, analogous to the derivation of \eqref{alpha:be:3} and utilizing \eqref{alpha:be:2} and \eqref{chi:L}, we have
\begin{equation}\label{alpha:be:7}
   \begin{aligned}
     |d_{\tau}\varepsilon^{n}| 
       \leq K \left[ \big| d_{\tau}\big( \beta^{n} e_{c}^{n}, 1 \big)_{\rm M} \big| 
        +  \big|\big( \beta^{n-1} e_{c}^{n-1}, 1 \big)_{\rm M} \big| \right]
       \leq K \|e_{c}^{n-1}\|_{{\rm M},\phi} + K\big( \tau + h^{2} \big), \quad n \geq 2.
   \end{aligned}
\end{equation}
In particular, for $n=1$, note that $\varepsilon^{0}=e_{c}^{0}=0$, it then follows from \eqref{c:eta:be} and  \eqref{alpha:be:2} that
\begin{equation}\label{alpha:be:8}
   |d_{\tau}\varepsilon^{1}|
   = \frac{1}{\tau} | \varepsilon^{1}| 
   \leq \f{1}{K_{*}\tau} \big|\big( \beta^{1} e_{c}^{1}, 1 \big)_{\rm M} 
   \le K |\Theta^{1}|  \leq K\big( \tau + h^{2} \big).
\end{equation}
Therefore, \eqref{alpha:be:7}--\eqref{alpha:be:8} establish the third desired conclusion.
\end{proof}
\begin{remark}\label{rem:alpha} 
Notably, 
the relation between the prediction concentration error $\widehat{e}_{c}^{n}$ and the projection concentration error $e_{c}^{n}$ is not clear. To address this gap, we introduce a novel "biased" auxiliary solution $c_{\eta}^{n}$ in \eqref{c:eta:be}, which satisfies both bound preservation and discrete mass conservation, see Remark \ref{rem:hatec:be}. Most importantly, we establish a novel non-expansive result for the weighted $L^{2}$-projection \eqref{scm:be:min}, as stated in Lemma \ref{lem:hatec:be}. This result enables us to relate the errors before and after projection, provided that the auxiliary solution $w_h^n$ satisfies the two constraint conditions in \eqref{bound_mass:e1}. It should be noted, however, that the exact solution $c^{n}$ itself only guarantees the bound-preserving property, while the discrete mass conservation constraint $G(c^{n}) = 0$ does not hold. 

Moreover, in contrast to the model problems in \cite{CYY24,CYY26,ZZF26}, where the mass is constant in time, the mass of the present model is maintained by an injection-production balance and hence varies with time. Consequently, the "biased" auxiliary solution introduced herein involves the unknown error $e_{c}^{n}$ (see \eqref{c:eta:be}) and 
$$
\big| c_{\eta}^{n} - c^{n} \big| \leq \f{ \|\beta^{n}\|_{\rm M} }{K_{*}} \|e_{c}^{n}\|_{\rm M} \le K\|e_{c}^{n}\|_{\rm M},
$$ 
whereas $c_{\eta}^{n}$ in \cite{CYY24,CYY26,ZZF26} contains only known numerical integration errors of the exact solution (see (3.4) of \cite{CYY24}) and satisfy 
$$
\big| c_{\eta}^{n} - c^{n} \big| 
=  \f{|c^{n}|}{\big(c^{n},1\big)_{\rm M}} \Big| \int_{\Omega} c^{n}~{\rm d} \bm{x} - \big(c^{n},1\big)_{\rm M} \Big| 
\leq K h^{2}. 
$$
Hence, their framework allows direct estimation of $e_{\eta}^{n}$, $\widehat{e}_{\eta}^{n}$ and the recovery of $e_{c}^{n}$ via 
$$
\|e_{c}^{n}\|_{\rm M} 
\leq \|e_{\eta}^{n}\|_{\rm M} + \|c_{\eta}^{n} - c^{n}\|_{\rm M} 
\leq \|e_{\eta}^{n}\|_{\rm M} + Kh^2.
$$
However, the present work with error equation \eqref{err:be:3} couples $e_{\eta}^{n}$, $\widehat{e}_{\eta}^{n}$ with $e_{c}^{n}$, which prevents us from directly following their approach to derive the corresponding error estimates. Furthermore, we show that $\varepsilon^{n}$, defined in \eqref{c:eta:be}, is merely a perturbation factor and satisfies the error estimates established in Lemma \ref{lem:alpha:be}; these estimates will serve as a key ingredient in the main error analysis.  
\end{remark}

From \eqref{mod} and \eqref{scm:be:u}--\eqref{scm:be:c}, we obtain the following error equations
\begin{equation}\label{err:be:1}
  \nabla_{h} \cdot \bm{e}_{\bm{u}}^{n} = \epsilon_{1}^{n},	
\end{equation} 
\begin{equation}\label{err:be:2}
  \big[ \bm{\mal{A}}(c^{n-1}) - \bm{\mal{A}}(C_{h}^{n-1}) \big]\bm{u}^{n} + \bm{\mal{A}}(C_{h}^{n-1}) \bm{e}_{\bm{u}}^{n} + \nabla_{h} e_{p}^{n} = \bm{\epsilon}_{\bm{2}}^{n},
\end{equation}
\begin{equation}\label{err:be:3}
   \big[ \phi D_{\tau}\widehat{e}_{\eta} + \nabla_{h} \cdot ( \bm{\mal{B}}(\widehat{e}_{\eta})\bm{U}_{h} - \bm{\mal{D}}(\bm{U}_{h}) \nabla_{h}\widehat{e}_{\eta}  + \bm{e}_{\bm{u}} c + \bm{\epsilon}_{\bm{3}} - \bm{\zeta}_{\bm{1}}  ) - q_{P}\widehat{e}_{\eta} \big]^{n} = \epsilon_{4}^{n} + \zeta_{2}^{n},
\end{equation}
where the local truncation errors
$$
\begin{aligned}
    \epsilon_{1}^{n}&:= \nabla_{h} \cdot \bm{u}^{n} - \nabla \cdot \bm{u}^{n},\\ 
    \bm{\epsilon}_{\bm{2}}^{n}&:=\bm{\mal{A}}(c^{n-1})\bm{u}^{n}  - a(c^{n})\bm{u}^{n} + \nabla_{h}p^{n} - \nabla p^{n},\\
	\bm{\epsilon}_{\bm{3}}^{n}&:= \big[ \bm{U}_{h}c - D\nabla c  - \bm{\mal{B}}(c)\bm{U}_{h} + \bm{\mal{D}}(\bm{U}_{h}) \nabla_{h}c\big]^{n},\\
	\epsilon_{4}^{n}&:=\big[ \phi (d_{\tau}c - \p_{t} c) + \nabla_{h} \cdot (\bm{u}c - D \nabla c) - \nabla \cdot (\bm{u}c - D \nabla c)\big]^{n}, \\
	\bm{\zeta}_{\bm{1}}^{n}&:= \big[ \bm{\mal{B}}(\alpha c)\bm{U}_{h} -  \bm{\mal{D}}(\bm{U}_{h}) \nabla_{h} (\alpha c ) \big]^{n},\quad \zeta_{2}^{n}:= \big[ \phi  d_{\tau} (\alpha c) - q_{P} (\alpha c) \big]^{n}.
\end{aligned}
$$
Under the assumptions (A1)--(A2), and using \eqref{chi:L} and Lemmas \ref{lem:t:err}--\ref{lem:d:err} and \ref{lem:alpha:be}, it is easy to see
\begin{equation}\label{err:truc:be1}
   \begin{aligned}
       \|\epsilon_{1}^{n}\|_{\rm M}  \leq Kh^{2}, \quad
       \|\bm{\epsilon}_{\bm{2}}^{n}\|_{\rm T} + \|\epsilon_{4}^{n}\|_{\rm M} \leq K\big(\tau + h^{2}\big),  \quad 
       \|\zeta_{2}^{n}\|_{\rm M} \leq  K\|e_{c}^{n-1}\|_{{\rm M},\phi} + K\big(\tau + h^{2}\big).
   \end{aligned}
\end{equation}
Furthermore, if the numerical velocity $\bm{U}_{h}^{n}$ is uniformly bounded (see \eqref{assum:be} below), together with Lemmas \ref{lem:upwind:err} and \ref{lem:alpha:be}, we obtain
\begin{equation}\label{err:truc:be2}
   \begin{aligned}
       \|\bm{\epsilon}_{\bm{3}}^{n}\|_{\rm T}  \leq Kh^{2}, \quad
       \|\bm{\zeta}_{\bm{1}}^{n}\|_{\rm T} \leq K\|e_{c}^{n-1}\|_{{\rm M},\phi} + K\big(\tau^{2} + h^{2}\big).
   \end{aligned}
\end{equation}
Therefore, in the following error analysis, we shall first establish the boundedness of the numerical velocity $\bm{U}_{h}^{n}$ before invoking the estimates in \eqref{err:truc:be2}.

\begin{lemma}\label{lem:err:be:pu} 
Under assumptions (A1)--(A2), the system \eqref{scm:be:u} at time $t=t^{n}$ admits a unique solution pair $\{P_{h}^{n},\bm{U}_{h}^{n}\}$ for given $C_{h}^{n-1}$. Moreover, there exists a positive constant $K$, independent of $h$ and $\tau$, such that for all $n=1,\ldots,N_t$, the following estimate holds:
\begin{equation*}
	\|e_{p}^{n}\|_{\rm M}^{2} + \|\nabla_{h}e_{p}^{n}\|_{\rm T}^{2} 
    + \| \bm{e}_{\bm{u}}^{n} \|_{\rm T}^{2}
	\leq  K\| e_{c}^{n-1} \|_{{\rm M},\phi}^{2} + K\big( \tau^{2} + h^{4}\big).
\end{equation*}
\end{lemma}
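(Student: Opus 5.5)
For existence and uniqueness, we note that \eqref{scm:be:u} is a square linear system once the constraint $P_{1,1}^{n}=0$ is imposed. It therefore suffices to show that the homogeneous system ($q^n=0$) admits only the trivial solution. Suppose $\nabla_h\cdot\bm{U}=0$ and $\bm{\mal{A}}(C_h^{n-1})\bm{U}+\nabla_h P=0$. Take the $(\cdot,\cdot)_{\rm T}$ inner product of the second equation with $\bm{U}$ and apply Lemma \ref{lem:dis}:
$$(\bm{\mal{A}}(C_h^{n-1})\bm{U},\bm{U})_{\rm T}=(\nabla_h\cdot\bm{U},P)_{\rm M}=0.$$
Since $C_h^{n-1}\in[0,1]$ by the projection step, the averages $\mal{T}_xC_h^{n-1}$ and $\mal{T}_yC_h^{n-1}$ also lie in $[0,1]$, so \eqref{assump:a} gives $a\ge a_*$. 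Hence $\bm{U}=0$, which forces $\nabla_hP=0$, and $P_{1,1}=0$ together with Lemma \ref{lem:seminorm} gives $P=0$.

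For the error estimate we work with \eqref{err:be:1}--\eqref{err:be:2}. Take the inner product of \eqref{err:be:2} with $\bm{e}_{\bm{u}}^n$ and use Lemma \ref{lem:dis} with \eqref{err:be:1}:
$$\|\bm{e}_{\bm{u}}^n\|^2_{{\rm T},\bm{\mal{A}}(C_h^{n-1})}=(e_p^n,\epsilon_1^n)_{\rm M}-\big([\bm{\mal{A}}(c^{n-1})-\bm{\mal{A}}(C_h^{n-1})]\bm{u}^n,\bm{e}_{\bm{u}}^n\big)_{\rm T}+(\bm{\epsilon}_{\bm 2}^n,\bm{e}_{\bm{u}}^n)_{\rm T}.$$
We bound the three terms on the right as follows.
\begin{itemize}
\item The coefficient difference term is controlled by the Lipschitz bound in \eqref{assump:a}, which applies because both arguments lie in $[0,1]$, together with $\|\bm{u}\|_{L^\infty}\le K_e$. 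This gives a bound by $K\|\mal{T}_x e_c^{n-1}\|_x+K\|\mal{T}_y e_c^{n-1}\|_y\le K\|e_c^{n-1}\|_{{\rm M},\phi}$, using $\phi\ge\phi_*$ and a simple averaging inequality; the averaged boundary cells are excluded since $\mal{U}$ vanishes there.
\item The term $(\bm{\epsilon}_{\bm 2}^n,\bm{e}_{\bm{u}}^n)_{\rm T}$ is handled by \eqref{err:truc:be1}.
\item The term $(e_p^n,\epsilon_1^n)_{\rm M}$ couples back to the pressure error. Since $e_p^n$ need not vanish at $(1,1)$ (there $p^n_{1,1}$ need not be zero), we first normalize: replace $p$ by $p-p^n_{1,1}$, which is permissible because only $\nabla p$ enters the model. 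Then Lemma \ref{lem:seminorm} gives $\|e_p^n\|_{\rm M}\le K\|\nabla_h e_p^n\|_{\rm T}$.
\end{itemize}

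The pressure-gradient bound comes directly from \eqref{err:be:2}: $\nabla_h e_p^n=\bm{\epsilon}_{\bm 2}^n-\bm{\mal{A}}(C_h^{n-1})\bm{e}_{\bm{u}}^n-[\cdots]\bm{u}^n$, hence
$$\|\nabla_he_p^n\|_{\rm T}\le a^*\|\bm{e}_{\bm{u}}^n\|_{\rm T}+K\|e_c^{n-1}\|_{{\rm M},\phi}+K(\tau+h^2).$$
Inserting this into the bound for $(e_p^n,\epsilon_1^n)_{\rm M}$ and using $\|\epsilon_1^n\|_{\rm M}\le Kh^2$ together with Young's inequality absorbs the $\bm{e}_{\bm{u}}$ contribution into the left-hand side, which is bounded below by $a_*\|\bm{e}_{\bm{u}}^n\|_{\rm T}^2$. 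This yields
$$\|\bm{e}_{\bm{u}}^n\|_{\rm T}^2\le K\|e_c^{n-1}\|^2_{{\rm M},\phi}+K(\tau^2+h^4).$$
Feeding this back into the gradient bound and then into Lemma \ref{lem:seminorm} completes the estimate.

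The main obstacle is the coupling term $(e_p^n,\epsilon_1^n)_{\rm M}$, where the pressure error must be controlled through its gradient. This requires the normalization above, and after it the absorption via Young's inequality is routine. A secondary point needing care is the averaging bound $\|\mal{T}_x w\|_x\le\|w\|_{\rm M}$, used to pass from face-averaged concentration errors to the cell-centred norm in the first bullet.
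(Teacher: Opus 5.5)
Your proposal is correct and follows the paper's proof essentially line by line. It uses the same energy identity (testing the momentum equation against the velocity and cancelling the pressure term via Lemma~\ref{lem:dis}) for both uniqueness and the error equation, the gradient bound read off from \eqref{err:be:2}, Lemma~\ref{lem:seminorm} to control $(e_p^n,\epsilon_1^n)_{\rm M}$, and Young's inequality to absorb $\|\bm{e}_{\bm{u}}^n\|_{\rm T}$. Your normalization $p\mapsto p-p^n(x_1,y_1)$, which gives $[e_p^n]_{1,1}=0$ before Lemma~\ref{lem:seminorm} is invoked, and your remark that $C_h^{n-1}\in[0,1]$, which lets \eqref{assump:a} apply, make explicit two points the paper uses tacitly.
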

\begin{proof} 
The proof is divided into two parts. In the first part, we establish the existence and uniqueness of the solution, and in the second, we derive the desired error estimate.

\textbf{Part 1: Existence and uniqueness.}   
It suffices to show that the homogeneous system corresponding to \eqref{scm:be:u} admits only trivial solutions. To this end, let $\sigma_{h} \in \mal{C}$ and $ \bm{v}_{h} \in \mal{U}\times\mal{V}$ satisfy
\begin{equation}\label{exit:be:pu1}
  \nabla_{h} \cdot \bm{v}_{h} = 0, \quad
	\bm{\mal{A}}(C_{h}^{n-1}) \bm{v}_{h} + \nabla_{h} \sigma_{h} = 0.
\end{equation}
Eliminating $\bm{v}_h$ from \eqref{exit:be:pu1} (i.e., solving the second equation for $\bm{v}_h$ and substituting into the first), then taking the inner product of the resulting equation with $\sigma_{h}$ and applying Lemma \ref{lem:dis}, we obtain
\begin{equation*}
	- \left( \nabla_{h} \cdot \bigl(\bm{\mal{A}}^{-1}(C_{h}^{n-1}) \nabla_{h} \sigma_{h}\bigr), \sigma_{h} \right)_{\rm M} 
    = \left(  \bm{\mal{A}}^{-1}(C_{h}^{n-1}) \nabla_{h} \sigma_{h}, \nabla_{h} \sigma_{h} \right)_{\rm T} = 0,
\end{equation*}
which implies $\nabla_h \sigma_h \equiv 0$. Combined with the condition $\sigma_{1,1}=0$ gives $\sigma_h \equiv 0$. Substituting this back into \eqref{exit:be:pu1} also yields $\bm{v}_h \equiv \bm{0}$. Hence, the uniquely existence of solutions $P_{h}^{n} \in \mal{C}$ and $\bm{U}_{h}^{n} \in \mal{U}\times \mal{V}$ to \eqref{scm:be:u} is proved.

\textbf{Part 2: Error estimate.}
Taking the discrete inner products of the error equations \eqref{err:be:1} and \eqref{err:be:2} with $e_{p}^{n}$ and $\bm{e}_{\bm{u}}^{n}$ in the sense of $(\cdot,\cdot)_{\rm M}$ and $(\cdot,\cdot)_{\rm T}$, respectively, and adding the resulting equations yields
\begin{equation}\label{erru:be:1}
	\left(  \bm{\mal{A}}(C_{h}^{n-1}) \bm{e}_{\bm{u}}^{n}, \bm{e}_{\bm{u}}^{n} \right)_{\rm T}  = \left( \epsilon_{1}^{n}, e_{p}^{n} \right)_{\rm M} -  \left( \left(  \bm{\mal{A}}(c^{n-1}) - \bm{\mal{A}}(C_{h}^{n-1}) \right)  \bm{u}^{n} - \bm{\epsilon}_{\bm{2}}^{n}, \bm{e}_{\bm{u}}^{n} \right)_{\rm T} =:I_{1},
\end{equation}
where, Lemma \ref{lem:dis} is applied to the left-hand side of \eqref{erru:be:1}, i.e.,
\begin{equation*}
	\left( \nabla_{h} \cdot \bm{e}_{\bm{u}}^{n}, e_{p}^{n} \right)_{\rm M} + \left(  \nabla_{h} e_{p}^{n}, \bm{e}_{\bm{u}}^{n} \right)_{\rm T} = - \left( \bm{e}_{\bm{u}}^{n}, \nabla_{h} e_{p}^{n} \right)_{\rm T} + \left(  \nabla_{h} e_{p}^{n}, \bm{e}_{\bm{u}}^{n} \right)_{\rm T} = 0.
\end{equation*}

Assumptions (A1)--(A2), along with \eqref{assump:a}, implies that
\begin{equation}\label{erru:be:2}
	\| \left(  \bm{\mal{A}}(c^{n-1}) - \bm{\mal{A}}(C_{h}^{n-1}) \right)  \bm{u}^{n} \|_{\rm T} \leq  K\| e_{c}^{n-1} \|_{{\rm M},\phi},\quad
    \|\bm{\mal{A}}(C_{h}^{n-1}) \bm{e}_{\bm{u}}^{n}\|_{\rm T} \leq a^{*}\| \bm{e}_{\bm{u}}^{n} \|_{\rm T} . 
\end{equation}
Furthermore, utilizing the error equation \eqref{err:be:2} and the results \eqref{erru:be:2}, we get
\begin{equation}\label{erru:be:3}
	\begin{aligned}
		\| \nabla_{h} e_{p}^{n} \|_{\rm T}
		& = \| \bm{\mal{A}}(c^{n-1})\bm{u}^{n} - \bm{\mal{A}}(C_{h}^{n-1})\bm{u}^{n} + \bm{\mal{A}}(C_{h}^{n-1}) \bm{e}_{\bm{u}}^{n} - \bm{\epsilon}_{\bm{2}}^{n}\|_{\rm T}\\ 
		& \leq a^{*}\| \bm{e}_{\bm{u}}^{n} \|_{\rm T}  + K\| e_{c}^{n-1} \|_{{\rm M},\phi} + \| \bm{\epsilon}_{\bm{2}}^{n}\|_{\rm T}.
	\end{aligned}
\end{equation}
Then, invoking Lemma \ref{lem:seminorm}, the estimates \eqref{err:truc:be1}, \eqref{erru:be:2} and \eqref{erru:be:3},  the term $I_{1}$ can be bounded by
\begin{equation*}
	\begin{aligned}
		I_{1}
        &\leq \|e_{p}^{n} \|_{\rm M} \|\epsilon_{1}^{n}\|_{\rm M} + \| \bm{\mal{A}}(c^{n-1}) \bm{u}^{n} - \bm{\mal{A}}(C_{h}^{n-1})  \bm{u}^{n} - \bm{\epsilon}_{\bm{2}}^{n}\|_{\rm T} \|\bm{e}_{\bm{u}}^{n} \|_{\rm T} \\
		&\leq K \|\nabla_{h} e_{p}^{n} \|_{\rm T} \|\epsilon_{1}^{n}\|_{\rm M} + \| \bm{\mal{A}}(c^{n-1}) \bm{u}^{n} - \bm{\mal{A}}(C_{h}^{n-1})  \bm{u}^{n} - \bm{\epsilon}_{\bm{2}}^{n}\|_{\rm T} \|\bm{e}_{\bm{u}}^{n} \|_{\rm T} \\
		& \leq \f{a_{*}}{2} \| \bm{e}_{\bm{u}}^{n}\|_{\rm T}^{2} + K\| e_{c}^{n-1} \|_{{\rm M},\phi}^{2} + K \big( \tau^{2} + h^{4}\big).
	\end{aligned}
\end{equation*}
Finally, substituting the above estimate $I_{1}$ into \eqref{erru:be:1} and using \eqref{assump:a} yields
\begin{equation}\label{err:be:u}
    \| \bm{e}_{\bm{u}}^{n} \|_{\rm T}^{2} 
    \leq K\| e_{c}^{n-1} \|_{{\rm M},\phi}^{2} + K \big( \tau^{2} + h^{4}\big).
\end{equation}

Moreover, it follows from Lemma \ref{lem:seminorm} and the results \eqref{erru:be:3} and \eqref{err:be:u} that
\begin{equation}\label{err:be:p}
	\|e_{p}^{n}\|_{\rm M}^{2} + \|\nabla_{h}e_{p}^{n}\|_{\rm T}^{2} 
    \leq K \|\nabla_{h}e_{p}^{n}\|_{\rm T}^{2}
	\leq  K\| e_{c}^{n-1} \|_{{\rm M},\phi}^{2} + K \big( \tau^{2} + h^{4}\big).
\end{equation}
The proof is thereby completed by the estimates derived in \eqref{err:be:u}--\eqref{err:be:p}. 
\end{proof}

Based on the fact that $e_{c}^{0}=0$, along with Lemma \ref{lem:err:be:pu}, the following conclusion is established.
\begin{corollary}\label{col:err:be:u1} Under the assumptions of Lemma \ref{lem:err:be:pu}, the system \eqref{scm:be:u} at time $t=t^{1}$ admits a unique solution pair $\{P^{1},\bm{U}^{1}\}$, and there exists a constant $K>0$, independent of $h$ and $\tau$, such that
	\begin{equation*}
		\|e_{p}^{1}\|_{\rm M}^{2} + \|\nabla_{h}e_{p}^{1}\|_{\rm T}^{2} + \| \bm{e}_{\bm{u}}^{1} \|_{\rm T}^{2} \leq K\big( \tau^{2} + h^{4}\big).
	\end{equation*}
\end{corollary}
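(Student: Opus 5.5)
This is a direct specialization of Lemma \ref{lem:err:be:pu} to $n=1$, so I would simply instantiate that lemma rather than redo any energy argument. The plan has two steps, one for solvability and one for the error bound.

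\textbf{Step 1: solvability.} Part 1 of the proof of Lemma \ref{lem:err:be:pu} uses only two facts:
\begin{itemize}
\item the matrix $\bm{\mal{A}}(C_{h}^{n-1})$ is symmetric positive definite;
\item the normalization $P_{1,1}^{n}=0$ is imposed.
\end{itemize}
For $n=1$ we have $C_{h}^{0}=c^{o}$, and $c^{o}$ takes values in $[0,1]$. Hence \eqref{assump:a} gives $a_{*}\le a(\mal{T}_{x}C_{h}^{0}),\,a(\mal{T}_{y}C_{h}^{0})\le a^{*}$, since averages of values in $[0,1]$ stay in $[0,1]$. Therefore $\bm{\mal{A}}(C_{h}^{0})$ is positive definite. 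The homogeneous-system argument then shows that $\{P_{h}^{1},\bm{U}_{h}^{1}\}$ exists and is unique.

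\textbf{Step 2: error bound.} Apply the estimate of Lemma \ref{lem:err:be:pu} with $n=1$:
$$
\|e_{p}^{1}\|_{\rm M}^{2}+\|\nabla_{h}e_{p}^{1}\|_{\rm T}^{2}+\|\bm{e}_{\bm{u}}^{1}\|_{\rm T}^{2}\le K\|e_{c}^{0}\|_{{\rm M},\phi}^{2}+K(\tau^{2}+h^{4}).
$$
Because the scheme is initialized with $C_{h}^{0}=c^{o}=c(t^{0})$ at the grid nodes, we have $e_{c}^{0}=0$, as noted after Lemma \ref{lem:hatec:be}. The first term on the right therefore vanishes, which is exactly the claimed bound.

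The only point needing care is that the constant $K$ in Lemma \ref{lem:err:be:pu} does not depend on any a priori bound on the numerical velocity. Going through its proof, the constant involves only $a_{*}$, $a^{*}$, $L_{a}$, $K_{e}$, $K_{o}$ and the truncation bounds on $\epsilon_{1}^{1}$ and $\bm{\epsilon}_{\bm{2}}^{1}$ from \eqref{err:truc:be1}. None of these require the hypothesis \eqref{assum:be}, which is needed only for $\bm{\epsilon}_{\bm{3}}$ and $\bm{\zeta}_{\bm{1}}$. So the step is immediate.
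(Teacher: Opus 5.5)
Your proposal is correct and follows the paper's own argument: the paper obtains the corollary directly by specializing Lemma \ref{lem:err:be:pu} to $n=1$ and using $e_{c}^{0}=0$. Your two added checks are valid details the paper leaves implicit: that $C_{h}^{0}=c^{o}\in[0,1]$, so the bounds \eqref{assump:a} apply, and that the constant in that lemma does not depend on the induction hypothesis \eqref{assum:be}.
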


The following lemma provides an approach to address the difficulty mentioned in Remark \ref{rem:alpha}, and enables us to estimate both the "biased" error functions $e_{\eta}^{n}$, $\widehat{e}_{\eta}^{n}$ and the original error functions $e_{c}^{n}$, $\widehat{e}_{c}^{n}$, simultaneously.
\begin{lemma}\label{lem:be:ec}
Suppose that $|\varepsilon^{n}| \leq \eta < 1/2$. Then, there exists a positive constant $K$, independent of $h$ and $\tau$, such that for all $n=1,\ldots,N_t$, the following estimate holds:
  \begin{equation*}
     \begin{aligned}
         \big( \phi D_{\tau} \widehat{e}_{\eta}^{n},  \widehat{e}_{\eta}^{n} \big)_{\rm M} 
            - \left( [q_{P}\widehat{e}_{\eta}]^{n}, \widehat{e}_{\eta}^{n} \right)_{\rm M} 
            & \geq \f{1}{4}d_{\tau} 
     \Bigl[\|\widehat{e}_{\eta}^{n}\|_{{\rm M},\beta^{n}}^{2} 
         +\|e_{c}^{n}\|_{{\rm M},\phi}^{2} 
         + \|[\alpha c]^{n}\|_{{\rm M},\phi}^{2}  
         + 2\big( \phi e_{c}^{n}, [\alpha c]^{n} \big)_{\rm M}\Bigr]
               \\
            & \qquad - K \|\widehat{e}_{\eta}^{n}\|_{{\rm M},\beta^{n}}^{2}  
            - K \|e_{c}^{n-1}\|_{{\rm M},\phi}^{2} 
            - K \big( \tau^{2} + h^{4} \big).
        \end{aligned}
    \end{equation*}
\end{lemma}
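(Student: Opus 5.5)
The plan is to expand the left-hand side as a discrete time-derivative term, and then replace the old-level biased error by the projected quantities using the non-expansiveness of the projection. I read the time difference as
$$
D_{\tau}\widehat{e}_{\eta}^{n}=\f{\widehat{e}_{\eta}^{n}-e_{\eta}^{n-1}}{\tau}.
$$
This is consistent with $D_\tau\widehat{C}_h^n=(\widehat{C}_h^n-C_h^{n-1})/\tau$ and with $c_\eta^{n-1}$ being the biased solution at level $n-1$.

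\textbf{Step 1 (combine the two terms).} Multiplying by $\tau$ and using $\beta^{n}=\phi-\tau q_P^{n}$, the left-hand side times $\tau$ becomes
$$
\big(\beta^{n}\widehat{e}_{\eta}^{n},\widehat{e}_{\eta}^{n}\big)_{\rm M}-\big(\phi e_{\eta}^{n-1},\widehat{e}_{\eta}^{n}\big)_{\rm M}.
$$
Since $q_P<0$ we have $\phi\le\beta^{n}$, and Cauchy--Schwarz/Young in the weighted inner product give
$$
\big(\phi e_{\eta}^{n-1},\widehat{e}_{\eta}^{n}\big)_{\rm M}\le \tfrac12\|\widehat{e}_{\eta}^{n}\|_{{\rm M},\beta^n}^2+\tfrac12\|e_{\eta}^{n-1}\|_{{\rm M},\phi^2/\beta^n}^2 .
$$
Using $\phi^2/\beta^n\le\phi$, the left-hand side times $\tau$ is therefore at least
$$
\tfrac12\|\widehat{e}_{\eta}^{n}\|^2_{{\rm M},\beta^n}-\tfrac12\|e_{\eta}^{n-1}\|^2_{{\rm M},\phi}.
$$

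\textbf{Step 2 (split the halves and apply the projection estimate).} I write each $\tfrac12$ as $\tfrac14+\tfrac14$.

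For the new level, the first inequality in \eqref{aux:proj_err:be} is valid because $|\varepsilon^n|\le\eta$ is assumed (Remark \ref{rem:hatec:be}). Together with $\phi\le\beta^n$ it gives
$$
\tfrac14\|\widehat{e}_{\eta}^{n}\|^2_{{\rm M},\beta^n}\ge\tfrac14\|e_\eta^n\|^2_{{\rm M},\phi}.
$$
For the old level, the same estimate at level $n-1$, with $\phi\le\beta^{n-1}$, gives
$$
\tfrac14\|e_{\eta}^{n-1}\|^2_{{\rm M},\phi}\le\tfrac14\|\widehat{e}_{\eta}^{n-1}\|^2_{{\rm M},\beta^{n-1}}.
$$
For $n=1$ this step is trivial, since $\varepsilon^0=0$ and $e_c^0=0$ give $e_\eta^0=\widehat{e}_{\eta}^{0}=0$.

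Collecting terms, the left-hand side is bounded below by
$$
\tfrac14 d_\tau\Big[\|\widehat{e}_\eta^{n}\|^2_{{\rm M},\beta^n}+\|e_\eta^n\|^2_{{\rm M},\phi}\Big].
$$
The weight mismatch disappears because each level carries its own weight $\beta^n$ or $\beta^{n-1}$ inside $d_\tau$.

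\textbf{Step 3 (expand the biased error).} Since $e_\eta^n=e_c^n+[\alpha c]^n$, we have
$$
\|e^n_\eta\|^2_{{\rm M},\phi}=\|e_c^n\|^2_{{\rm M},\phi}+\|[\alpha c]^n\|^2_{{\rm M},\phi}+2(\phi e_c^n,[\alpha c]^n)_{\rm M}.
$$
Substituting this produces exactly the bracket in the statement. Under this reading of $D_\tau$, the negative terms $-K\|\widehat{e}_{\eta}^{n}\|^2_{{\rm M},\beta^n}-K\|e_c^{n-1}\|^2_{{\rm M},\phi}-K(\tau^2+h^4)$ are not needed and serve only as slack.

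\textbf{Main obstacle.} The delicate point is the exact definition of $D_\tau\widehat{e}_\eta^n$ as it appears in \eqref{err:be:3}. If it is instead formed as $(c_\eta^n-c^{n-1})/\tau-D_\tau\widehat{C}_h^n$, i.e.\ with an unbiased $c^{n-1}$, then it differs from my reading by $-[\alpha c]^{n-1}/\tau$. In that case I would first try to cancel this term exactly against the part of $\zeta_2^n=\big[\phi d_\tau(\alpha c)-q_P(\alpha c)\big]^n$ in \eqref{err:be:3} that carries $-[\phi\alpha c]^{n-1}/\tau$. Failing that, I would estimate the leftover remainder $\big(\phi[\alpha c]^{n-1}/\tau,\widehat{e}_{\eta}^{n}\big)_{\rm M}$ with Young's inequality and Lemma \ref{lem:alpha:be}. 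The $O(\tau^{-1})$ prefactor of that remainder cannot be bounded directly, so only the $d_\tau\varepsilon^n$ part (using $|d_\tau\varepsilon^n|\le K\|e_c^{n-1}\|_{{\rm M},\phi}+K(\tau+h^2)$) and the Lipschitz bound \eqref{chi:L} would produce exactly the extra terms $K\|\widehat{e}_\eta^n\|^2_{{\rm M},\beta^n}+K\|e_c^{n-1}\|^2_{{\rm M},\phi}+K(\tau^2+h^4)$ in the statement.
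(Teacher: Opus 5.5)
Your argument is correct, and for one of the two halves it takes a simpler route than the paper. The paper proves two separate lower bounds and averages them.

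The second one, \eqref{lowbound:e2}, is essentially your treatment of the $\widehat{e}_{\eta}$ part. It uses an exact polarization identity in place of your Young inequality with weight $\phi^{2}/\beta^{n}$, followed by \eqref{aux:proj_err:be} at level $n-1$.

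The first one, \eqref{lowbound:e1}, comes from a lengthy expansion of $(\phi D_{\tau}\widehat{e}_{\eta}^{n},\widehat{e}_{\eta}^{n})_{\rm M}$ into terms involving $d_{\tau}e_{c}^{n}$, $\widehat{e}_{\eta}^{n}-e_{\eta}^{n}$ and $d_{\tau}[\alpha c]^{n}$. Two pieces of that expansion produce the remainders $-K\|\widehat{e}_{\eta}^{n}\|_{{\rm M},\beta^{n}}^{2}-K\|e_{c}^{n-1}\|_{{\rm M},\phi}^{2}-K(\tau^{2}+h^{4})$. The cross term $\tau(\phi D_{\tau}\widehat{e}_{c}^{n},d_{\tau}[\alpha c]^{n})_{\rm M}$ must be controlled through the $d_{\tau}\varepsilon^{n}$ bound of Lemma \ref{lem:alpha:be}. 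The term $\frac{1}{\tau}(\phi(\widehat{e}_{\eta}^{n}-e_{\eta}^{n}),e_{\eta}^{n})_{\rm M}$ leaves a $q_{P}$-remainder.

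You instead note that the last three terms of the bracket sum to $\|e_{\eta}^{n}\|_{{\rm M},\phi}^{2}$. You then use the level-$n$ non-expansiveness $\|e_{\eta}^{n}\|_{{\rm M},\phi}\le\|e_{\eta}^{n}\|_{{\rm M},\beta^{n}}\le\|\widehat{e}_{\eta}^{n}\|_{{\rm M},\beta^{n}}$ to extract this half from the same one-line energy bound. This bypasses Lemma \ref{lem:alpha:be} entirely and yields a sharper inequality with no remainder terms. That inequality implies the stated one and would serve equally well in the proof of Theorem \ref{thm:err:be}. The paper's longer decomposition only delivers a weaker form of the same estimate.

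Two minor points.

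First, your ``main obstacle'' does not arise. By the definition of $D_{\tau}$, $D_{\tau}\widehat{e}_{\eta}^{n}=(\widehat{e}_{\eta}^{n}-e_{\eta}^{n-1})/\tau$. This is consistent with \eqref{err:be:3}, because $\zeta_{2}^{n}$ contains the full difference $\phi d_{\tau}[\alpha c]^{n}$ that results from biasing both $c^{n}$ and $c^{n-1}$. The paper's proof uses exactly your reading.

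Second, your old-level step needs \eqref{aux:proj_err:be} at level $n-1$, i.e.\ $|\varepsilon^{n-1}|\le\eta$, which is not among the lemma's stated hypotheses. The paper's \eqref{lowbound:e2} relies on the same fact. It is supplied by the induction hypothesis \eqref{assum:be} wherever the lemma is applied.
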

\begin{proof} On one hand, note that $\widehat{e}_{\eta}^{n} = \widehat{e}_{c}^{n} + [\alpha c]^{n}$, $e_{\eta}^{n-1}=e_{c}^{n-1} + [\alpha c]^{n-1}$ and the identity $\widehat{e}_{c}^{n}-e_{c}^{n}= \widehat{e}_{\eta}^{n}-e_{\eta}^{n}$. Then, using the fact $q_{P}^{n} < 0$, we have 
\begin{equation}\label{be:ec:2}
  \begin{aligned}
     &  \big( \phi D_{\tau} \widehat{e}_{\eta}^{n},  \widehat{e}_{\eta}^{n} \big)_{\rm M} - \left( [q_{P}\widehat{e}_{\eta}]^{n}, \widehat{e}_{\eta}^{n} \right)_{\rm M} \ge 
     \big( \phi D_{\tau} \widehat{e}_{\eta}^{n},  \widehat{e}_{\eta}^{n} \big)_{\rm M} \\
     & \quad = \big( \phi d_{\tau} e_{c}^{n}  , e_{c}^{n} + \widehat{e}_{\eta}^{n} -e_{\eta}^{n} + [\alpha c]^{n}\big)_{\rm M} + \f{1}{\tau}\big( \phi ( \widehat{e}_{\eta}^{n} - e_{\eta}^{n}) , e_{\eta}^{n} + \widehat{e}_{\eta}^{n} - e_{\eta}^{n} \big)_{\rm M} + \big( \phi d_{\tau} [\alpha c]^{n},  \widehat{e}_{c}^{n} + [\alpha c]^{n} \big)_{\rm M}  \\
       & \quad = \big( \phi d_{\tau} e_{c}^{n}, e_{c}^{n} \big)_{\rm M} + \big( \phi d_{\tau} e_{c}^{n}, \widehat{e}_{\eta}^{n}-e_{\eta}^{n} \big)_{\rm M} + d_{\tau}\big( \phi e_{c}^{n}, [\alpha c]^{n} \big)_{\rm M} + \tau \big( \phi D_{\tau} \widehat{e}_{c}^{n}, d_{\tau}[\alpha c]^{n} \big)_{\rm M}  \\
       &\qquad  + \f{1}{\tau}  \big( \phi \big(\widehat{e}_{\eta}^{n}-e_{\eta}^{n}\big), e_{\eta}^{n} \big)_{\rm M}  + \f{1}{\tau} \|\widehat{e}_{\eta}^{n}-e_{\eta}^{n}\|_{{\rm M},\phi}^{2} + \big( \phi d_{\tau} [\alpha c]^{n}, [\alpha c]^{n} \big)_{\rm M} 
       \\
       &\quad =: I_{2} + d_{\tau}\big( \phi e_{c}^{n}, [\alpha c]^{n} \big)_{\rm M} + \tau \big( \phi D_{\tau} \widehat{e}_{c}^{n}, d_{\tau}[\alpha c]^{n} \big)_{\rm M} + \f{1}{\tau}  \big( \phi \big(\widehat{e}_{\eta}^{n}-e_{\eta}^{n}\big), e_{\eta}^{n} \big)_{\rm M},
    \end{aligned}
\end{equation}
where 
\begin{equation*}
   \begin{aligned}
       I_{2}& := \big( \phi d_{\tau} e_{c}^{n}, e_{c}^{n} \big)_{\rm M} + \big( \phi d_{\tau} e_{c}^{n}, \widehat{e}_{\eta}^{n}-e_{\eta}^{n} \big)_{\rm M}  + \f{1}{\tau} \|\widehat{e}_{\eta}^{n}-e_{\eta}^{n}\|_{{\rm M},\phi}^{2}  + \big( \phi d_{\tau} [\alpha c]^{n}, [\alpha c]^{n} \big)_{\rm M} \\
       & \geq \f{\|e_{c}^{n}\|_{{\rm M},\phi}^{2} + \|e_{c}^{n}-e_{c}^{n-1}\|_{{\rm M},\phi}^{2} - \|e_{c}^{n-1}\|_{{\rm M},\phi}^{2}}{2\tau} 
       - \f{ \|e_{c}^{n}-e_{c}^{n-1}\|_{{\rm M},\phi}^{2} 
            + \|\widehat{e}_{\eta}^{n}-e_{\eta}^{n}\|_{{\rm M},\phi}^{2} }{2\tau} \\
       & \qquad + \f{\|\widehat{e}_{\eta}^{n}-e_{\eta}^{n}\|_{{\rm M},\phi}^{2}}{\tau} 
       + \f{ \|[\alpha c]^{n}\|_{{\rm M},\phi}^{2} 
       - \|[\alpha c]^{n-1}\|_{{\rm M},\phi}^{2} }{2\tau} \\
       & \geq \f{\|e_{c}^{n}\|_{{\rm M},\phi}^{2} - \|e_{c}^{n-1}\|_{{\rm M},\phi}^{2}}{2\tau} 
         + \f{ \|[\alpha c]^{n}\|_{{\rm M},\phi}^{2} - \|[\alpha c]^{n-1}\|_{{\rm M},\phi}^{2} }{2\tau}.
   \end{aligned}
\end{equation*}
Moreover, it follows from Lemma \ref{lem:alpha:be} that
\begin{equation*}
   \begin{aligned}
       \tau \big( \phi D_{\tau} \widehat{e}_{c}^{n}, d_{\tau}[\alpha c]^{n} \big)_{\rm M} 
       &\ge - \big( \|\widehat{e}_{\eta}^{n} - [\alpha c]^{n}\|_{{\rm M},\phi} + \|e_{c}^{n-1}\|_{{\rm M},\phi} \big)  \|d_{\tau}[\alpha c]^{n}\|_{{\rm M},\phi} \\
       & \ge - K \|\widehat{e}_{\eta}^{n}\|_{{\rm M},\beta^{n}}^{2} 
       - K \|e_{c}^{n-1}\|_{{\rm M},\phi}^{2} 
       - K \big( \tau^{2} + h^{4} \big).
   \end{aligned}
\end{equation*}
Furthermore, by Remark \ref{rem:hatec:be}, if $|\varepsilon^{n}| \leq \eta < 1/2$, then the estimate \eqref{aux:proj_err:be} holds and we further obtain
\begin{equation*}
   \begin{aligned}
      \f{1}{\tau}  \big( \phi \big(\widehat{e}_{\eta}^{n}-e_{\eta}^{n}\big), e_{\eta}^{n} \big)_{\rm M} & = \f{1}{\tau}  \big( \beta^{n} \big(\widehat{e}_{\eta}^{n}-e_{\eta}^{n}\big), e_{\eta}^{n} \big)_{\rm M} +  \big( q_{P}^{n} \big(\widehat{e}_{\eta}^{n}-e_{\eta}^{n}\big), e_{\eta}^{n} \big)_{\rm M} \\
      & \geq \big( q_{P}^{n} \big(\widehat{e}_{\eta}^{n}-e_{\eta}^{n}\big), e_{\eta}^{n} \big)_{\rm M}
      \geq - K \|\widehat{e}_{\eta}^{n}\|_{{\rm M},\beta^{n}}^{2}.
   \end{aligned}
\end{equation*}
Therefore, substituting the above estimates into \eqref{be:ec:2} yields the lower bound
\begin{equation} \label{lowbound:e1}
     \begin{aligned}
         \big( \phi D_{\tau} \widehat{e}_{\eta}^{n},  \widehat{e}_{\eta}^{n} \big)_{\rm M} 
            - \left( [q_{P}\widehat{e}_{\eta}]^{n}, \widehat{e}_{\eta}^{n} \right)_{\rm M} 
            & \geq \f{1}{2} d_{\tau} \Bigl[\|e_{c}^{n}\|_{{\rm M},\phi}^{2} 
               +   \|[\alpha c]^{n}\|_{{\rm M},\phi}^{2} + 2\big( \phi e_{c}^{n}, [\alpha c]^{n} \big)_{\rm M} \Bigr] 
               \\
            & \qquad - K \|\widehat{e}_{\eta}^{n}\|_{{\rm M},\beta^{n}}^{2}  
            - K \|e_{c}^{n-1}\|_{{\rm M},\phi}^{2} 
            - K \big( \tau^{2} + h^{4} \big).
        \end{aligned}
    \end{equation}

On the other hand, using the projection error estimate in \eqref{aux:proj_err:be} and the fact that $q_{P}^{n}$, $q_{P}^{n-1}<0$, we derive another lower bound:
\begin{equation}\label{lowbound:e2}
\begin{aligned}
       & \left( \phi D_{\tau} \widehat{e}_{\eta}^{n}, \widehat{e}_{\eta}^{n} \right)_{\rm M} -  \left( [q_{P}\widehat{e}_{\eta}]^{n}, \widehat{e}_{\eta}^{n} \right)_{\rm M} \\ 
       & \quad = \f{1}{2\tau} \Big( \|\widehat{e}_{\eta}^{n}\|_{{\rm M},\phi}^{2} + \|\widehat{e}_{\eta}^{n}-e_{\eta}^{n-1}\|_{{\rm M},\phi}^{2} - \|e_{\eta}^{n-1}\|_{{\rm M},\phi}^{2} \Big) -  \left( [q_{P}\widehat{e}_{\eta}]^{n}, \widehat{e}_{\eta}^{n} \right)_{\rm M} \\
       & \quad  = \f{1}{2\tau} \Big( \|\widehat{e}_{\eta}^{n}\|_{{\rm M},\beta^{n}}^{2} 
            + \|\widehat{e}_{\eta}^{n}-e_{\eta}^{n-1}\|_{{\rm M},\phi}^{2} 
            - \|e_{\eta}^{n-1}\|_{{\rm M},\beta^{n-1}}^{2} \Big) 
       - \f{1}{2} \left( [q_{P}\widehat{e}_{\eta}]^{n}, \widehat{e}_{\eta}^{n} \right)_{\rm M} 
       - \f{1}{2} \left( [q_{P}e_{\eta}]^{n-1}, e_{\eta}^{n-1} \right)_{\rm M} \\
       &  \quad 
       \geq \f{1}{2\tau}\Big( \|\widehat{e}_{\eta}^{n}\|_{{\rm M},\beta^{n}}^{2}  - \|\widehat{e}_{\eta}^{n-1}\|_{{\rm M},\beta^{n-1}}^{2} \Big).
    \end{aligned}
\end{equation}
Thus, adding the above two estimates \eqref{lowbound:e1} and \eqref{lowbound:e2} together, we obtain the desired result.
\end{proof}

The main error estimate of this paper is stated as follows.
\begin{theorem}\label{thm:err:be} Under assumptions (A1)--(A2), the BE-BCFD-P scheme \eqref{scm:be:u}--\eqref{scm:be:min} admits a unique solution triple $\{P_{h}^{n},\bm{U}_{h}^{n},C_{h}^{n}\}$, provided that 
$\tau \leq \tau_{*}:= \f{\phi_{*}D_{*}}{8(K_{e}+1)^{2}}$, 
$h\leq h_{*}:=\f{2D_{*}}{K_{e}+1}$, and the time-step condition
\begin{equation}\label{cond:be}
	\tau = o(h)
\end{equation}
is satisfied. Furthermore, there exists a positive constant $K$, independent of $h$ and $\tau$, such that for all $n=1,\ldots, N_t$, the following estimate holds:
\begin{equation*}
\begin{aligned}
 \|e_{c}^{n}\|_{\rm M}^{2}   
     + \|e_{p}^{n}\|_{\rm M}^{2} + \|\nabla_{h} e_{p}^{n}\|_{\rm T}^{2} 
     + \| \bm{e}_{\bm{u}}^{n} \|_{\rm T}^{2} 
    \leq K\big( \tau^{2}  +  h^{4}\big).
\end{aligned}
\end{equation*}
\end{theorem}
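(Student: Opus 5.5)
The proof will go by induction on $n$, combining solvability with an a priori $L^\infty$ bound on the discrete velocity. The induction hypothesis at level $n-1$ is
\begin{equation*}
\|e_{c}^{\ell}\|_{{\rm M},\phi}^{2}\le K_{0}(\tau^{2}+h^{4}) \quad \text{for } \ell\le n-1,
\end{equation*}
together with $\|\bm{U}_{h}^{\ell}\|_{\infty}\le K_{e}+1$; at $n=1$ this follows from $e_c^0=0$ and Corollary \ref{col:err:be:u1}.

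\textbf{Solvability and velocity bound at step $n$.} Lemma \ref{lem:err:be:pu} gives unique solvability of \eqref{scm:be:u} and
\begin{equation*}
\|\bm{e}_{\bm{u}}^{n}\|_{\rm T}\le K(\tau+h^{2}).
\end{equation*}
By an inverse inequality,
\begin{equation*}
\|\bm{e}_{\bm u}^n\|_\infty\le Kh^{-1}(\tau+h^2),
\end{equation*}
which is at most $1$ for $h$ small precisely because $\tau=o(h)$; hence $\|\bm{U}_{h}^{n}\|_{\infty}\le K_{e}+1$. This uniform bound is what makes \eqref{err:truc:be2} available.

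For unique solvability of the linear prediction step \eqref{scm:be:c}, I will test the homogeneous system with its solution $w$. The diffusion term gives a positive contribution, since $\underline D_{ii}\ge D_*/2$ when $h\le h_*$. The upwind convective term, after Lemma \ref{lem:dis}, is bounded by $(K_e+1)\|w\|\,\|\nabla_h w\|$. Young's inequality then leaves a term $(\phi_*/\tau - 2(K_e+1)^2/D_*)\|w\|^2$, which is positive for $\tau\le\tau_*$, so $w=0$. The projection \eqref{scm:be:min} is uniquely solvable by strict convexity together with Remark \ref{rem:lm:be}.

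\textbf{Energy estimate.} First I secure $|\varepsilon^n|\le\eta$, which follows from Lemma \ref{lem:alpha:be} and the induction hypothesis for small $\tau,h$. Then I test \eqref{err:be:3} with $\widehat e_\eta^n$ and apply Lemma \ref{lem:be:ec} to the time-derivative and $q_P$ terms. For the spatial term I use Lemma \ref{lem:dis} and write it as
\begin{equation*}
-(\bm{\mal B}(\widehat e_\eta)\bm U_h-\bm{\mal D}\nabla_h\widehat e_\eta+\bm e_{\bm u}c+\bm\epsilon_3-\bm\zeta_1,\nabla_h\widehat e_\eta)_{\rm T}.
\end{equation*}
Its diffusion part yields $\frac{D_*}{2}\|\nabla_h\widehat e_\eta\|^2$. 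The convective part and the $\bm e_{\bm u}c$, $\bm\epsilon_3$, $\bm\zeta_1$ pieces are absorbed by Young's inequality, using the bound on $\bm e_{\bm u}$ from Lemma \ref{lem:err:be:pu}, \eqref{err:truc:be1}, \eqref{err:truc:be2} and the velocity bound. On the right side, $(\epsilon_4+\zeta_2,\widehat e_\eta)$ is handled by Cauchy--Schwarz.

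This gives, with $E^n$ the bracket in Lemma \ref{lem:be:ec},
\begin{equation*}
\tfrac14 d_\tau E^n\le K\|\widehat e_\eta^n\|^2_{{\rm M},\beta^n}+K\|e_c^{n-1}\|^2_{{\rm M},\phi}+K(\tau^2+h^4).
\end{equation*}
Summing and applying discrete Gronwall requires $E^n$ to control $\|e_c^n\|^2+\|\widehat e_\eta^n\|^2$. I note that
\begin{equation*}
\|e_c\|^2+\|\alpha c\|^2+2(\phi e_c,\alpha c)=\|e_c+\alpha c\|^2_{{\rm M},\phi}\ge0,
\end{equation*}
so this alone only controls $\|e_\eta\|$. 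To recover $e_c$ I will use the second bound of Lemma \ref{lem:alpha:be}: $\|\alpha^n c^n\|^2\le|\varepsilon^n|^2K$ is bounded by past errors plus $K(\tau^2+h^4)$. Then
\begin{equation*}
\|e_c^n\|^2\le2\|e_\eta^n\|^2+2\|\alpha c\|^2,
\end{equation*}
and the $\widehat e_\eta$ term in $E$ controls $\|\widehat e_\eta^n\|$. Gronwall, valid for $\tau$ small, then gives
\begin{equation*}
\|\widehat e_\eta^n\|^2+\|e_c^n\|^2\le K(\tau^2+h^4).
\end{equation*}
Since this constant does not depend on $K_0$ when $K_0$ is chosen large, the induction closes. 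Lemma \ref{lem:err:be:pu} then yields the pressure and velocity estimates.

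\textbf{Main obstacle.} The hardest step is the coercivity bookkeeping: turning $E^n$ plus the $\varepsilon$ bounds into a Gronwall-ready inequality while consistently using the induction hypothesis, both to justify $|\varepsilon^n|\le\eta$ and to keep the velocity bounded. The delicate point is the $|\varepsilon|^2$ bound involving the sum $\tau\sum\|e_c^\ell\|^2$. It is not of optimal order as written (it carries $\tau^2$, not $\tau^2+$ past errors only), so the $\tau^2$ term is acceptable, and the sum is fed back into Gronwall as a cumulative term.
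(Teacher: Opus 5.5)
Your proposal is correct and follows essentially the same route as the paper. Both arguments run an induction that carries the discrete velocity bound (obtained from Lemma \ref{lem:err:be:pu}, the inverse inequality and $\tau=o(h)$) together with $|\varepsilon^\ell|\le\eta$; you derive the latter from an induction hypothesis on $e_c$, which is an equivalent formulation. They then establish unique solvability of the prediction step by coercivity under $\tau\le\tau_*$, $h\le h_*$, test \eqref{err:be:3} with $\widehat e_\eta^n$, invoke Lemma \ref{lem:be:ec}, and recover $e_c^n$ from $\|e_c^n+[\alpha c]^n\|_{{\rm M},\phi}$ via the second bound of Lemma \ref{lem:alpha:be}. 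The only difference is that you merge the paper's two successive Gronwall steps into a single combined one. That is actually the cleaner way to absorb the $K\|e_c^{\ell-1}\|_{{\rm M},\phi}^2$ term on the right-hand side of the summed energy inequality.
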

\begin{proof} We will establish the error estimates by the mathematical induction 
\begin{equation}\label{assum:be}
	\| \bm{U}^{\ell} \|_{\infty} \leq K_{e} + 1, \quad 
	| \varepsilon^{\ell} | \leq \eta.
\end{equation}
To begin, for sufficiently small $\tau$ and $h$ satisfying the condition \eqref{cond:be}, the inverse inequality together with Corollary \ref{col:err:be:u1}, \eqref{alpha:be:3}, and the fact that $e_{c}^{0}=0$, we obtain 
\begin{equation*}\label{bound:u}
  \begin{aligned}
    &\| \bm{U}^{1}\|_{\infty} \leq \| \bm{u}^{1}\|_{\infty} + \| \bm{e}_{\bm{u}}^{1}\|_{\infty} \leq K_{e} + Kh^{-1}(\tau  + h^2)  \leq K_{e} + 1,\\
    &| \varepsilon^{1} | \leq K\|e_{c}^{0}\|_{{\rm M},\phi} + K  \big(\tau^{2} +  \tau h^{2} \big)   \leq K\big(\tau^{2} +  h^{2} \big)  \leq \eta.
  \end{aligned}
\end{equation*}

Now, assume that \eqref{assum:be} holds for all $\ell \leq n$. As noted in Remark \ref{rem:hatec:be}, this assumption implies that the estimates \eqref{err:truc:be2} and \eqref{aux:proj_err:be} are valid. Next, we shall first establish the desired estimate by following the proof of Lemma \ref{lem:err:be:pu}, which consists of two parts: existence and uniqueness, and the error estimate; Subsequently, we complete the induction for $\ell=n+1$ in Part 3.

\medskip
\textbf{Part 1: Existence and uniqueness.} On the one hand, Lemma \ref{lem:err:be:pu} establishes the unique solvability and error estimate for the system \eqref{scm:be:u}. On the other hand, since the correction step in Step 3 is a convex optimization problem, it follows from \eqref{be:lm} in Remark \ref{rem:lm:be} that, for a given $\widehat{C}_{h}^{n}$, the numerical concentration $C_{h}^{n}$ exists and is unique. Therefore, it remains only to prove the existence and uniqueness of the intermediate concentration $\widehat{C}_{h}^{n}$ that satisfies the linear system \eqref{scm:be:c}. 

Analogous to Lemma \ref{lem:err:be:pu}, it suffices to show that the corresponding homogeneous system of \eqref{scm:be:c} admits only the trivial solution $w_{h} \in \mal{C}$:
\begin{equation*}
	\phi w_{h} + \tau \nabla_{h} \cdot \left( \bm{\mal{B}}(w_{h}) \bm{U}_{h}^{n} - \bm{\mal{D}}(\bm{U}_{h}^{n}) \nabla_{h} w_{h}  \right) - \tau q_{P}^{n} w_{h} = 0.
\end{equation*}	
Taking the inner product of the above equation with $w_{h}$ yields
\begin{equation}\label{exit:be:c1}
	\left( \phi w_{h}, w_{h} \right)_{\rm M} + \tau \left( \nabla_{h} \cdot \left( \bm{\mal{B}}(w_{h}) \bm{U}_{h}^{n} - \bm{\mal{D}}(\bm{U}_{h}^{n}) \nabla_{h} w_{h} \right), w_{h} \right)_{\rm M} - \tau \left( q_{P}^{n} w_{h}, w_{h} \right)_{\rm M} 
    = 0.
\end{equation}
For the second term on the left-hand side of \eqref{exit:be:c1}, by applying Lemma \ref{lem:dis}, assumption (A2), and the boundedness of $\bm{U}_{h}^{n}$ from \eqref{assum:be}, we have
\begin{equation}\label{exit:be:c2}
	\begin{aligned}
		\left( \nabla_{h} \cdot \left( \bm{\mal{B}}(w_{h}) \bm{U}_{h}^{n} - \bm{\mal{D}}(\bm{U}_{h}^{n}) \nabla_{h} w_{h} \right), w_{h} \right)_{\rm M} 
		& = - \left( \bm{\mal{B}}(w_{h}) \bm{U}_{h}^{n} - \bm{\mal{D}}(\bm{U}_{h}^{n}) \nabla_{h} w_{h}, \nabla_{h} w_{h} \right)_{\rm T} \\
		& \geq \f{1}{2} \|\nabla_{h} w_{h}\|_{{\rm T},\bm{\mal{D}}(\bm{U}_{h}^{n})}^{2} -  \f{2(K_{e}+1)^{2}}{\phi_{*} D_{*} } \left( 1 + \f{K_{e}+1}{2D_{*}} h \right)   \|w_{h}\|_{{\rm M},\phi}^{2}.
	\end{aligned}
\end{equation}
Therefore, for $\tau \leq \tau_{*}$ and $h\leq h_{*}$, we obtain from \eqref{exit:be:c1}--\eqref{exit:be:c2} and the fact $q_P^{n}<0$ that 
\begin{equation*}
    \|w_{h}\|_{{\rm M},\phi}^{2} + \tau\|\nabla_{h} w_{h}\|_{{\rm T},\bm{\mal{D}}(\bm{U}_{h}^{n})}^{2} \leq 0,
\end{equation*}
which yields $w_{h} \equiv 0$. Hence, the solution triple of the BE-BCFD-P scheme \eqref{scm:be:u}--\eqref{scm:be:min} is guaranteed to be uniquely exist.	 

\medskip
\textbf{Part 2: Error estimate.} Taking the inner product of the error equation \eqref{err:be:3} with $\widehat{e}_{\eta}^{n}$ in the sense of $(\cdot,\cdot)_{\rm M}$ gives us
\begin{equation}\label{errc:be:1}
    \begin{aligned}
	& \left( \phi D_{\tau} \widehat{e}_{\eta}^{n}, \widehat{e}_{\eta}^{n} \right)_{\rm M} 
	+ \left( \nabla_{h} \cdot [ \bm{\mal{B}}(\widehat{e}_{\eta})\bm{U}_{h} - \bm{\mal{D}}(\bm{U}_{h}) \nabla_{h}\widehat{e}_{\eta} + \bm{e}_{\bm{u}}c 
        + \bm{\epsilon}_{\bm{3}} - \bm{\zeta}_{\bm{1}} ]^{n}, \widehat{e}_{\eta}^{n} \right)_{\rm M}  \\
       & \quad - \left( [q_{P}\widehat{e}_{\eta}]^{n}, \widehat{e}_{\eta}^{n} \right)_{\rm M} = \left(  \epsilon_{4}^{n} + \zeta_{2}^{n}, \widehat{e}_{\eta}^{n} \right)_{\rm M} 
		 \leq K \| \widehat{e}_{\eta}^{n} \|_{{\rm M},\beta^{n}}^{2} + K\| e_{c}^{n-1} \|_{{\rm M},\phi}^{2} + K\big( \tau^{2} + h^{4} \big),
	\end{aligned}
\end{equation}
where the truncation error estimates in \eqref{err:truc:be1} have been used.

Invoking Lemma \ref{lem:dis}, the second term on the left-hand side of \eqref{errc:be:1} can be written as
\begin{equation*}
  \begin{aligned}
   & \left( \nabla_{h} \cdot [ \bm{\mal{B}}(\widehat{e}_{\eta})\bm{U}_{h} - \bm{\mal{D}}(\bm{U}_{h}) \nabla_{h}\widehat{e}_{\eta} + \bm{e}_{\bm{u}}c + \bm{\epsilon}_{\bm{3}} - \bm{\zeta}_{\bm{1}} ]^{n}, \widehat{e}_{\eta}^{n} \right)_{\rm M} \\
   &\quad = - \left(  [ \bm{\mal{B}}(\widehat{e}_{\eta})\bm{U}_{h} - \bm{\mal{D}}(\bm{U}_{h}) \nabla_{h}\widehat{e}_{\eta} + \bm{e}_{\bm{u}}c + \bm{\epsilon}_{\bm{3}} - \bm{\zeta}_{\bm{1}} ]^{n}, \nabla_{h} \widehat{e}_{\eta}^{n} \right)_{\rm T}\\
   & \quad = \left(  \bm{\mal{D}}(\bm{U}_{h}^{n}) \nabla_{h} \widehat{e}_{\eta}^{n} ,  \nabla_{h} \widehat{e}_{\eta}^{n} \right)_{\rm T} - I_{3}
   =   \|\nabla_{h} \widehat{e}_{\eta}^{n}\|_{{\rm T},\bm{\mal{D}}(\bm{U}_{h}^{n})}^{2} - I_{3}.
   \end{aligned}
\end{equation*}
Here
$
I_{3}:= \left(  [  \bm{\mal{B}}(\widehat{e}_{\eta})\bm{U}_{h} + \bm{e}_{\bm{u}}c + \bm{\epsilon}_{\bm{3}} - \bm{\zeta}_{\bm{1}} ]^{n}, \nabla_{h} \widehat{e}_{\eta}^{n} \right)_{\rm T}
$
can be bounded as follows:
\begin{equation*}
	\begin{aligned}
		I_{3} 
		&\leq \f{1}{2} \|\nabla_{h} \widehat{e}_{\eta}^{n}\|_{{\rm T},\bm{\mal{D}}(\bm{U}_{h}^{n})}^{2}  
		+ K\|\bm{e}_{\bm{u}}^{n}\|_{\rm T}^{2} + K\| \widehat{e}_{\eta}^{n}\|_{{\rm M},\beta^{n}}^{2} +  K\| e_{c}^{n-1} \|_{{\rm M},\phi}^{2} + K \big( \tau^{4} + h^{4} \big),
	\end{aligned}
\end{equation*}
where we have used the boundedness result \eqref{assum:be} and the estimate \eqref{err:truc:be2}.  
Therefore, substituting the above estimates into \eqref{errc:be:1} and applying Lemma \ref{lem:err:be:pu} yields
\begin{equation}\label{errc:be:2}
   \begin{aligned}
   &  \left( \phi D_{\tau} \widehat{e}_{\eta}^{n}, \widehat{e}_{\eta}^{n} \right)_{\rm M}  
   - \left( [q_{P}\widehat{e}_{\eta}]^{n}, \widehat{e}_{\eta}^{n} \right)_{\rm M} 
   \leq  K\| \widehat{e}_{\eta}^{n}\|_{{\rm M},\beta^{n}}^{2} 
     +  K\| e_{c}^{n-1}\|_{{\rm M},\phi}^{2} 
     + K \bigl( \tau^{2} + h^{4} \bigr).
  \end{aligned}
\end{equation}
Thus, adding \eqref{errc:be:2} with the estimate in Lemma \ref{lem:be:ec}, 
we have
\begin{equation}\label{errc:be:4}
   \begin{aligned}
     &  \f{1}{4}d_{\tau} 
     \Bigl[\|\widehat{e}_{\eta}^{n}\|_{{\rm M},\beta^{n}}^{2} 
         +\|e_{c}^{n}\|_{{\rm M},\phi}^{2} 
         + \|[\alpha c]^{n}\|_{{\rm M},\phi}^{2}  
         + 2\big( \phi e_{c}^{n}, [\alpha c]^{n} \big)_{\rm M}\Bigr] 
         \\
     & \quad\leq K \|\widehat{e}_{\eta}^{n}\|_{{\rm M},\beta^{n}}^{2} + K \|e_{c}^{n-1}\|_{{\rm M},\phi}^{2} + K \big( \tau^{2} + h^{4} \big).
   \end{aligned}
\end{equation}

Now, multiplying both sides of \eqref{errc:be:4} by $4\tau$, replacing the index $n$ by $\ell$, and summing the resulting inequality over $\ell$ from $1$ to $n$. A straightforward application of Gronwall's inequality then yields an error estimate for $\|\widehat{e}_{\eta}^{n}\|_{{\rm M},\beta^{n}}$:
\begin{equation}\label{errc:be:5}
  \begin{aligned}
      \|\widehat{e}_{\eta}^{n}\|_{{\rm M},\beta^{n}}^{2}  
       \le \|\widehat{e}_{\eta}^{n}\|_{{\rm M},\beta^{n}}^{2}  + \| e_{c}^{n} \|_{{\rm M},\phi}^{2} + \| [\alpha c]^{n} \|_{{\rm M},\phi}^{2} + 2 \left( \phi e_{c}^{n}, [\alpha c]^{n} \right)_{\rm M}  
       \leq K \big( \tau^{2} + h^{4} \big),   
  \end{aligned}
\end{equation}
where the first inequality relies on the observation that 
$$
 \| e_{c}^{n} \|_{{\rm M},\phi}^{2} + \| [\alpha c]^{n} \|_{{\rm M},\phi}^{2} + 2 \left( \phi e_{c}^{n}, [\alpha c]^{n} \right)_{\rm M} \ge 0.
$$ 
Nevertheless, this estimate does not provide a corresponding bound for $\|e_{c}^{n}\|_{{\rm M},\phi}$,  due to the indefinite sign of the inner product term $\big( \phi e_{c}^{n}, [\alpha c]^{n} \big)_{\rm M}$.
However, note that
\begin{equation*}
    \begin{aligned}
        - 2\left( \phi e_{c}^{n}, [\alpha c]^{n} \right)_{\rm M}  \leq  \f{1}{2} \| e_{c}^{n} \|_{{\rm M},\phi}^{2} + 2 \| [\alpha c]^{n} \|_{{\rm M},\phi}^{2}.
    \end{aligned}
\end{equation*}
Adding this to \eqref{errc:be:5} and invoking Lemma \ref{lem:alpha:be}, we arrive at
\begin{equation}\label{errc:be:6}
    \begin{aligned}
         \| e_{c}^{n} \|_{{\rm M},\phi}^{2} \leq  2 \| [\alpha c]^{n} \|_{{\rm M},\phi}^{2}  + K \big( \tau^{2} + h^{4} \big) 
         \leq K \tau \sum_{\ell=1}^{n-1} \|e_{c}^{\ell}\|_{{\rm M},\phi}^{2} + K \big( \tau^{2} + h^{4} \big). 
    \end{aligned}
\end{equation}
Then, a second application of Gronwall's inequality to \eqref{errc:be:6} gives the desired estimate
\begin{equation}\label{errc:be:7}
   \| e_{c}^{n} \|_{{\rm M},\phi}^{2}
   \leq K \big( \tau^{2} + h^{4} \big). 
\end{equation}

Finally, combining the estimate \eqref{errc:be:7} with Lemma \ref{lem:err:be:pu} yields
\begin{equation}\label{errc:be:8}
	\|e_{p}^{n+1}\|_{\rm M}^{2} + \|\nabla_{h} e_{p}^{n+1}\|_{{\rm T}}^{2} + \| \bm{e}_{\bm{u}}^{n+1} \|_{\rm T}^{2} 
	\leq  K\| e_{c}^{n} \|_{{\rm M},\phi}^{2} + K\big( \tau^{2}  +  h^{4}\big)
	\leq K\big( \tau^{2}  +  h^{4}\big).
\end{equation}

\medskip
\textbf{Part 3: Completion of the mathematical induction.}
We end the proof of this theorem by completing the assumption \eqref{assum:be} for $\ell=n+1$. Following \eqref{errc:be:7}--\eqref{errc:be:8}, the inverse inequality, and Lemma \ref{lem:alpha:be}, we obtain
$$
\begin{aligned}
&\| \bm{U}^{n+1}\|_{\infty} \leq \| \bm{u}^{n+1}\|_{\infty} + \| \bm{e}_{\bm{u}}^{n+1}\|_{\infty} \leq K_{e} + Kh^{-1}(\tau  + h^2)  \leq K_{e} + 1, \\
&|\varepsilon^{n+1}| \leq K\|e_{c}^{n}\|_{{\rm M},\phi} + K\big(\tau^{2} + h^{2} \big) \leq K\big( \tau + \tau^{2} + h^{2} \big) \leq \eta,
\end{aligned}
$$
provided that $\tau$ and $h$ are sufficiently small and satisfy the condition \eqref{cond:be}. Therefore,
this implies that \eqref{assum:be} indeed holds for $\ell=n+1$.
\end{proof}
\begin{remark} The conclusion of Theorem \ref{thm:err:be} further implies that the auxiliary solution $c_{\eta}^{n}$ is indeed a small perturbation of the exact solution $c^{n}$, and their difference shares the same order of accuracy as the concentration error. In fact, Lemma \ref{lem:alpha:be} yields
$$
|c_{\eta}^{n} - c^{n}| = |[\alpha c]^{n}| \leq |\varepsilon^{n}| \leq K \big( \tau + h^{2} \big), \quad
|d_{\tau} \varepsilon^{n}| \leq K \big( \tau + h^{2} \big),
$$ 
which is fully consistent with the induction hypothesis $|\varepsilon^{n}| \leq \eta < 1/2$ and thereby justifies the statements in Remark \ref{rem:hatec:be}.
\end{remark}

Finally, the main stability result of the BE-BCFD-P scheme \eqref{scm:be:u}--\eqref{scm:be:min} is stated as follows.
\begin{theorem}\label{thm:stb:be} Under the assumptions of Theorem \ref{thm:err:be}, there exists a positive constant $K$, independent of $h$ and $\tau$, such that, for all $n=1,\ldots,N_t$, the following stability estimates hold
$$
\begin{aligned}
    \|P_{h}^{n}\|_{\rm M}^{2}  + \|\nabla_{h} P_{h}^{n}\|_{\rm T}^{2} 
        + \| \bm{U}_{h}^{n} \|_{\rm T}^{2} 
     \leq   K\|q^{n} \|_{\rm M}^{2},\quad
    \|C_{h}^{n}\|_{\rm M}^{2}  \leq  K\| c^{o} \|_{\rm M}^{2} 
       + K \tau \sum_{\ell=1}^{n} \|[q_{I}c_{I}]^{\ell}\|_{\rm M}^{2}.
\end{aligned}
$$
\end{theorem}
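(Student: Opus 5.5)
The proof splits into two independent parts: the velocity–pressure bound, which is a pure elliptic estimate for \eqref{scm:be:u}, and the concentration bound, which combines an energy estimate for the prediction step \eqref{scm:be:c} with the stability of the projection in Lemma \ref{lem:hat:be}.

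\textbf{Velocity and pressure.} Take the inner product of the first equation of \eqref{scm:be:u} with $P_h^n$ in $(\cdot,\cdot)_{\rm M}$ and of the second with $\bm{U}_h^n$ in $(\cdot,\cdot)_{\rm T}$, then add. By Lemma \ref{lem:dis} the coupling terms cancel, leaving
$$
(\bm{\mal{A}}(C_h^{n-1})\bm{U}_h^n,\bm{U}_h^n)_{\rm T}=(q^n,P_h^n)_{\rm M}.
$$
Because $C_h^{n-1}\in[0,1]$ by the projection, \eqref{assump:a} applies and the left side is at least $a_*\|\bm{U}_h^n\|_{\rm T}^2$. The second equation gives $\|\nabla_h P_h^n\|_{\rm T}\le a^*\|\bm{U}_h^n\|_{\rm T}$. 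Since $P_{1,1}^n=0$, Lemma \ref{lem:seminorm} gives $\|P_h^n\|_{\rm M}\le K\|\nabla_hP_h^n\|_{\rm T}\le K\|\bm{U}_h^n\|_{\rm T}$. Hence $a_*\|\bm{U}_h^n\|_{\rm T}^2\le K\|q^n\|_{\rm M}\|\bm{U}_h^n\|_{\rm T}$, so $\|\bm{U}_h^n\|_{\rm T}\le K\|q^n\|_{\rm M}$, and the bounds on $P_h^n$ and $\nabla_hP_h^n$ follow.

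\textbf{Concentration.} Take the inner product of \eqref{scm:be:c} with $\widehat{C}_h^n$. The time-derivative term satisfies
$$
(\phi(\widehat{C}_h^n-C_h^{n-1}),\widehat{C}_h^n)_{\rm M}\ge \tfrac12\big(\|\widehat{C}_h^n\|_{{\rm M},\phi}^2-\|C_h^{n-1}\|_{{\rm M},\phi}^2\big),
$$
and the source term gives $-\tau(q_P^n\widehat{C}_h^n,\widehat{C}_h^n)_{\rm M}\ge0$, since $q_P<0$. For the flux term, argue exactly as in \eqref{exit:be:c2}. The uniform bound $\|\bm{U}_h^n\|_\infty\le K_e+1$ from the induction in Theorem \ref{thm:err:be} holds under its assumptions, and $\tau\le\tau_*$, $h\le h_*$. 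Together these give a lower bound $\frac12\|\nabla_h\widehat{C}_h^n\|^2_{{\rm T},\bm{\mal{D}}}-K\|\widehat{C}_h^n\|^2_{{\rm M},\phi}$. The right side is bounded by Cauchy–Schwarz and Young by $K\|[q_Ic_I]^n\|_{\rm M}^2+\|\widehat{C}_h^n\|^2_{{\rm M},\phi}$. The result is
$$
\|\widehat{C}_h^n\|_{{\rm M},\phi}^2\le \|C_h^{n-1}\|_{{\rm M},\phi}^2+K\tau\|\widehat{C}_h^n\|_{{\rm M},\phi}^2+K\tau\|[q_Ic_I]^n\|_{\rm M}^2 .
$$

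\textbf{Linking to the projected concentration.} Lemma \ref{lem:hat:be} gives $\|C_h^n\|_{{\rm M},\beta^n}\le\|\widehat{C}_h^n\|_{{\rm M},\beta^n}$. Since $\beta^n=\phi-\tau q_P^n$, we have $\|\widehat{C}_h^n\|^2_{{\rm M},\beta^n}\le\|\widehat{C}_h^n\|^2_{{\rm M},\phi}+\tau K_p\|\widehat{C}_h^n\|^2_{\rm M}$, while $\|C_h^n\|_{{\rm M},\phi}\le\|C_h^n\|_{{\rm M},\beta^n}$. Combining these with the previous inequality gives
$$
(1-K\tau)\|C_h^n\|^2_{{\rm M},\phi}\le\|C_h^{n-1}\|^2_{{\rm M},\phi}+K\tau\|[q_Ic_I]^n\|^2_{\rm M}.
$$
Summing over $n$ and applying the discrete Gronwall inequality, with $C_h^0=c^o$ and the norm equivalence from $\phi_*\le\phi\le\phi^*$, yields the claimed estimate.

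\textbf{Main obstacle.} The delicate point is the convection term, which is not sign-definite. It requires the $L^\infty$ bound on $\bm{U}_h^n$, and this bound is available only through the error estimate of Theorem \ref{thm:err:be}, which is why the statement assumes that theorem's hypotheses. The weight mismatch between $\beta^n$ and $\phi$ across time levels is a secondary issue; it costs only an $O(\tau)$ factor, which Gronwall absorbs.
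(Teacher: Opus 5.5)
Your proof is correct and follows the paper's argument: the same energy identity for \eqref{scm:be:u}, the same test of \eqref{scm:be:c} with $\widehat{C}_h^n$ using the bound $\|\bm{U}_h^n\|_\infty\le K_e+1$ from the induction in Theorem~\ref{thm:err:be}, and Lemma~\ref{lem:hat:be} to pass from $\widehat{C}_h^n$ to $C_h^n$. The only difference is bookkeeping: the paper rewrites the estimate in $\beta^n$-weighted norms and telescopes $\|\widehat{C}_h^n\|_{{\rm M},\beta^n}^2+\|C_h^n\|_{{\rm M},\beta^n}^2$, whereas you stay in the $\phi$-weighted norm and let Gronwall absorb the $O(\tau)$ weight mismatch, which is equally valid.
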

\begin{proof} The proof is divided into two parts. The first part is devoted to establishing stability estimates for the velocity and pressure, while the second part provides the stability result for the concentration.

\medskip
\textbf{Part 1: Stability for the velocity-pressure.}
Taking the inner product of the two equations in \eqref{scm:be:u} with $P_{h}^{n}$ and $\bm{U}_{h}^{n}$ in the sense of $(\cdot,\cdot)_{\rm M}$ and $(\cdot,\cdot)_{\rm T}$, respectively, and adding the resulting identities, yields
\begin{equation}\label{stb:be:u1} 
	\left(  \bm{\mal{A}}(C_{h}^{n-1}) \bm{U}_{h}^{n}, \bm{U}_{h}^{n} \right)_{\rm T} = \left( q^{n}, P_{h}^{n} \right)_{\rm M},
\end{equation}
where Lemma \ref{lem:dis} is invoked, i.e.,
$$
\left( \nabla_{h} \cdot \bm{U}_{h}^{n}, P_{h}^{n} \right)_{\rm M} +  \left(  \nabla_{h} P_{h}^{n}, \bm{U}_{h}^{n} \right)_{\rm T} = - \left( \bm{U}_{h}^{n}, \nabla_{h} P_{h}^{n} \right)_{\rm T} + \left(  \nabla_{h} P_{h}^{n}, \bm{U}_{h}^{n} \right)_{\rm T}  = 0.
$$
Furthermore, applying the Cauchy-Schwarz inequality, Lemma \ref{lem:seminorm}, and \eqref{assump:a} to \eqref{stb:be:u1} yields the estimate
\begin{equation}\label{stb:be:u3}
    a_{*}\| \bm{U}_{h}^{n}\|_{\rm T}^{2} \leq
    \left(  \bm{\mal{A}}(C_{h}^{n-1}) \bm{U}_{h}^{n}, \bm{U}_{h}^{n} \right)_{\rm T} 
    \leq \|q^{n}\|_{\rm M} \| P_{h}^{n} \|_{\rm M}
    \leq K\|q^{n}\|_{\rm M} \| \nabla_{h} P_{h}^{n} \|_{\rm T}.
\end{equation}
Moreover, the second equation in \eqref{scm:be:u} gives
\begin{equation}\label{stb:be:u2}
	\begin{aligned}
		\| \nabla_{h} P_{h}^{n} \|_{\rm T} = \| \bm{\mal{A}}(C_{h}^{n-1}) \bm{U}_{h}^{n} \|_{\rm T} \leq a^{*} \| \bm{U}_{h}^{n} \|_{\rm T}.
	\end{aligned}
\end{equation}
Therefore,  combining \eqref{stb:be:u3}, \eqref{stb:be:u2}, and Lemma \ref{lem:seminorm} yield the estimate
\begin{equation*}
	\|P_{h}^{n}\|_{\rm M} + \|\nabla_{h}P_{h}^{n}\|_{\rm T} 
    + \| \bm{U}_{h}^{n}\|_{\rm T}
	\leq K \| \bm{U}_{h}^{n}\|_{\rm T} 
    \leq K\|q^{n}\|_{\rm M}.
\end{equation*}

\medskip
\textbf{Part 2: Stability for the concentration.} As in \eqref{errc:be:1}, taking the inner product of \eqref{scm:be:c} with $\widehat{C}_{h}^{n}$ in the sense of $(\cdot,\cdot)_{\rm M}$ results in
\begin{equation}\label{stb:be:c1}
	\begin{aligned}
		  \big( \phi D_{\tau} \widehat{C}_{h}^{n}, \widehat{C}_{h}^{n} \big)_{\rm M} 
		& + \big( \nabla_{h} \cdot [  \bm{\mal{B}}(\widehat{C}_{h})\bm{U}_{h} - \bm{\mal{D}}(\bm{U}_{h}^{n}) \nabla_{h}\widehat{C}_{h} ]^{n}, \widehat{C}_{h}^{n} \big)_{\rm M} \\
        & - \big( [q_{P}\widehat{C}_{h}]^{n}, \widehat{C}_{h}^{n} \big)_{\rm M} = \big(  [q_{I}c_{I}]^{n}, \widehat{C}_{h}^{n} \big)_{\rm M} \leq K\| \widehat{C}_{h}^{n}\|_{{\rm M},\beta^{n}}^{2} + K\| [q_{I}c_{I}]^{n}\|_{\rm M}^{2}.
	\end{aligned}
\end{equation}

Note that $q_{P}^{n}$, $q_{P}^{n-1} < 0$ and Lemma \ref{lem:hat:be}, the first and third terms on the left-hand side of \eqref{stb:be:c1} can be estimated as
\begin{equation*}
    \begin{aligned}
        &  \big( \phi D_{\tau} \widehat{C}_{h}^{n}, \widehat{C}_{h}^{n} \big)_{\rm M} - \big( [q_{P}\widehat{C}_{h}]^{n}, \widehat{C}_{h}^{n} \big)_{\rm M}\\
         & \quad \ge \f{1}{2\tau} \Big( \| \widehat{C}_{h}^{n} \|_{{\rm M},\phi}^{2} 
         - \| C_{h}^{n-1} \|_{{\rm M},\phi}^{2} \Big) - \big( [q_{P}\widehat{C}_{h}]^{n}, \widehat{C}_{h}^{n} \big)_{\rm M}\\
        &  \quad = \f{1}{2\tau} \Big( \| \widehat{C}_{h}^{n} \|_{{\rm M},\beta^{n}}^{2} 
        - \| C_{h}^{n-1} \|_{{\rm M},\beta^{n-1}}^{2} \Big) 
        - \f{1}{2} \big( [q_{P}\widehat{C}_{h}]^{n}, \widehat{C}_{h}^{n} \big)_{\rm M} - \f{1}{2} \big( [q_{P}C_{h}]^{n-1}, C_{h}^{n-1} \big)_{\rm M}\\
        & \quad \geq \f{1}{4\tau} \Big( \| \widehat{C}_{h}^{n} \|_{{\rm M},\beta^{n}}^{2} + \| C_{h}^{n} \|_{{\rm M},\beta^{n}}^{2}  - \| \widehat{C}_{h}^{n-1} \|_{{\rm M},\beta^{n-1}}^{2}  -  \| C_{h}^{n-1} \|_{{\rm M},\beta^{n-1}}^{2} \Big).
    \end{aligned}
\end{equation*}
Next, analogous to \eqref{exit:be:c2}, the second left term of \eqref{stb:be:c1} admits the following estimate
\begin{equation*}
	\begin{aligned}
		   \big( \nabla_{h} \cdot [  \bm{\mal{B}}(\widehat{C}_{h})\bm{U}_{h} - D \nabla_{h}\widehat{C}_{h} ]^{n}, \widehat{C}_{h}^{n} \big)_{\rm M} 
		& = - \big(  [  \bm{\mal{B}}(\widehat{C}_{h})\bm{U}_{h} - \bm{\mal{D}}(\bm{U}_{h}^{n}) \nabla_{h}\widehat{C}_{h} ]^{n}, \nabla_{h}\widehat{C}_{h}^{n} \big)_{\rm T}\\
		  & \geq \f{1}{2} \|\nabla_{h} \widehat{C}_{h}^{n}\|_{{\rm T},\bm{\mal{D}}(\bm{U}_{h}^{n})}^{2} - K\|\widehat{C}_{h}\|_{{\rm M},\beta^{n}}^{2}.
	\end{aligned}
\end{equation*}
Therefore, substituting the above two estimates into \eqref{stb:be:c1}, we get
\begin{equation}\label{stb:be:c2}
	\begin{aligned}
		  \f{1}{4} d_{\tau} \bigl[\| \widehat{C}_{h}^{n} \|_{{\rm M},\beta^{n}}^{2} +  \| C_{h}^{n} \|_{{\rm M},\beta^{n}}^{2} \bigr]
		   + \f{1}{2} \|\nabla_{h} \widehat{C}_{h}^{n}\|_{{\rm T},\bm{\mal{D}}(\bm{U}_{h}^{n})}^{2} \leq K\| \widehat{C}_{h}^{n}\|_{{\rm M},\beta^{n}}^{2} + K\| [q_{I}c_{I}]^{n}\|_{\rm M}^{2}. 
	\end{aligned}
\end{equation}
Furthermore, multiplying both sides of \eqref{stb:be:c2} by $4\tau$, with $n$ replaced by $\ell$, and summing from $1$ to $n$, the application of Gronwall's inequality yields
\begin{equation*}
	\| \widehat{C}_{h}^{n} \|_{{\rm M},\beta^{n}}^{2} + \| C_{h}^{n} \|_{{\rm M},\beta^{n}}^{2}  + \tau \sum_{\ell=1}^{n} \|\nabla_{h} \widehat{C}_{h}^{\ell}\|_{{\rm T},\bm{\mal{D}}(\bm{U}_{h}^{\ell})}^{2} 
	\leq K\| C_{h}^{0} \|_{{\rm M}}^{2}  +  K \tau \sum_{\ell=1}^{n} \| [q_{I}c_{I}]^{n}\|_{\rm M}^{2}.
\end{equation*}
Finally, combining the stability results from both parts yields the desired conclusion.
\end{proof}

\subsection{Analysis for the second-order scheme}\label{subsec:2ndAnal}
Following the analysis developed in Subsection \ref{subsec:1stAnal}, analogous convergence and stability results can be established for the second-order scheme. To avoid repetition, we state only the final results and omit the detailed proofs unless they are required.

First, similar to Lemmas \ref{lem:lm:be}--\ref{lem:hatec:be}, we can obtain the following theoretical results for the CN-BCFD-P scheme \eqref{scm:cn:c}--\eqref{scm:cn:u}.
\begin{lemma}[Orthogonality of the weighted $L^2$-projection]\label{lem:lm:cn}
The three Lagrange multipliers $\lambda_h^{n}$, $\xi_h^{n} \in \mal{C}$ and $\gamma^{n} \in \mathbb{R}$ in \eqref{scm:cn:k1}--\eqref{scm:cn:k3} satisfy
$$
\big( \lambda_{h}^{n} - \xi_{h}^{n} - \gamma^{n} \bar{\beta}^{n}, 1 \big)_{\rm M} = 0,\quad n = 1,\ldots,N_{t}.
$$
\end{lemma}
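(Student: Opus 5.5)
The argument will mirror the proof of Lemma \ref{lem:lm:be}, with $\beta^{n}$ replaced by $\bar{\beta}^{n}=\phi-\tau q_{P}^{n}/2$. The statement depends only on the KKT characterization \eqref{scm:cn:k1}--\eqref{scm:cn:k2} of the correction step \eqref{scm:cn:min}. It does not use the prediction step \eqref{scm:cn:c} or the velocity-pressure update \eqref{scm:cn:u}.

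\textbf{Step 1.} Take the discrete inner product $(\cdot,1)_{\rm M}$ of the stationarity condition \eqref{scm:cn:k1}. Since $\gamma^{n}\in\mathbb{R}$ is a scalar and $(\cdot,\cdot)_{\rm M}$ is bilinear, this gives
$$
\big(\bar{\beta}^{n}C_{h}^{n},1\big)_{\rm M}=\big(\bar{\beta}^{n}\widehat{C}_{h}^{n},1\big)_{\rm M}+\big(\lambda_{h}^{n}-\xi_{h}^{n}-\gamma^{n}\bar{\beta}^{n},1\big)_{\rm M}.
$$

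\textbf{Step 2.} By \eqref{scm:cn:k2}, $\bar{G}(C_{h}^{n})=0$, so $\big(\bar{\beta}^{n}C_{h}^{n},1\big)_{\rm M}=\big(\bar{\beta}^{n}\widehat{C}_{h}^{n},1\big)_{\rm M}$. Subtracting this from the identity in Step 1 shows that $\big(\lambda_{h}^{n}-\xi_{h}^{n}-\gamma^{n}\bar{\beta}^{n},1\big)_{\rm M}=0$ for each $n=1,\ldots,N_{t}$.

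\textbf{Main obstacle.} There is essentially none; the result is a two-line computation. The only point worth checking is that the KKT system \eqref{scm:cn:k1}--\eqref{scm:cn:k3} really characterizes the minimizer of \eqref{scm:cn:min}. This holds because the objective is strictly convex, the constraints are linear, and the constraint set is nonempty (for instance, the argument of Remark \ref{rem:lm:be} applies with $\bar{\beta}^{n}$ in place of $\beta^{n}$). Hence the multipliers $\lambda_{h}^{n}$, $\xi_{h}^{n}$ and $\gamma^{n}$ exist and the identities used above are legitimate.
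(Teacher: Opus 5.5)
Your proof is correct and is the argument the paper uses: the paper proves the first-order analogue, Lemma \ref{lem:lm:be}, by pairing the stationarity condition \eqref{scm:be:k1} with $1$ and invoking the mass constraint \eqref{scm:be:k2}, and states that the second-order case follows in the same way with $\beta^{n}$ replaced by $\bar{\beta}^{n}$. Your remark on existence of the multipliers is not needed, since the lemma is stated for multipliers that already satisfy \eqref{scm:cn:k1}--\eqref{scm:cn:k3}.
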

\begin{lemma}[Stability of the weighted $L^2$-projection]\label{lem:hat:cn}
For the intermediate concentration $\widehat{C}_{h}^{n}$ and the projected concentration $C_{h}^{n}$ in the CN-BCFD-P scheme \eqref{scm:cn:c}--\eqref{scm:cn:u}, the following estimate holds
$$
  \| C_{h}^{n} \|_{{\rm M},\bar{\beta}^{n}}^{2} + \| C_{h}^{n}- \widehat{C}_{h}^{n} \|_{{\rm M},\bar{\beta}^{n}}^{2} \leq \| \widehat{C}_{h}^{n} \|_{{\rm M},\bar{\beta}^{n}}^{2},\quad n = 1,\ldots,N_{t}.
$$
\end{lemma}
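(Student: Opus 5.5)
The plan is to mirror the proof of Lemma \ref{lem:hat:be} line by line, with $\beta^{n}$ replaced by $\bar{\beta}^{n}=\phi-\tau q_{P}^{n}/2$ throughout. The only structural facts used there are that the weight is strictly positive and that the KKT conditions have the same form. Both hold here: $\bar{\beta}^{n}\ge\phi_{*}>0$ because $q_{P}^{n}<0$, and \eqref{scm:cn:k1}--\eqref{scm:cn:k3} have exactly the form of \eqref{scm:be:k1}--\eqref{scm:be:k3}.

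\textbf{Step 1: energy identity.} Take the discrete inner product of \eqref{scm:cn:k1} with $C_{h}^{n}$. The identity $2(a-b,a)=\|a\|^{2}+\|a-b\|^{2}-\|b\|^{2}$, applied with the weight $\bar{\beta}^{n}$, gives
$$
\f{1}{2}\Big(\|C_{h}^{n}\|_{{\rm M},\bar{\beta}^{n}}^{2}+\|C_{h}^{n}-\widehat{C}_{h}^{n}\|_{{\rm M},\bar{\beta}^{n}}^{2}-\|\widehat{C}_{h}^{n}\|_{{\rm M},\bar{\beta}^{n}}^{2}\Big)=\big(\lambda_h^{n}-\xi_h^{n}-\gamma^{n}\bar{\beta}^{n},C_{h}^{n}\big)_{\rm M}.
$$
It therefore suffices to show that the right-hand side is nonpositive.

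\textbf{Step 2: eliminate $\gamma^{n}$.} Define $\bar{\varpi}^{n}:=(\bar{\beta}^{n},C_{h}^{n})_{\rm M}/(\bar{\beta}^{n},1)_{\rm M}$. Since $\bar{\beta}^{n}>0$ and $0\le C_{h}^{n}\le1$, we have $\bar{\varpi}^{n}\in[0,1]$. Lemma \ref{lem:lm:cn} gives $\gamma^{n}(\bar{\beta}^{n},1)_{\rm M}=(\lambda_h^{n}-\xi_h^{n},1)_{\rm M}$. Multiplying by $\bar{\varpi}^{n}$ yields
$$
\gamma^{n}(\bar{\beta}^{n},C_{h}^{n})_{\rm M}=\bar{\varpi}^{n}(\lambda_h^{n}-\xi_h^{n},1)_{\rm M}.
$$

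\textbf{Step 3: use complementary slackness.} Rewrite $-(\xi_h^{n},C_{h}^{n})_{\rm M}=(\xi_h^{n},1-C_{h}^{n})_{\rm M}-(\xi_h^{n},1)_{\rm M}$. The complementarity conditions in \eqref{scm:cn:k3} give $(\lambda_h^{n},C_{h}^{n})_{\rm M}=0$ and $(\xi_h^{n},1-C_{h}^{n})_{\rm M}=0$. The right-hand side of Step 1 then reduces to
$$
-\bar{\varpi}^{n}(\lambda_h^{n},1)_{\rm M}+(\bar{\varpi}^{n}-1)(\xi_h^{n},1)_{\rm M}.
$$
This is $\le0$ because $\lambda_h^{n},\xi_h^{n}\ge0$ and $\bar{\varpi}^{n}\in[0,1]$, which completes the argument.

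The only point that needs checking, rather than copying, is the positivity of $\bar{\beta}^{n}$, which gives $\bar{\varpi}^{n}\in[0,1]$ and makes the weighted norm a genuine norm. It follows from $\phi\ge\phi_{*}$ and $q_{P}<0$. Otherwise the argument is a direct transcription of the first-order case, so I expect no real obstacle.
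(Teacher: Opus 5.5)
Your proposal is correct and matches the paper's intended argument. The paper omits the proof of this lemma and refers back to Lemma \ref{lem:hat:be}; you transcribed that proof with $\beta^{n}$ replaced by $\bar{\beta}^{n}$, including the ratio $\bar{\varpi}^{n}\in[0,1]$, the orthogonality relation, and the complementary-slackness step, and you also verified $\bar{\beta}^{n}\ge\phi_{*}>0$, the one ingredient that needs checking.
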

\begin{lemma}\label{lem:hatec:cn}
    Let $\widehat{C}_{h}^{n}$ and $C_{h}^{n}$ be the intermediate concentration and the projected concentration, respectively, arising from the CN‑BCFD‑P scheme \eqref{scm:cn:c}--\eqref{scm:cn:u}. Suppose that $w_{h}^{n} \in \mal{C}$ satisfies
    \begin{equation*}
    0 \leq w_{h}^{n} \leq 1,\quad \big(\bar{\beta}^{n} w_{h}^{n},1 \big)_{\rm M}=\big(\bar{\beta}^{n} \widehat{C}_{h}^{n},1 \big)_{\rm M}.
    \end{equation*}
    Then, for $n=1,\ldots,N_{t}$, the following estimates hold
	$$
	\|w_{h}^{n} - C_{h}^{n}\|_{{\rm M},\bar{\beta}^{n}}^{2} + \|(w_{h}^{n} - C_{h}^{n})-(w_{h}^{n} - \widehat{C}_{h}^{n})\|_{{\rm M},\bar{\beta}^{n}}^{2} \leq \|w_{h}^{n} - \widehat{C}_{h}^{n}\|_{{\rm M},\bar{\beta}^{n}}^{2},
	$$	
    $$
    \big( \bar{\beta}^{n} \big( w_{h}^{n} - C_{h}^{n} \big) - \bar{\beta}^{n} \big( w_{h}^{n} - \widehat{C}_{h}^{n} \big), w_{h}^{n} - C_{h}^{n} \big)_{\rm M} \leq 0.    
    $$
\end{lemma}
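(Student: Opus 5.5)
The argument mirrors that of Lemma \ref{lem:hatec:be}, with $\beta^{n}$ replaced by $\bar{\beta}^{n}=\phi-\tau q_{P}^{n}/2$ and the KKT system \eqref{scm:be:k1}--\eqref{scm:be:k3} replaced by \eqref{scm:cn:k1}--\eqref{scm:cn:k3}. Since $q_{P}^{n}<0$, we have $\bar{\beta}^{n}\ge\phi_{*}>0$, so $\|\cdot\|_{{\rm M},\bar{\beta}^{n}}$ is a genuine weighted norm. First I subtract \eqref{scm:cn:k1} from the trivial identity $\bar{\beta}^{n}w_{h}^{n}=\bar{\beta}^{n}w_{h}^{n}$. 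This gives
$$
\bar{\beta}^{n}\big(w_{h}^{n}-C_{h}^{n}\big)-\bar{\beta}^{n}\big(w_{h}^{n}-\widehat{C}_{h}^{n}\big) = -\lambda_{h}^{n}+\xi_{h}^{n}+\gamma^{n}\bar{\beta}^{n}.
$$

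Next I take the $(\cdot,\cdot)_{\rm M}$ inner product of this identity with $w_{h}^{n}-C_{h}^{n}$. Write $a=w_{h}^{n}-C_{h}^{n}$ and $b=w_{h}^{n}-\widehat{C}_{h}^{n}$. The polarization identity $2(\bar{\beta}^{n}(a-b),a)_{\rm M}=\|a\|^{2}_{{\rm M},\bar{\beta}^{n}}+\|a-b\|^{2}_{{\rm M},\bar{\beta}^{n}}-\|b\|^{2}_{{\rm M},\bar{\beta}^{n}}$ shows that both claimed inequalities reduce to one fact: the right-hand side $(-\lambda_{h}^{n}+\xi_{h}^{n}+\gamma^{n}\bar{\beta}^{n},\,w_{h}^{n}-C_{h}^{n})_{\rm M}$ is nonpositive.

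To prove that, I split the right-hand side into three pieces.

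The $\gamma^{n}$ piece equals $\gamma^{n}\big[(\bar{\beta}^{n}w_{h}^{n},1)_{\rm M}-(\bar{\beta}^{n}C_{h}^{n},1)_{\rm M}\big]$. The hypothesis gives $(\bar{\beta}^{n}w_{h}^{n},1)_{\rm M}=(\bar{\beta}^{n}\widehat{C}_{h}^{n},1)_{\rm M}$, and the constraint \eqref{scm:cn:k2} gives $(\bar{\beta}^{n}C_{h}^{n},1)_{\rm M}=(\bar{\beta}^{n}\widehat{C}_{h}^{n},1)_{\rm M}$. So the bracket vanishes and this piece is zero.

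For the $\lambda$ piece, complementarity $\lambda_{h}^{n}C_{h}^{n}=0$ leaves $-(\lambda_{h}^{n},w_{h}^{n})_{\rm M}$. This is $\le0$ because $\lambda_{h}^{n}\ge0$ and $w_{h}^{n}\ge0$.

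For the $\xi$ piece, I rewrite $w_{h}^{n}-C_{h}^{n}=(w_{h}^{n}-1)+(1-C_{h}^{n})$. Complementarity $\xi_{h}^{n}(1-C_{h}^{n})=0$ then leaves $(\xi_{h}^{n},w_{h}^{n}-1)_{\rm M}$. This is $\le0$ because $\xi_{h}^{n}\ge0$ and $w_{h}^{n}\le1$.

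Adding the three pieces gives the second inequality of the lemma. Multiplying by $2$ and using the polarization identity then gives the first.

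No step is genuinely difficult. The only care needed is to check that the mass constraint carries the same weight $\bar{\beta}^{n}$ as the objective in \eqref{scm:cn:min}; this is what makes the $\gamma^{n}$ term cancel exactly. The positivity of $\bar{\beta}^{n}$ is what makes the first inequality a statement about norms.
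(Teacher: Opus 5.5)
Your proposal is correct and follows essentially the same route as the paper, which proves the first-order analogue (Lemma \ref{lem:hatec:be}) and says the CN case is similar. That proof also derives the identity from the KKT condition, takes the inner product with $w_h^n - C_h^n$, applies the polarization identity, and uses mass-constraint cancellation together with complementarity and sign arguments to obtain $-(\lambda_h^n, w_h^n)_{\rm M} + (\xi_h^n, w_h^n - 1)_{\rm M} \le 0$.
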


Analogous to the first‑order scheme, to present an optimal-order error analysis for the second‑order projection scheme, we also introduce two "biased" error functions as follows:
$$
e_{\eta}^{n}:= \bar{c}_{\eta}^{n} - C_{h}^{n}, \quad \widehat{e}_{\eta}^{n}:= \bar{c}_{\eta}^{n} - \widehat{C}_{h}^{n}, 
$$
where the "biased" auxiliary solution is also introduced as
\begin{equation}\label{c:eta:cn}
    \bar{c}_{\eta}^{n}: = \big( 1 + \bar{\alpha}^{n} \big) c^{n}, 
    \quad 
    \bar{\alpha}^{n}:= \bar{\varepsilon}^{n}\chi(c^{n}),
    \quad
    \bar{\varepsilon}^{n}:= \f{ \big(\bar{\beta}^{n} C_{h}^{n},1 \big)_{\rm M} - \big(\bar{\beta}^{n} c^{n},1 \big)_{\rm M} }{\big(\bar{\beta}^{n} c^{n},\chi(c^{n}) \big)_{\rm M}}.
\end{equation}
As stated in Remark \ref{rem:hatec:be}, provided that $|\bar{\varepsilon}^{n}| \leq \eta < 1/2$, the "biased" auxiliary solution $\bar{c}_{\eta}^{n}$ meets the two conditions of Lemma \ref{lem:hatec:cn}:
$$
0 \leq \bar{c}_{\eta}^{n} \leq 1,
\quad
\big( \bar{\beta}^{n} \bar{c}_{\eta}^{n}, 1 \big)_{\rm M} = \big( \bar{\beta}^{n} \widehat{C}_{h}^{n}, 1 \big)_{\rm M},
$$
and the following projection error estimates hold:
\begin{equation}\label{aux:proj_err:cn}
       \|e_{\eta}^{n}\|_{{\rm M},\bar{\beta}^{n}}^{2} 
       + \|e_{\eta}^{n}-\widehat{e}_{\eta}^{n}\|_{{\rm M},\bar{\beta}^{n}}^{2} 
       \leq \|\widehat{e}_{\eta}^{n}\|_{{\rm M},\bar{\beta}^{n}}^{2},
       \quad 
     \big( \bar{\beta}^{n} \big(e_{\eta}^{n}- \widehat{e}_{\eta}^{n}\big), e_{\eta}^{n} \big)_{\rm M} \leq 0,
\end{equation}
In addition, following the proof of Lemma \ref{lem:alpha:be}, for $\bar{\varepsilon}^{n}$ defined in \eqref{c:eta:cn}, we can show the following key conclusion that are in great importance for the error analysis of the CN-BCFD-P scheme.
\begin{lemma}\label{lem:alpha:cn}
Under assumptions (A1)--(A2), there exists a positive constant $K$, independent of $h$ and $\tau$, such that for $n=1,\ldots,N_{t}$, the following estimates hold: 
$$
\begin{aligned}
    &|\bar{\varepsilon}^{n}| \leq K\|e_{c}^{n-1}\|_{{\rm M},\phi} + K\big( \tau^{3} + h^{2} \big),\\
    &|\bar{\varepsilon}^{n}|^{2}\leq K \tau \sum_{\ell=1}^{n-1} \|e_{c}^{\ell}\|_{{\rm M},\phi}^{2} + K \big( \tau^{4} + h^{4} \big),\\
    &|d_{\tau} \bar{\varepsilon}^{n}| \leq K\|e_{c}^{n-1}\|_{{\rm M},\phi} + K\big( \tau^{2} + h^{2} \big).
\end{aligned}
$$
\end{lemma}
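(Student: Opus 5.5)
We will mirror the proof of Lemma \ref{lem:alpha:be}, replacing the backward Euler mass balance by the trapezoidal (CN) one from Remark \ref{rem:mass:cn}. First, integrate \eqref{mod:mass_conservation} over $[t^{n-1},t^{n}]$ and approximate the time integrals by the trapezoidal rule and the space integrals by the midpoint rule. This gives, for the exact solution,
\begin{equation*}
\big( \phi c^{n},1 \big)_{\rm M} - \big( \phi c^{n-1},1 \big)_{\rm M} - \f{\tau}{2} \big( [q_{P}c]^{n}+[q_{P}c]^{n-1},1 \big)_{\rm M} = \tau \big( [q_{I} c_{I}]^{n-1/2},1 \big)_{\rm M} + \tau \bar{\Theta}^{n}.
\end{equation*}
Here $\bar{\Theta}^{n}$ collects three pieces: the trapezoidal quadrature error $O(\tau^{2})$ (using $c\in H^{3}(0,T;L^\infty)$), the error of replacing $\int_{t^{n-1}}^{t^n} q_Ic_I$ by $\tau[q_Ic_I]^{n-1/2}$, and the midpoint spatial quadrature error $O(h^{2})$. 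Hence $|\bar{\Theta}^{n}|\le K(\tau^{2}+h^{2})$, and $|d_\tau$-type differences of $\bar\Theta|$ are at least bounded by $K(\tau^2+h^2)$.

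Subtracting the discrete identity of Remark \ref{rem:mass:cn} (which uses $\bar G(C_h^n)=0$) and using $\bar\beta^{n}=\phi-\tau q_{P}^{n}/2$ yields the one-step error identity
\begin{equation*}
\big( \bar{\beta}^{n}e_{c}^{n},1 \big)_{\rm M} = \big( (\phi + \tfrac{\tau}{2}q_{P}^{n-1}) e_{c}^{n-1},1 \big)_{\rm M} + \tau \bar{\Theta}^{n}.
\end{equation*}
Since $(\bar\beta^n c^n,\chi(c^n))_{\rm M}\ge K_*$, this gives $|\bar{\varepsilon}^{n}|\le K|(\bar\beta^n e_c^n,1)_{\rm M}|\le K\|e_c^{n-1}\|_{{\rm M},\phi}+K\tau(\tau^2+h^2)$. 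This is the first estimate, with $\tau^{3}$ and with $\tau h^{2}\le h^{2}$.

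For the second estimate, sum the two mass identities from $\ell=1$ to $n$, exactly as in \eqref{alpha:be:4}--\eqref{alpha:be:5}. The telescoping leaves
\begin{equation*}
(\bar\beta^n e_c^n,1)_{\rm M}=\tau\sum_{\ell=1}^{n-1}([q_Pe_c]^\ell,1)_{\rm M}+\tfrac{\tau}{2}([q_P e_c]^0,1)_{\rm M}+\tau\sum_{\ell=1}^n\bar\Theta^\ell,
\end{equation*}
and $e_c^0=0$. The key point is that the accumulated quadrature error is $\tau\sum_\ell\bar\Theta^\ell=O(\tau^{2}+h^{2})$, with no loss of accuracy because $n\tau\le T$. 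Squaring and applying Cauchy--Schwarz in the sum gives $|\bar\varepsilon^n|^2\le K\tau\sum_{\ell=1}^{n-1}\|e_c^\ell\|^2_{{\rm M},\phi}+K(\tau^4+h^4)$.

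For the third estimate with $n\ge2$, apply $d_\tau$ to the summed identities. This gives
\begin{equation*}
d_\tau(\bar\beta^n e_c^n,1)_{\rm M}=\tfrac12([q_Pe_c]^{n-1},1)_{\rm M}+\tfrac12([q_Pe_c]^{n-1},1)_{\rm M}+\bar\Theta^n-\cdots,
\end{equation*}
that is, $O(\|e_c^{n-1}\|_{{\rm M},\phi})+O(\tau^2+h^2)$. Then write
\begin{equation*}
d_\tau\bar\varepsilon^n = \f{d_\tau(\bar\beta^n e_c^n,1)_{\rm M}}{(\bar\beta^nc^n,\chi(c^n))_{\rm M}} + (\bar\beta^{n-1}e_c^{n-1},1)_{\rm M}\, d_\tau\Big[\f{1}{(\bar\beta^nc^n,\chi(c^n))_{\rm M}}\Big].
\end{equation*}
The second bracket is $O(1)$ by the lower bound $K_*$, by \eqref{chi:L}, by $c\in W^{1,\infty}$, and by the smoothness of $q_P$ in $t$. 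The factor $(\bar\beta^{n-1}e_c^{n-1},1)_{\rm M}$ is bounded via the one-step identity by $K\|e_c^{n-1}\|_{{\rm M},\phi}+K\tau(\tau^2+h^2)$. For $n=1$, $\bar\varepsilon^0=0$ gives $|d_\tau\bar\varepsilon^1|=|\bar\varepsilon^1|/\tau\le K|\bar\Theta^1|\le K(\tau^2+h^2)$.

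\textbf{Main obstacle.} The hardest step is the consistency bound $|\bar\Theta^n|\le K(\tau^2+h^2)$, which must hold uniformly so that $\tau\sum\bar\Theta^\ell$ gains second order. This needs care with the midpoint treatment $[q_Ic_I]^{n-1/2}$ and with the time-dependent weight $\bar\beta^n$, whose mismatch terms $\tfrac{\tau}{2}(q_P^n-q_P^{n-1})$ must be absorbed as $O(\tau^2)$. I would handle them by Taylor expansion about $t^{n-1/2}$.
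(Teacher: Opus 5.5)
Your proposal is correct and is essentially the argument the paper intends; the paper omits this proof and simply refers to Lemma~\ref{lem:alpha:be}. The ingredients are the trapezoidal one-step identity $(\bar{\beta}^{n}e_{c}^{n},1)_{\rm M}=\big((\phi+\tfrac{\tau}{2}q_{P}^{n-1})e_{c}^{n-1},1\big)_{\rm M}+\tau\bar{\Theta}^{n}$ with $|\bar{\Theta}^{n}|\le K(\tau^{2}+h^{2})$, its telescoped sum using $e_{c}^{0}=0$, and the quotient splitting of $d_{\tau}\bar{\varepsilon}^{n}$, which together give all three bounds. Two small clean-ups: the bound $|(\bar{\beta}^{n-1}e_{c}^{n-1},1)_{\rm M}|\le K\|e_{c}^{n-1}\|_{{\rm M},\phi}$ follows directly from Cauchy--Schwarz, whereas the one-step identity at level $n-1$ would bring in $e_{c}^{n-2}$; and the differenced identity is exactly $d_{\tau}(\bar{\beta}^{n}e_{c}^{n},1)_{\rm M}=([q_{P}e_{c}]^{n-1},1)_{\rm M}+\bar{\Theta}^{n}$ with no further terms, so the weight ``mismatch'' you flag as the main obstacle never has to be absorbed into $\bar{\Theta}^{n}$.
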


Analogous to Lemma \ref{lem:be:ec}, we can also prove the following boundedness result for the second-order scheme.
\begin{lemma}\label{lem:cn:ec}
Suppose that $|\bar{\varepsilon}^{n}| \leq \eta < 1/2$. Then, there exists a positive constant $K$, independent of $h$ and $\tau$, such that for all $n=1,\ldots,N_{t}$, the following estimate holds
   \begin{equation*}
        \begin{aligned}
        \big( \phi D_{\tau} \widehat{e}_{\eta}^{n},  \widehat{e}_{\eta}^{n} +e_{\eta}^{n-1} \big)_{\rm M} 
         \geq \f{1}{2} d_{\tau} \left[ \|\widehat{e}_{\eta}^{n}\|_{{\rm M},\bar{\beta}^{n}}^{2} +  \|e_{c}^{n}\|_{{\rm M},\phi}^{2}  + \|[\bar{\alpha} c]^{n}\|_{{\rm M},\phi}^{2}   + 2 \big( \phi e_{c}^{n}, [\bar{\alpha} c]^{n} \big)_{\rm M} \right] - K \|\widehat{e}_{\eta}^{n}\|_{{\rm M},\bar{\beta}^{n}}^{2}.
    \end{aligned}
    \end{equation*}
\end{lemma}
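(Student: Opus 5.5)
The plan is to exploit the exact telescoping structure produced by the Crank–Nicolson test function $\widehat{e}_{\eta}^{n}+e_{\eta}^{n-1}$. Since $D_{\tau}\widehat{e}_{\eta}^{n}=(\widehat{e}_{\eta}^{n}-e_{\eta}^{n-1})/\tau$, we have the identity
$$
\big( \phi D_{\tau} \widehat{e}_{\eta}^{n}, \widehat{e}_{\eta}^{n}+e_{\eta}^{n-1} \big)_{\rm M}
=\f{1}{\tau}\Big( \|\widehat{e}_{\eta}^{n}\|_{{\rm M},\phi}^{2}-\|e_{\eta}^{n-1}\|_{{\rm M},\phi}^{2}\Big).
$$
I will split this into two equal halves. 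I will bound one half from below in terms of $\|\widehat{e}_{\eta}\|_{{\rm M},\bar{\beta}}$ and the other in terms of $e_{c}$ and $[\bar{\alpha}c]$, mirroring the two lower bounds \eqref{lowbound:e1}--\eqref{lowbound:e2} in the proof of Lemma \ref{lem:be:ec}. No truncation terms appear here, which is why no $\tau^{2}+h^{4}$ term shows up in the statement.

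\textbf{First half.} Since $\phi=\bar{\beta}^{n}+\tau q_{P}^{n}/2$, $q_{P}<0$ and $\bar{\beta}^{n}\ge\phi_{*}$, I get
$$
\|\widehat{e}_{\eta}^{n}\|_{{\rm M},\phi}^{2}=\|\widehat{e}_{\eta}^{n}\|_{{\rm M},\bar{\beta}^{n}}^{2}+\f{\tau}{2}\big(q_{P}^{n}\widehat{e}_{\eta}^{n},\widehat{e}_{\eta}^{n}\big)_{\rm M}\ge \|\widehat{e}_{\eta}^{n}\|_{{\rm M},\bar{\beta}^{n}}^{2}-K\tau\|\widehat{e}_{\eta}^{n}\|_{{\rm M},\bar{\beta}^{n}}^{2}.
$$
In the other direction, $\phi\le\bar{\beta}^{n-1}$ gives $\|e_{\eta}^{n-1}\|_{{\rm M},\phi}^{2}\le\|e_{\eta}^{n-1}\|_{{\rm M},\bar{\beta}^{n-1}}^{2}\le\|\widehat{e}_{\eta}^{n-1}\|_{{\rm M},\bar{\beta}^{n-1}}^{2}$. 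The last step uses the projection estimate \eqref{aux:proj_err:cn} at level $n-1$, which is valid because $|\bar{\varepsilon}^{n-1}|\le\eta$ is part of the induction hypothesis. At $n=1$ it is trivial, since $e_{\eta}^{0}=\widehat{e}_{\eta}^{0}=0$. Hence the first half is at least $\f12 d_{\tau}\|\widehat{e}_{\eta}^{n}\|_{{\rm M},\bar{\beta}^{n}}^{2}-K\|\widehat{e}_{\eta}^{n}\|_{{\rm M},\bar{\beta}^{n}}^{2}$.

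\textbf{Second half.} I expand
$$
\|\widehat{e}_{\eta}^{n}\|_{{\rm M},\phi}^{2}=\|e_{\eta}^{n}\|_{{\rm M},\phi}^{2}+\|\widehat{e}_{\eta}^{n}-e_{\eta}^{n}\|_{{\rm M},\phi}^{2}+2\big(\phi(\widehat{e}_{\eta}^{n}-e_{\eta}^{n}),e_{\eta}^{n}\big)_{\rm M}.
$$
I drop the nonnegative middle term. For the cross term I again write $\phi=\bar{\beta}^{n}+\tau q_{P}^{n}/2$. The $\bar{\beta}^{n}$ part is nonnegative by the second inequality in \eqref{aux:proj_err:cn}. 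The $q_{P}$ part is bounded below by $-K\tau\|\widehat{e}_{\eta}^{n}\|_{{\rm M},\bar{\beta}^{n}}^{2}$, using $\|e_{\eta}^{n}\|,\|\widehat{e}_{\eta}^{n}-e_{\eta}^{n}\|\le\|\widehat{e}_{\eta}^{n}\|$ in the $\bar{\beta}^{n}$-norm from \eqref{aux:proj_err:cn}. Then I substitute $e_{\eta}^{k}=e_{c}^{k}+[\bar{\alpha}c]^{k}$ for $k=n,n-1$, which yields
$$
\|e_{\eta}^{k}\|_{{\rm M},\phi}^{2}=\|e_{c}^{k}\|_{{\rm M},\phi}^{2}+\|[\bar{\alpha}c]^{k}\|_{{\rm M},\phi}^{2}+2\big(\phi e_{c}^{k},[\bar{\alpha}c]^{k}\big)_{\rm M}.
$$
So the second half is at least $\f12 d_{\tau}\big[\|e_{c}^{n}\|_{{\rm M},\phi}^{2}+\|[\bar{\alpha}c]^{n}\|_{{\rm M},\phi}^{2}+2(\phi e_{c}^{n},[\bar{\alpha}c]^{n})_{\rm M}\big]-K\|\widehat{e}_{\eta}^{n}\|_{{\rm M},\bar{\beta}^{n}}^{2}$. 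Adding the two halves gives the claimed estimate.

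\textbf{Main obstacle.} The delicate point is the bookkeeping between the weights $\phi$ and $\bar{\beta}^{n}$ (and $\bar{\beta}^{n-1}$). The projection inequalities of Lemma \ref{lem:hatec:cn} hold only in the $\bar{\beta}$-weighted norm, while the time difference is naturally measured in the $\phi$-weight. I must check that every discrepancy either carries a favorable sign (from $q_{P}<0$) or carries a factor $\tau$ that cancels the $1/\tau$, leaving only $K\|\widehat{e}_{\eta}^{n}\|_{{\rm M},\bar{\beta}^{n}}^{2}$. I must also ensure that the hypothesis $|\bar{\varepsilon}|\le\eta$ is available at both levels $n$ and $n-1$.
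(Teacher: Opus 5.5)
Your proposal is correct and is essentially the paper's argument. The paper also derives two lower bounds and averages them, which is exactly your halving: one bound comes from the projection estimate \eqref{aux:proj_err:cn} at level $n-1$ and yields $d_{\tau}\|\widehat{e}_{\eta}^{n}\|_{{\rm M},\bar{\beta}^{n}}^{2}$, the other from both inequalities in \eqref{aux:proj_err:cn} at level $n$ together with $e_{\eta}^{k}=e_{c}^{k}+[\bar{\alpha}c]^{k}$. Your identity $\big(\phi D_{\tau}\widehat{e}_{\eta}^{n},\widehat{e}_{\eta}^{n}+e_{\eta}^{n-1}\big)_{\rm M}=\tau^{-1}\big(\|\widehat{e}_{\eta}^{n}\|_{{\rm M},\phi}^{2}-\|e_{\eta}^{n-1}\|_{{\rm M},\phi}^{2}\big)$ simply shortcuts the paper's $I_{4}$ manipulation, and you are right that $|\bar{\varepsilon}^{n-1}|\le\eta$ is also needed, which the paper uses only implicitly through the induction hypothesis.
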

\begin{proof} The proof of this result follows the same lines as Lemma \ref{lem:be:ec} by establishing two different lower bounds. First, note that the identities $\widehat{e}_{\eta}^{n} = \widehat{e}_{c}^{n} + [\bar{\alpha} c]^{n}$ and $e_{\eta}^{n-1}=e_{c}^{n-1} + [\bar{\alpha} c]^{n-1}$. Then, we have
\begin{equation}\label{cn:ec:1}
	\begin{aligned}
       & \big( \phi D_{\tau} \widehat{e}_{\eta}^{n},  \widehat{e}_{\eta}^{n} +e_{\eta}^{n-1} \big)_{\rm M}   \\ 
      & \quad \geq \big( \phi \big[ d_{\tau} e_{c}^{n} + (\widehat{e}_{\eta}^{n} -e_{\eta}^{n})/\tau + d_{\tau} [\bar{\alpha} c]^{n} \big],  e_{c}^{n}+e_{c}^{n-1} + \widehat{e}_{\eta}^{n}-e_{\eta}^{n} + [\bar{\alpha }c]^{n} + [\bar{\alpha c}]^{n-1}\big)_{\rm M} \\
      & \quad =:  d_{\tau} \left[\|e_{c}^{n}\|_{{\rm M},\phi}^{2} +   \|[\bar{\alpha} c]^{n}\|_{{\rm M},\phi}^{2}\right] + I_{4},
    \end{aligned}
\end{equation}
where, invoking $d_{\tau}e_{c}^{n} = d_{\tau}e_{\eta}^{n}-d_{\tau}[\bar{\alpha}c]^{n}$, the term $I_{4}$ can be transformed into
\begin{equation*}
    \begin{aligned}
        I_{4}:= &\big( \phi d_{\tau} e_{c}^{n}, \widehat{e}_{\eta}^{n} - e_{\eta}^{n} + [\bar{\alpha} c]^{n} + [\bar{\alpha} c]^{n-1} \big)_{\rm M} +  \f{1}{\tau}\big( \phi \big( \widehat{e}_{\eta}^{n} - e_{\eta}^{n} \big),  \widehat{e}_{\eta}^{n} - e_{\eta}^{n} + e_{\eta}^{n}+e_{\eta}^{n-1}\big)_{\rm M} \\
        & \quad   + \big( \phi d_{\tau} [\bar{\alpha} c]^{n},  e_{c}^{n} + e_{c}^{n-1} + \widehat{e}_{\eta}^{n} - e_{\eta}^{n}  \big)_{\rm M}  \\
        =& \big( \phi d_{\tau} e_{\eta}^{n} - \phi d_{\tau} [\bar{\alpha} c]^{n}, \widehat{e}_{\eta}^{n} - e_{\eta}^{n} \big)_{\rm M} + 2 d_{\tau} \big( \phi e_{c}^{n}, [\bar{\alpha} c]^{n} \big)_{\rm M} + \f{1}{\tau} \|\widehat{e}_{\eta}^{n} - e_{\eta}^{n}\|_{{\rm M},\phi}^{2} \\
        & \quad  +  \f{1}{\tau}\big( \phi \big( \widehat{e}_{\eta}^{n} - e_{\eta}^{n} \big),  e_{\eta}^{n}+e_{\eta}^{n-1}\big)_{\rm M}  + \big( \phi d_{\tau} [\bar{\alpha} c]^{n},   \widehat{e}_{\eta}^{n} - e_{\eta}^{n}  \big)_{\rm M}  \\
        = &  \f{2}{\tau}  \big( \phi \big( \widehat{e}_{\eta}^{n} - e_{\eta}^{n} \big),  e_{\eta}^{n} \big)_{\rm M} + 2 d_{\tau} \big( \phi e_{c}^{n}, [\bar{\alpha} c]^{n} \big)_{\rm M} + \f{1}{\tau} \|\widehat{e}_{\eta}^{n} - e_{\eta}^{n}\|_{{\rm M},\phi}^{2}.
    \end{aligned}
\end{equation*}
Furthermore, if $|\bar{\varepsilon}^{n}| \leq \eta < 1/2$, then the estimate \eqref{aux:proj_err:cn} is valid, which immediately yields
\begin{equation*}
   \begin{aligned}
      \f{2}{\tau}  \big( \phi \big(\widehat{e}_{\eta}^{n}-e_{\eta}^{n}\big), e_{\eta}^{n} \big)_{\rm M} & = \f{2}{\tau}  \big( \bar{\beta}^{n} \big(\widehat{e}_{\eta}^{n}-e_{\eta}^{n}\big), e_{\eta}^{n} \big)_{\rm M} + \big( q_{P}^{n} \big(\widehat{e}_{\eta}^{n}-e_{\eta}^{n}\big), e_{\eta}^{n} \big)_{\rm M} \\
      & \geq \big( q_{P}^{n} \big(\widehat{e}_{\eta}^{n}-e_{\eta}^{n}\big), e_{\eta}^{n} \big)_{\rm M}
      \geq - K \|\widehat{e}_{\eta}^{n}\|_{{\rm M},\bar{\beta}^{n}}^{2}.
   \end{aligned}
\end{equation*}
Therefore, similar to \eqref{lowbound:e1}, substituting the above estimates into \eqref{cn:ec:1} yields the first lower bound
\begin{equation}\label{lowbound:e3}
    \begin{aligned}
        \big( \phi D_{\tau} \widehat{e}_{\eta}^{n},  \widehat{e}_{\eta}^{n} +e_{\eta}^{n-1} \big)_{\rm M} \geq d_{\tau} \left[  \|e_{c}^{n}\|_{{\rm M},\phi}^{2}  + \|[\bar{\alpha} c]^{n}\|_{{\rm M},\phi}^{2}   + 2 \big( \phi e_{c}^{n}, [\bar{\alpha} c]^{n} \big)_{\rm M} \right] - K \|\widehat{e}_{\eta}^{n}\|_{{\rm M},\bar{\beta}^{n}}^{2}.
    \end{aligned}
\end{equation}

Next, analogous to \eqref{lowbound:e2}, 
we can derive another lower bound:
\begin{equation}\label{lowbound:e4}
\begin{aligned}
       & \left( \phi D_{\tau} \widehat{e}_{\eta}^{n}, \widehat{e}_{\eta}^{n} + e_{\eta}^{n-1} \right)_{\rm M} \\ 
       &\quad = \f{1}{\tau} \Big( \|\widehat{e}_{\eta}^{n}\|_{{\rm M},\bar{\beta}^{n}}^{2} 
       - \|e_{\eta}^{n-1}\|_{{\rm M},\bar{\beta}^{n-1}}^{2} \Big) 
       + \f{1}{2} \left( [q_{P}\widehat{e}_{\eta}]^{n}, \widehat{e}_{\eta}^{n} \right)_{\rm M} 
       - \f{1}{2} \left( [q_{P}e_{\eta}]^{n-1}, e_{\eta}^{n-1} \right)_{\rm M} \\
       & \quad
       \geq \f{1}{\tau}\Big( \|\widehat{e}_{\eta}^{n}\|_{{\rm M},\bar{\beta}^{n}}^{2}  - \|\widehat{e}_{\eta}^{n-1}\|_{{\rm M},\bar{\beta}^{n-1}}^{2} \Big) - K\|\widehat{e}_{\eta}^{n}\|_{{\rm M},\bar{\beta}^{n}}^{2} .
    \end{aligned}
\end{equation}
Finally, adding the two lower bounds \eqref{lowbound:e3} and \eqref{lowbound:e4} together yields the desired result.
\end{proof}

Following the proof of Theorem \ref{thm:err:be} and invoking Lemmas \ref{lem:alpha:cn} and \ref{lem:cn:ec}, we can obtain the following error estimate for the CN-BCFD-P scheme \eqref{scm:cn:c}--\eqref{scm:cn:u}.
\begin{theorem}\label{thm:err:cn}
Under assumptions (A1)--(A2), the CN-BCFD-P scheme \eqref{scm:cn:c}--\eqref{scm:cn:u} admits a unique solution triple $\{P_{h}^{n},\bm{U}_{h}^{n},C_{h}^{n}\}$, provided that the time-step
condition $\tau=o(h^{1/2})$ is satisfied. Moreover, there exists a positive constant $K$, independent of $h$ and $\tau$, such that for all $n=1,\ldots,N_{t}$,  the following estimate holds:
$$
\begin{aligned}
    \|e_{c}^{n}\|_{\rm M}^{2}  
    + \|e_{p}^{n}\|_{\rm M}^{2} + \|\nabla_{h} e_{p}^{n}\|_{\rm T}^{2} + \| \bm{e}_{\bm{u}}^{n} \|_{\rm T}^{2} 
    \leq K\big( \tau^{4}  +  h^{4}\big).
\end{aligned}
$$
In particular, at the initial time $t=t^{0}$, the system \eqref{scm:cn:u} admits a unique solution pair $\{ P_{h}^{0}, \bm{U}_{h}^{0} \}$, and there exists a positive constant $K$, independent of $\tau$ and $h$, such that the following estimate holds:
$$
 \|e_{p}^{0}\|_{\rm M}^{2} + \|\nabla_{h} e_{p}^{0}\|_{\rm T}^{2} + \| \bm{e}_{\bm{u}}^{0} \|_{\rm T}^{2} \leq K h^{2}.
$$
\end{theorem}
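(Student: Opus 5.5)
The proof follows the architecture of Theorem \ref{thm:err:be}: an induction on the hypotheses
\begin{equation*}
\|\bm{U}_h^{\ell}\|_\infty\le K_e+1, \qquad |\bar\varepsilon^{\ell}|\le\eta .
\end{equation*}
Because of the reversed ordering, the velocity--pressure error at level $n$ is driven by $e_c^{n}$ itself rather than $e_c^{n-1}$.

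\textbf{Initial step.} For $t^0$, apply the argument of Lemma \ref{lem:err:be:pu} to \eqref{scm:cn:u} with $C_h^0=c^o$. Unique solvability is again obtained by eliminating $\bm v_h$ and using Lemma \ref{lem:dis} together with $\sigma_{1,1}=0$. The error estimate uses $e_c^0=0$, so only truncation errors remain. The stated $Kh^2$ bound is weaker than the available $O(h^4)$, which is harmless. This gives $\|\bm U_h^0\|_\infty\le K_e+1$ by an inverse inequality.

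\textbf{Velocity and pressure error equations.} For general $n$, the equations read as \eqref{err:be:1}--\eqref{err:be:2}, but with $\bm{\mal A}(c^n)-\bm{\mal A}(C_h^n)$ and no $O(\tau)$ lag in $\bm\epsilon_2$. The same energy argument then gives
\begin{equation*}
\|e_p^n\|_{\rm M}^2+\|\nabla_h e_p^n\|_{\rm T}^2+\|\bm e_{\bm u}^n\|_{\rm T}^2\le K\|e_c^n\|_{{\rm M},\phi}^2+Kh^4 .
\end{equation*}

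\textbf{Concentration prediction step.} Write the error equation for \eqref{scm:cn:c} in terms of $\widehat e_\eta^n$ and $e_\eta^{n-1}$, with $\bar c_\eta$ as in \eqref{c:eta:cn}. The truncation errors of the CN time average are $O(\tau^2+h^2)$ under (A1), with the $\bar\varepsilon$-dependent perturbations $\zeta$ controlled by Lemma \ref{lem:alpha:cn}. The extrapolation $\widetilde{\bm U}_h^{n-1/2}$ contributes
\begin{equation*}
\tfrac32\bm e_{\bm u}^{n-1}-\tfrac12\bm e_{\bm u}^{n-2}+O(\tau^2),
\end{equation*}
which is bounded by $e_c^{n-1},e_c^{n-2}$ via the previous step. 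The $n=1$ case uses $\bm U_h^0$ and is only first order locally, but it is taken once, costing $\tau\cdot O(\tau)$ in the summed energy.

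Solvability of the linear system for $\widehat C_h^n$ follows as in Part 1 of Theorem \ref{thm:err:be}, using the induction bound on $\widetilde{\bm U}_h^{n-1/2}$ and small $\tau$.

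Testing the error equation with $\widehat e_\eta^n+e_\eta^{n-1}$ is the natural CN test. The time term is bounded below by Lemma \ref{lem:cn:ec}. The diffusion parts give
\begin{equation*}
\tfrac12\|\nabla_h\widehat e_\eta^n\|^2_{{\rm T},\bm{\mal D}}-\|\nabla_h e_\eta^{n-1}\|^2_{{\rm T},\bm{\mal D}},
\end{equation*}
up to cross terms. Both the mixed-time dissipation and the $q_P$ terms are controlled by Young's inequality plus a telescoping sum.

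\textbf{Gronwall and closing the induction.} Summing and applying Gronwall yields $\|\widehat e_\eta^n\|^2_{{\rm M},\bar\beta^n}\le K(\tau^4+h^4)$. Then the trick of \eqref{errc:be:5}--\eqref{errc:be:6} applies: absorb $-2(\phi e_c^n,[\bar\alpha c]^n)_{\rm M}$, invoke the second bound of Lemma \ref{lem:alpha:cn}, and apply Gronwall a second time to get $\|e_c^n\|^2_{{\rm M},\phi}\le K(\tau^4+h^4)$. The velocity and pressure bounds at level $n$ follow.

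To close the induction, the inverse inequality gives
\begin{equation*}
\|\bm e_{\bm u}^n\|_\infty\le Kh^{-1}(\tau^2+h^2)\le 1
\end{equation*}
exactly when $\tau=o(h^{1/2})$. In addition, $|\bar\varepsilon^{n+1}|\le K(\tau^2+h^2)\le\eta$.

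\textbf{Main obstacle.} The hardest step is the CN energy estimate with the non-symmetric test function $\widehat e_\eta^n+e_\eta^{n-1}$. The dissipation of $\widehat e_\eta^n$ must dominate the terms involving $\nabla_h e_\eta^{n-1}$, and $e_\eta^{n-1}$ is a \emph{projected} quantity. Its gradient is not directly controlled by the $L^2$ non-expansiveness \eqref{aux:proj_err:cn}.

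My first attempt would be to keep $\tau\sum\|\nabla_h e_\eta^{\ell}\|^2$ on the right and bound it crudely via an inverse estimate: $\|\nabla_h(e_\eta^{\ell}-\widehat e_\eta^{\ell})\|\le Kh^{-1}\|e_\eta^{\ell}-\widehat e_\eta^{\ell}\|$, combined with the first estimate in \eqref{aux:proj_err:cn}.

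If that loses too much, I would instead test with $\widehat e_\eta^n$ alone and treat the explicit half of the CN operator as a perturbation. That route needs an $H^1$-type stability of the projection, obtained from the explicit closed form \eqref{be:lm}, which is a shift followed by clipping and is therefore non-expansive for differences.
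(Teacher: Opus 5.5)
\textbf{Verdict: there is a genuine gap in the central step.} Your overall architecture matches what the paper intends: the induction on $\|\bm U_h^\ell\|_\infty$ and $|\bar\varepsilon^\ell|$, the velocity--pressure error driven by $e_c^n$, the test function $\widehat e_\eta^n+e_\eta^{n-1}$ with Lemma~\ref{lem:cn:ec}, two Gronwall steps, and closure through $\tau^2/h\to0$. But the step you call the main obstacle is mishandled, and it is the heart of the proof.

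\textbf{The obstacle is not real.} It only appears because you split the Crank--Nicolson flux. In \eqref{scm:cn:c} a single matrix $\bm{\mal D}(\widetilde{\bm U}_h^{n-1/2})$ acts on $\nabla_h[\widehat C_h^n+C_h^{n-1}]$. For a fixed velocity, $\bm{\mal B}(\cdot)$ is linear, since the upwind switch depends only on the sign of $\widetilde{\bm U}_h^{n-1/2}$; hence $\bm{\mal B}(\widehat C_h^n)+\bm{\mal B}(C_h^{n-1})=\bm{\mal B}(\widehat C_h^n+C_h^{n-1})$. Set $s^n:=\widehat e_\eta^n+e_\eta^{n-1}$. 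The discrete flux in the error equation is then $\frac12\bm{\mal B}(s^n)\widetilde{\bm U}_h^{n-1/2}-\frac12\bm{\mal D}(\widetilde{\bm U}_h^{n-1/2})\nabla_h s^n$, plus data terms: the extrapolated-velocity error, truncation errors, and the $\bar\alpha$-perturbations. Testing with $s^n$ gives the nonnegative dissipation $\frac12\|\nabla_h s^n\|^2_{{\rm T},\bm{\mal D}(\widetilde{\bm U}_h^{n-1/2})}$. Every remaining flux term is paired with $\nabla_h s^n$, so Young's inequality absorbs it into half of this dissipation. What remains is $K\|s^n\|^2_{\rm M}\le K\|\widehat e_\eta^n\|^2_{{\rm M},\bar\beta^n}+K\|e_\eta^{n-1}\|^2_{{\rm M},\bar\beta^{n-1}}$, and the last norm is bounded by $\|\widehat e_\eta^{n-1}\|_{{\rm M},\bar\beta^{n-1}}$ via \eqref{aux:proj_err:cn}. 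The gradient of the projected error alone never appears. This is exactly why Lemma~\ref{lem:cn:ec} is stated for this test function.

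\textbf{Your split cannot be closed.} In $\frac12\|\nabla_h\widehat e_\eta^n\|^2-\|\nabla_h e_\eta^{n-1}\|^2$ the negative coefficient exceeds the positive one. The weight $\bm{\mal D}(\widetilde{\bm U}_h^{n-1/2})$ also changes with $n$, so nothing telescopes.

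\textbf{Neither remedy rescues it.}
\begin{itemize}
\item \emph{Inverse estimate.} It produces $Kh^{-2}\|e_\eta^{\ell}-\widehat e_\eta^{\ell}\|^2_{\rm M}$ on the right-hand side. Using \eqref{aux:proj_err:cn} bounds this only by $Kh^{-2}\|\widehat e_\eta^\ell\|^2_{\rm M}$, which ruins Gronwall. The only other control is the term $\tau^{-1}\|\widehat e_\eta^\ell-e_\eta^\ell\|^2_{{\rm M},\phi}$ inside $I_4$ in the proof of Lemma~\ref{lem:cn:ec}. It absorbs this term only when $\tau\lesssim h^2$, a parabolic restriction far stronger than the hypothesis $\tau=o(h^{1/2})$.
\item \emph{$H^1$ non-expansiveness.} Shift-then-clip is $1$-Lipschitz for $C_h^n$ itself, so $|\nabla_h C_h^n|\le|\nabla_h\widehat C_h^n|$. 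It is not non-expansive for the error. Take an edge where $\widehat C_h^n-\gamma^n$ is clipped to $0$ in both adjacent cells while $\widehat C_h^n-\bar c_\eta^n$ is constant there. Then $\nabla_h\widehat e_\eta^n=0$ on that edge but $\nabla_h e_\eta^n=\nabla_h\bar c_\eta^n$. With the mass balanced by a jump elsewhere, such configurations make $\|\nabla_h e_\eta^n\|_{\rm T}\le\|\nabla_h\widehat e_\eta^n\|_{\rm T}$ fail in general.
\end{itemize}

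\textbf{A smaller point at $n=1$.} The defect $\bm u^{1/2}-\bm U_h^0$ is $O(\tau)$. Bounding it against $\nabla_h s^1$ by Young costs $O(\tau^3)$ in the squared energy, not $O(\tau^4)$. To recover $\tau^4$, keep its smooth part in divergence form, pair it with $s^1$, and absorb $K\tau^2\|s^1\|_{\rm M}$ into the maximum of the energy.
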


Similarly, an application of Lemma \ref{lem:hat:cn}, together with an argument analogous to that of Theorem \ref{thm:stb:be}, yields the following stability result for the second-order scheme \eqref{scm:cn:c}--\eqref{scm:cn:u}.
\begin{theorem}\label{thm:stb:cn}
Under the assumptions of Theorem \ref{thm:err:cn}, there exists a positive constant $K$, independent of $h$ and $\tau$, such that for all $n=1,\ldots,N_{t}$, the following stability estimates hold:
$$
\|P_{h}^{n}\|_{\rm M}^{2} + \|\nabla_{h} P_{h}^{n}\|_{\rm T}^{2} + \| \bm{U}_{h}^{n} \|_{\rm T}^{2} \leq K\|q^{n} \|_{\rm M}^{2},
\quad
\|C_{h}^{n}\|_{\rm M}^{2} \leq K\| c^{o} \|_{\rm M}^{2} + K \tau \sum_{\ell=1}^{n} \|[q_{I}c_{I}]^{\ell-1/2}\|_{\rm M}^{2}.
$$
\end{theorem}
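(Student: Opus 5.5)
The plan mirrors the proof of Theorem \ref{thm:stb:be}, in two parts. One preliminary fact is used throughout. Since $q_P\le 0$ and $\tau\le\tau_*$, the weight $\bar\beta^n=\phi-\tau q_P^n/2$ satisfies $\phi_*\le\bar\beta^n\le\phi^*+\tau_*K_p/2$. Hence $\|\cdot\|_{{\rm M},\bar\beta^n}$, $\|\cdot\|_{{\rm M},\phi}$ and $\|\cdot\|_{\rm M}$ are uniformly equivalent.

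\textbf{Velocity and pressure.} Well-posedness of \eqref{scm:cn:u} for given $C_h^n$ is taken from Theorem \ref{thm:err:cn}, which also gives it at $n=0$. For the bound:
\begin{itemize}
\item[(i)] Take the inner products of the two equations of \eqref{scm:cn:u} with $P_h^n$ and $\bm U_h^n$, add them, and use Lemma \ref{lem:dis}. This gives $(\bm{\mal A}(C_h^n)\bm U_h^n,\bm U_h^n)_{\rm T}=(q^n,P_h^n)_{\rm M}$.
\item[(ii)] Because $C_h^n\in[0,1]$ after the projection step, \eqref{assump:a} applies and the left side is at least $a_*\|\bm U_h^n\|_{\rm T}^2$.
\item[(iii)] Bound the right side by $K\|q^n\|_{\rm M}\|\nabla_hP_h^n\|_{\rm T}$, using $P^n_{1,1}=0$ and Lemma \ref{lem:seminorm}.
\item[(iv)] The second equation of \eqref{scm:cn:u} gives $\|\nabla_hP_h^n\|_{\rm T}\le a^*\|\bm U_h^n\|_{\rm T}$.
\end{itemize}
Combining (i)–(iv) yields $\|\bm U_h^n\|_{\rm T}\le K\|q^n\|_{\rm M}$. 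A second use of Lemma \ref{lem:seminorm} then bounds $\|P_h^n\|_{\rm M}$ and $\|\nabla_hP_h^n\|_{\rm T}$.

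\textbf{Concentration, energy inequality.} Take the inner product of \eqref{scm:cn:c} with $\widehat C_h^n+C_h^{n-1}$, the natural Crank–Nicolson test function.
\begin{itemize}
\item[(a)] Time-derivative term: exactly $\frac1\tau(\|\widehat C_h^n\|_{{\rm M},\phi}^2-\|C_h^{n-1}\|_{{\rm M},\phi}^2)$.
\item[(b)] Source term: $-\frac12([q_P\widehat C_h]^n+[q_PC_h]^{n-1},\widehat C_h^n+C_h^{n-1})_{\rm M}$. Split it into the diagonal parts $-\frac12(q_P^n\widehat C_h^n,\widehat C_h^n)_{\rm M}$ and $-\frac12(q_P^{n-1}C_h^{n-1},C_h^{n-1})_{\rm M}$, and cross terms. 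The diagonal parts are nonnegative and combine with the $\phi$-norms into $\bar\beta$-weighted norms, up to a factor $\tfrac12$ that differs from \eqref{lowbound:e2}. The cross terms, together with the mismatch $\tfrac{\tau}{2}q_P^{n-1}$ against $\bar\beta^{n-1}$, are bounded by $K(\|\widehat C_h^n\|_{\rm M}^2+\|C_h^{n-1}\|_{\rm M}^2)$.
\item[(c)] Flux term: apply Lemma \ref{lem:dis}. The diffusion part yields $\frac12\|\nabla_h(\widehat C_h^n+C_h^{n-1})\|^2_{{\rm T},\bm{\mal D}(\widetilde{\bm U}_h^{n-1/2})}$. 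The upwind advective part is $\frac12(\bm{\mal B}(\widehat C_h^n+C_h^{n-1})\widetilde{\bm U}_h^{n-1/2},\nabla_h(\widehat C_h^n+C_h^{n-1}))_{\rm T}$. It is handled by Young's inequality as in \eqref{exit:be:c2}. For this one needs $\|\widetilde{\bm U}_h^{n-1/2}\|_\infty\le K$, which comes from the induction in the proof of Theorem \ref{thm:err:cn} together with the inverse inequality and $\tau=o(h^{1/2})$. This absorbs half the diffusion term at the cost of $K(\|\widehat C_h^n\|_{\rm M}^2+\|C_h^{n-1}\|_{\rm M}^2)$.
\item[(d)] Right-hand side: $\le\|\widehat C_h^n\|_{\rm M}^2+\|C_h^{n-1}\|_{\rm M}^2+K\|[q_Ic_I]^{n-1/2}\|_{\rm M}^2$.
\end{itemize}
Finally, replace $\|\widehat C_h^n\|^2_{{\rm M},\bar\beta^n}$ from below using Lemma \ref{lem:hat:cn}, $\|C_h^n\|^2_{{\rm M},\bar\beta^n}\le\|\widehat C_h^n\|^2_{{\rm M},\bar\beta^n}$. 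This produces a telescoping quantity in $\|C_h^n\|^2_{{\rm M},\bar\beta^n}$.

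\textbf{Closing the estimate.} Multiply by $\tau$, sum over $\ell=1,\dots,n$, and apply the discrete Gronwall inequality. For $\tau$ small, the terms $K\tau\|\widehat C_h^n\|^2$ are absorbed on the left. The result is $\|C_h^n\|_{\rm M}^2\le K\|c^o\|_{\rm M}^2+K\tau\sum_\ell\|[q_Ic_I]^{\ell-1/2}\|_{\rm M}^2$, by the norm equivalence noted at the start.

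\textbf{Main obstacle.} The expected difficulty is the bookkeeping of the mismatched weights $\bar\beta^{n}$ versus $\bar\beta^{n-1}$ and of the explicit $q_P^{n-1}C_h^{n-1}$ term in the Crank–Nicolson average. Unlike the backward Euler case, these do not telescope exactly. The plan is to accept an $O(1)$ Gronwall term rather than seek an exact cancellation: every $\tau$-weighted discrepancy is bounded by $K\tau(\|\widehat C_h^\ell\|^2+\|C_h^{\ell-1}\|^2)$, which is summable. A secondary dependence is the uniform bound on the extrapolated velocity $\widetilde{\bm U}_h^{n-1/2}$, which is taken from the error analysis behind Theorem \ref{thm:err:cn}.
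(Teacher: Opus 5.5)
Your proposal is correct and follows essentially the paper's argument. The paper obtains this result by repeating the proof of Theorem~\ref{thm:stb:be} with Lemma~\ref{lem:hat:cn} in place of Lemma~\ref{lem:hat:be}, and your Crank--Nicolson test function $\widehat{C}_{h}^{n}+C_{h}^{n-1}$ is the one used in Lemma~\ref{lem:cn:ec}. The only thing to tidy is the order of operations at the end. Absorb $K\tau\|\widehat{C}_{h}^{n}\|_{\rm M}^{2}$ into $\|\widehat{C}_{h}^{n}\|_{{\rm M},\bar{\beta}^{n}}^{2}$ \emph{before} invoking Lemma~\ref{lem:hat:cn}, because once that norm is replaced by $\|C_{h}^{n}\|_{{\rm M},\bar{\beta}^{n}}^{2}$ nothing on the left controls $\|\widehat{C}_{h}^{n}\|_{\rm M}$. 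Alternatively, do as the paper does and use $\|\widehat{C}_{h}^{n}\|_{{\rm M},\bar{\beta}^{n}}^{2}\ge\frac12\|\widehat{C}_{h}^{n}\|_{{\rm M},\bar{\beta}^{n}}^{2}+\frac12\|C_{h}^{n}\|_{{\rm M},\bar{\beta}^{n}}^{2}$, so the telescoped quantity keeps $\|\widehat{C}_{h}^{n}\|^{2}$.
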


\section{Numerical experiments}\label{sec:num}
In this section, we present several numerical examples to verify the convergence and demonstrate the performance of the proposed BE-BCFD-P and CN-BCFD-P schemes, given by \eqref{scm:be:u}--\eqref{scm:be:min} and \eqref{scm:cn:c}--\eqref{scm:cn:u}, respectively, as well as their counterparts BE-BCFD-P2 and CN-BCFD-P2 with the weighted $L^{2}$-projection replaced by the weighted $H^{1}$-projection. 

To determine the mass-conserving Lagrange multiplier $\gamma^{n}$ via the nonlinear equation \eqref{F:be}, we apply the secant method as follows:
$$
\gamma_{(\ell+1)}^{n} = \gamma_{(\ell)}^{n} - \frac{F(\gamma_{(\ell)}^{n})(\gamma_{(\ell)}^{n} - \gamma_{(\ell-1)}^{n})}{F(\gamma_{(\ell)}^{n}) - F(\gamma_{(\ell-1)}^{n})}, \quad \ell = 1, 2, \dots,
$$
with the initial guesses $\gamma_{(0)}^{n}=0$ and $\gamma_{(1)}^{n}$,  and the iteration is stopped once $|F(\gamma_{(\ell+1)}^{n})| \leq TOL$, where $\gamma_{(1)}^{n}=TOL=10^{-16}$ in Example \ref{exm:e1} and $\gamma_{(1)}^{n}=TOL=10^{-10}$ in Examples \ref{exm:e2}--\ref{exm:e3}. In addition, the semismooth Newton method, detailed in \ref{App:A}, is employed for the nonlinear system \eqref{F:cn}, where the convergence criterion is set as $\|V_{h,\ell+1}^n-V_{h,\ell}^n\|_{\infty} \leq 10^{-10}$. 

In practical simulations, the diffusion coefficient $\bm{D}(\bm{x},\bm{u})$ is nonlinear and may take various forms. A well-known form is the Bear-Scheidegger diffusion-dispersion tensor \cite{Peaceman77}: 
\begin{equation*}
	\bm{D}(\bm{x},\bm{u}) 
    = \phi(\bm{x}) \left( d_{\rm mol} \bm{I} + d_{\rm long}|\bm{u}|\bm{E}(\bm{u})  
           + d_{\rm tran}|\bm{u}| \left(  \bm{I} - \bm{E}(\bm{u}) \right)  \right),\quad 
    \bm{E}(\bm{u}):=  \f{\bm{u}\bm{u}^{\top}}{|\bm{u}|^{2}},
\end{equation*}
where $|\bm{u}|^{2}=\bm{u} \cdot \bm{u}$ denotes the Euclidean norm, $\bm{I}$  is the identity matrix, and $d_{\rm mol}$, $d_{\rm long}$ and $d_{\rm tran}$ are the molecular diffusion, longitudinal dispersion, and transverse dispersion coefficients, respectively. Such a velocity-dependent nonlinear diffusion-dispersion tensor is symmetric and positive definite. For simplicity, we write
$$
\bm{D}(\bm{v})=\bm{D}(\bm{x},\bm{v})= \left(
\begin{aligned}
    & D_{11}(\bm{v}) & D_{12}(\bm{v}) \\
    & D_{21}(\bm{v}) & D_{22}(\bm{v}) 
\end{aligned} \right).
$$
Accordingly, the corresponding second‑order modified upwind finite difference approximation of $\bm{v} w - \bm{D}(\bm{v}) \nabla w$ is given by
$$
  \bm{\mal{B}}(w)\bm{v} - \bm{\mal{D}}(\bm{v}) \nabla_{h} w = \left(
  \begin{aligned}
      &v^{x}\,\mal{H}_{x} w\\
      &v^{y}\,\mal{H}_{y} w
  \end{aligned} \right)
  - \left(
  \begin{aligned}
     & \underline{D}_{1,1}\bigl(v^{x},\Tvu v^{y}\bigr) 
     && D_{1,2}\bigl(v^{x},\Tvu v^{y} \bigr) \Tvu \\
     & D_{2,1} \bigl(\Tuv v^{x},v^{y}\bigr)\Tuv 
     && \underline{D}_{2,2}\bigl(\Tuv v^{x},v^{y}\bigr) 
  \end{aligned} \right) \nabla_{h} w,
$$
where 
$\Tuv:~\mal{U} \rightarrow \mal{V}$ and $\Tvu:~\mal{V} \rightarrow \mal{U}$ are local two-dimensional average operators, defined componentwise as
$$
  [\Tuv v^{x} ]_{i,j+1/2} = \f{v^{x}_{i-1/2,j} + v^{x}_{i+1/2,j} + v^{x}_{i-1/2,j+1} + v^{x}_{i+1/2,j+1} }{4},
$$
$$
 [\Tvu v^{y} ]_{i+1/2,j} = \f{v^{y}_{i,j-1/2} + v^{y}_{i+1,j-1/2} + v^{y}_{i,j+1/2} + v^{y}_{i+1,j+1/2} }{4}.
$$

\begin{example}\label{exm:e1}  For the first example, the spatial domain is selected as $\Omega=(0,1)^2$. We consider model \eqref{mod} with the following manufactured exact solutions:
 	\[
 	c(\bm{x},t) = 8t^{2}e^{\f{-(x-0.5)^2+(y-0.5)^2}{0.02}}( x^2(x-1)^2 + y^2(y-1)^2),\quad
 	p(\bm{x},t) = t^3\sin(\pi x)\sin(\pi y),
 	\]
 	\[
 	\bm{u}(\bm{x},t) = t^3(x(x-1)(2x-1),~y(y-1)(2y-1))^{\top},
 	\]
where the model physical parameters are specified as
 	\[
 	\phi(\bm{x}) = (x+y+1)^2/10,\quad
 	k(\bm{x}) = (x+y+1)^3,\quad
 	\mu(c) = 1 + c^2,\quad
 	\]
 	\[
 	q_P(\bm{x},t) = \cos(2\pi(x+y+t)) -2,\quad
 	\bm{D}(\bm{x},\bm{u}) = \phi(\bm{x}) ( 0.1\bm{I} + \bm{u} \bm{u}^{\top} ).
 	\]
\end{example}

This example is used to evaluate  both the convergence behavior and structure-preserving properties of the proposed BE-BCFD-P scheme \eqref{scm:be:u}--\eqref{scm:be:min} and the CN-BCFD-P scheme \eqref{scm:cn:c}--\eqref{scm:cn:u}, as well as their BE-BCFD-P2 and CN-BCFD-P2 counterparts, wherein the weighted $L^2$-projection replaced by the weighted $H^1$-projection. The errors and convergence orders for the velocity $\bm{u}$, pressure $p$, and concentration $c$ are reported in Tables \ref{tab:e1:bel2}–\ref{tab:e1:cnh1} for all four schemes. Furthermore, Fig. \ref{fig:e1-maxmin} presents the evolution of the extrema of the projection solution $C_h$ and the prediction solution $\widehat{C}_h$, the number of iterations  required to solve for the Lagrange multiplier $\gamma^n$, and the mass errors. These results are obtained using the BE-BCFD-P and BE-BCFD-P2 schemes with $N_{t} = 1600$ and $N_{x}=N_{y}=80$, and the CN-BCFD-P and CN-BCFD-P2 schemes with $N_{t}=N_{x}=N_{y}=80$. Basically, we have the following observations:
\begin{table}[!tbp] \small
	\setlength{\abovecaptionskip}{0.05cm} 
	\centering
	\caption{(Example \ref{exm:e1}) Errors and convergence orders of the BE-BCFD-P scheme.}
	\label{tab:e1:bel2}
	{
        \begin{tabular*}{0.9\columnwidth}{@{\extracolsep\fill}ccccccccc@{\extracolsep\fill}}
		\toprule
		{$N_{t}$} &{$N_{x}=N_{y}$} & $\|e_{c}\|_{\rm M}$ & Order & $\|e_{p}\|_{\rm M}$ & Order & $\|\bm{e}_{\bm{u}}\|_{\rm T}$ & Order & CPU time\\
		\midrule
        25    &10  &1.49e-02  &---	 &4.20e-02  &---   &2.66e-02  &---	 &0.38 \\
        100	  &20  &3.35e-03  &2.15	 &1.01e-02  &2.06  &8.17e-03  &1.70  &1.37\\
        400	  &40  &8.56e-04  &1.97	 &2.31e-03  &2.13  &2.01e-03  &2.02	 &10.12\\
        1600  &80  &2.15e-04  &1.99	 &5.14e-04  &2.17  &5.08e-04  &1.98  &130.69\\	
		\bottomrule
	\end{tabular*}}
\end{table}
\begin{table}[!tbp] \small
	\setlength{\abovecaptionskip}{0.05cm} 
	\centering
	\caption{(Example \ref{exm:e1}) Errors and convergence orders of the BE-BCFD-P2 scheme.}
	\label{tab:e1:beh1}
	{
        \begin{tabular*}{0.9\columnwidth}{@{\extracolsep\fill}ccccccccc@{\extracolsep\fill}}
		\toprule
		{$N_{t}$} &{$N_{x}=N_{y}$} & $\|e_{c}\|_{\rm M}$ & Order & $\|e_{p}\|_{\rm M}$ & Order & $\|\bm{e}_{\bm{u}}\|_{\rm T}$ & Order & CPU time\\
		\midrule
        25    &10  &1.49e-02  &---	 &4.20e-02  &---   &2.66e-02  &---	  &0.55\\
        100	  &20  &3.35e-03  &2.15	 &1.01e-02  &2.06  &8.17e-03  &1.70   &2.15\\
        400	  &40  &8.55e-04  &1.97	 &2.31e-03  &2.13  &2.01e-03  &2.02	  &17.19\\
        1600  &80  &2.15e-04  &1.99	 &5.14e-04  &2.17  &5.08e-04  &1.98   &155.81\\	
		\bottomrule
	\end{tabular*}}
\end{table}
\begin{table}[!tbp] \small
	\setlength{\abovecaptionskip}{0.05cm} 
	\centering
	\caption{(Example \ref{exm:e1}) Errors and convergence orders of the CN-BCFD-P scheme.}
	\label{tab:e1:cnl2}
	{ 
        \begin{tabular*}{0.9\columnwidth}{@{\extracolsep\fill}ccccccccc@{\extracolsep\fill}}
		\toprule
		{$N_{t}$} &{$N_{x}=N_{y}$} & $\|e_{c}\|_{\rm M}$ & Order & $\|e_{p}\|_{\rm M}$ & Order & $\|\bm{e}_{\bm{u}}\|_{\rm T}$ & Order & CPU time\\
		\midrule
		10	 &10	 &1.38e-02  &---	 &4.22e-02  &---	 &2.83e-02  &---	 &0.21\\
        20	 &20	 &3.08e-03  &2.16	 &1.02e-02  &2.05	 &8.31e-03  &1.77	 &0.41\\
        40	 &40	 &7.88e-04  &1.97	 &2.32e-03  &2.14	 &2.03e-03  &2.03	 &1.23\\
        80	 &80	 &1.98e-04  &1.99	 &5.17e-04  &2.17	 &5.13e-04  &1.98	 &7.09\\
		\bottomrule
	\end{tabular*}}
\end{table}
\begin{table}[!tbp] \small
	\setlength{\abovecaptionskip}{0.05cm} 
	\centering
	\caption{(Example \ref{exm:e1}) Errors and convergence orders of the CN-BCFD-P2 scheme.}
	\label{tab:e1:cnh1}
	{ 
        \begin{tabular*}{0.9\columnwidth}{@{\extracolsep\fill}ccccccccc@{\extracolsep\fill}}
		\toprule
		{$N_{t}$} &{$N_{x}=N_{y}$} & $\|e_{c}\|_{\rm M}$ & Order & $\|e_{p}\|_{\rm M}$ & Order & $\|\bm{e}_{\bm{u}}\|_{\rm T}$ & Order & CPU time\\
		\midrule
		10	 &10	 &1.37e-02  &---	 &4.22e-02  &---	 &2.82e-02  &---	 &0.35\\
        20	 &20	 &3.08e-03  &2.15	 &1.02e-02  &2.05	 &8.31e-03  &1.76	 &0.71\\
        40	 &40	 &7.87e-04  &1.97	 &2.32e-03  &2.14	 &2.03e-03  &2.03	 &3.61\\
        80	 &80	 &1.98e-04  &1.99	 &5.17e-04  &2.17	 &5.13e-04  &1.98	 &24.14\\
		\bottomrule
	\end{tabular*}}
\end{table}
\begin{itemize}
    \item[(i)] Both the BE-BCFD-P and BE-BCFD-P2 schemes exhibit first-order accuracy in time and second-order accuracy in space for all the physical variables of interest, whereas the CN-BCFD-P and CN-BCFD-P2 schemes achieve second-order accuracy in both time and space. These numerical convergence rates are fully consistent with the theoretical error estimates established in Theorems \ref{thm:err:be} and \ref{thm:err:cn}. 

    \item[(ii)] The weighted $H^{1}$-projection and $L^{2}$-projection schemes yield essentially identical errors, but in our simulations the latter ones, i.e., the BE-BCFD-P scheme \eqref{scm:be:u}--\eqref{scm:be:min} and the CN-BCFD-P scheme \eqref{scm:cn:c}--\eqref{scm:cn:u}, are computationally more efficient. First, as observed in Fig. \ref{fig:e1-maxmin}, the $L^{2}$-projection schemes require fewer iterations than their $H^{1}$-projection counterparts. Second, they involve solving only a single-variable nonlinear algebraic equation, whereas the $H^{1}$-projection schemes necessitate solving a system of nonlinear algebraic equations, which undoubtedly increases the computational cost.
    
    \item[(iii)] Compared with the first-order-in-time schemes, the second-order-in-time schemes allow for larger time steps, consume less CPU time, and yield numerical solutions of comparable accuracy. For example, the BE-BCFD-P and BE-BCFD-P2 schemes with $N_{t} = 1600$ and $N_{x}=N_{y}=80$ produce error results for all physical variables that are almost identical to those obtained by the CN-BCFD-P and CN-BCFD-P2 schemes with $N_{t}=N_{x}=N_{y}=80$. However, while the CN-BCFD-P scheme costs only about $7$ seconds, the BE-BCFD-P scheme consumes over $2$ minutes of CPU time. Thus, the second-order-in-time schemes prove to be more efficient in practical simulations.

    \item[(iv)] It is observed that, prior to projection, the first-order-in-time schemes begin to violate the lower bound around $t \approx 0.93$, and the second-order-in-time schemes do so around $t \approx 0.36$. No violations of the upper bound are observed in the simulations for any scheme. After projection, however, all four schemes yield solutions that naturally satisfy both the physical lower and upper bounds on concentration. Moreover, as seen the predicted and projected maximum concentration curves almost overlap for all schemes; to better visualize their discrepancy, we also plot the enlarged difference in Fig. \ref{fig:e1-maxmin}. As long as the prediction remains within the desired physical bounds, the two maxima fully coincide; once a bound violation occurs, the projection step is triggered, and the difference between the predicted and projected maxima emerges. In addition, the mass errors for all schemes remain on the order of $10^{-14}$. Thus, all four projection schemes are both bound-preserving and mass-conserving.
\end{itemize}
\begin{figure}[!t]
	\centering
    \includegraphics[width=0.24\textwidth]{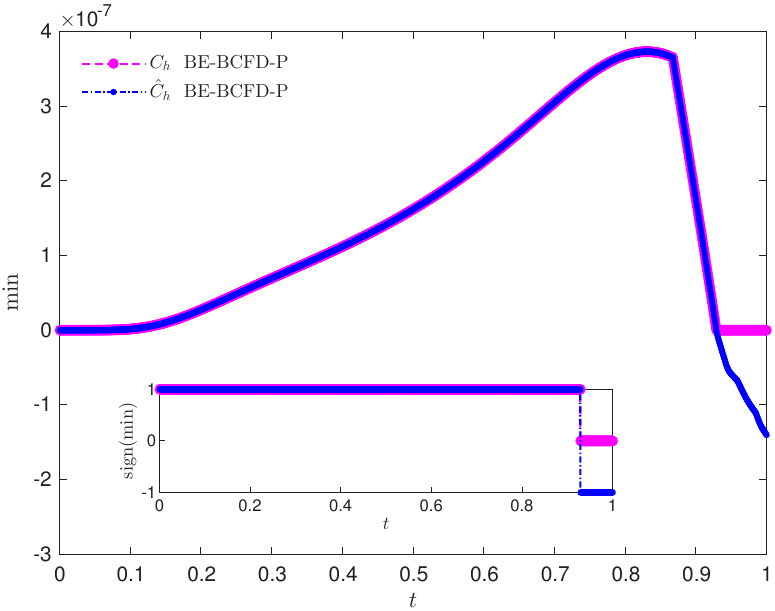}
    \includegraphics[width=0.24\textwidth]{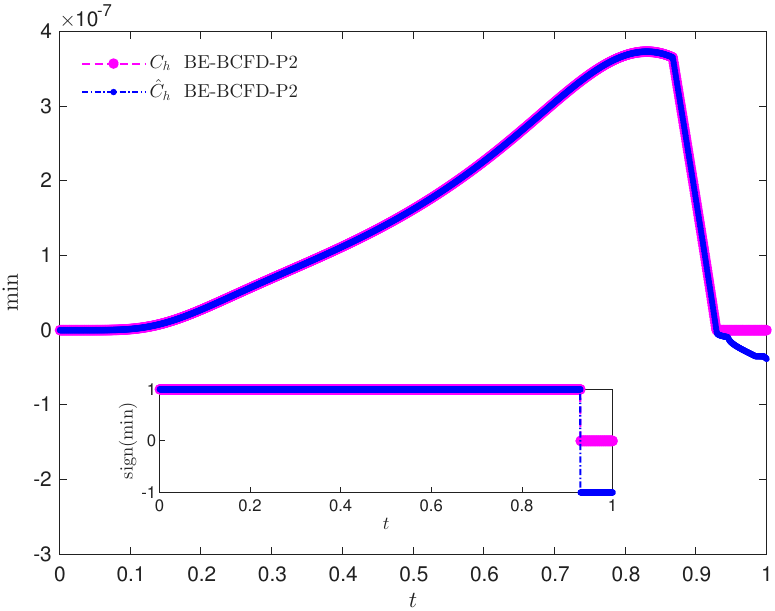}
    \includegraphics[width=0.24\textwidth]{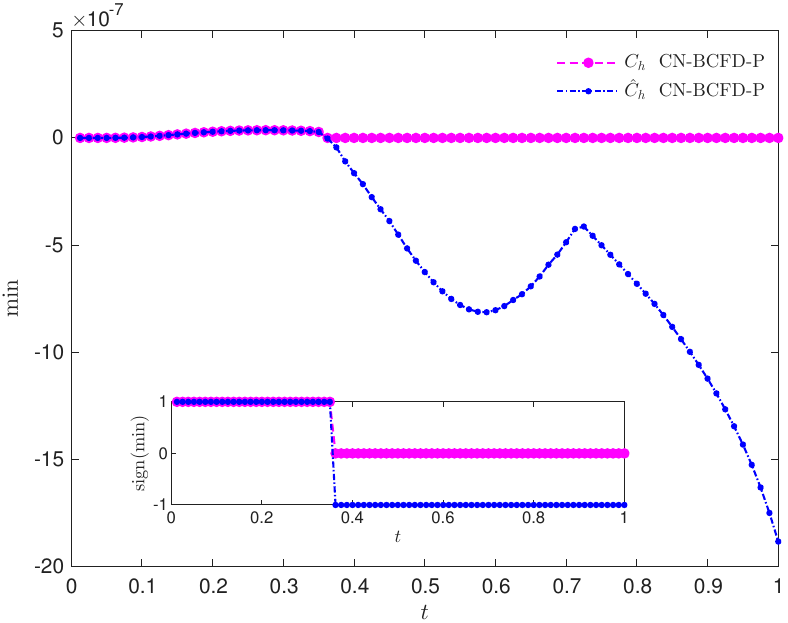}
    \includegraphics[width=0.24\textwidth]{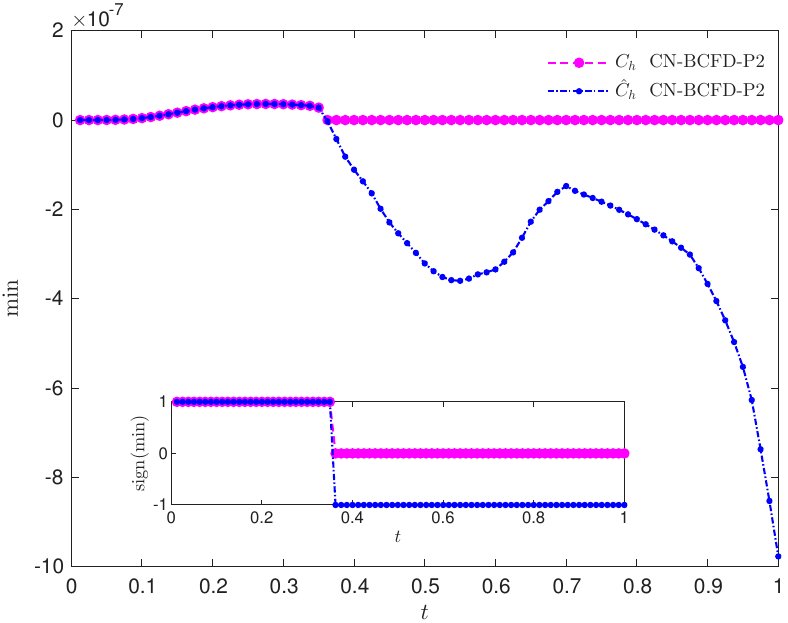}
    \includegraphics[width=0.24\textwidth]{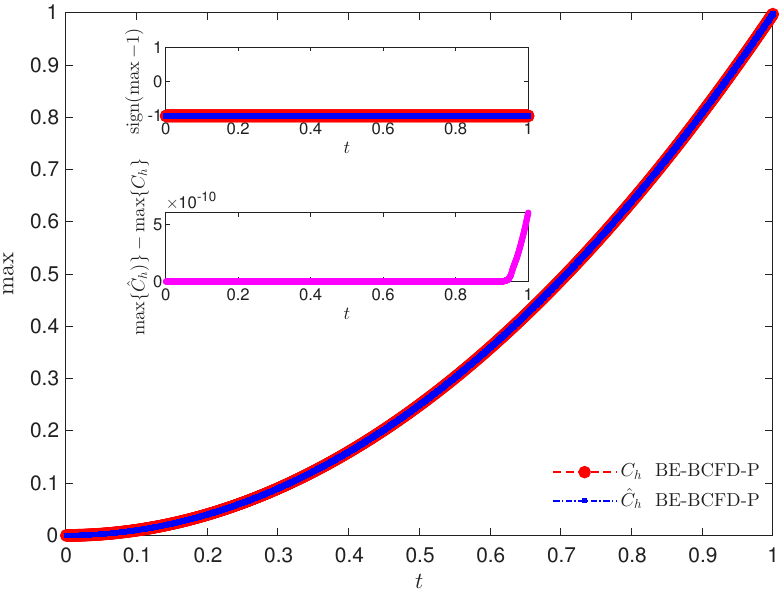}
    \includegraphics[width=0.24\textwidth]{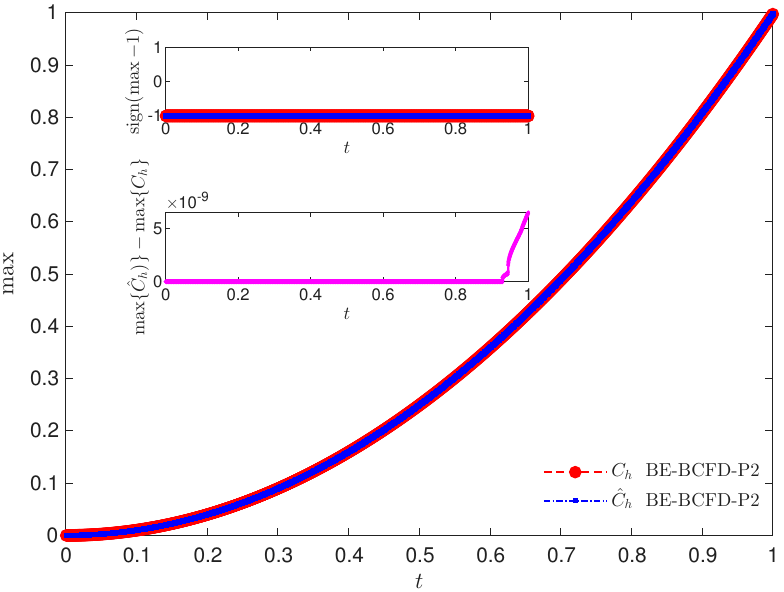}
    \includegraphics[width=0.24\textwidth]{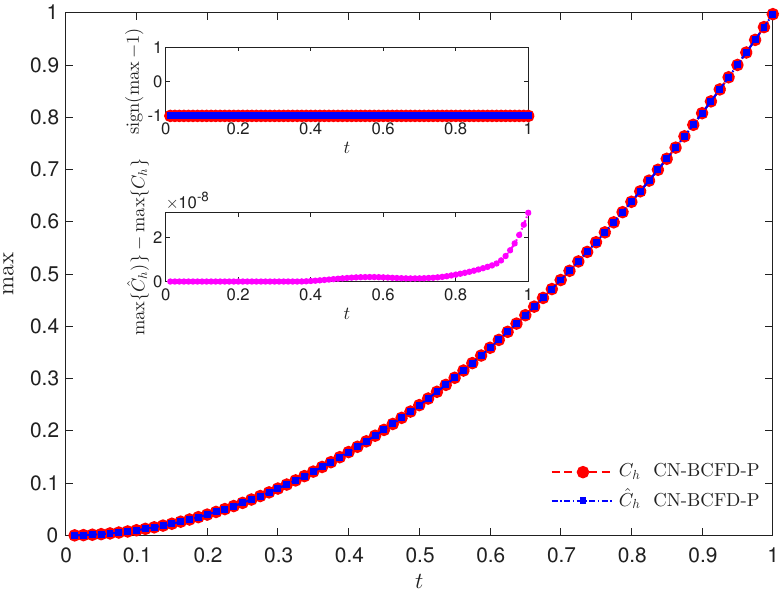}
    \includegraphics[width=0.24\textwidth]{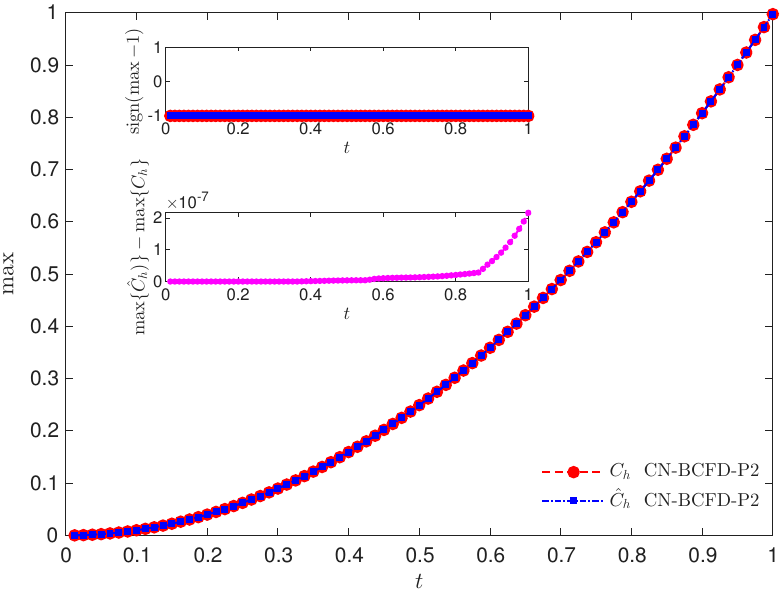}
    \includegraphics[width=0.24\textwidth]{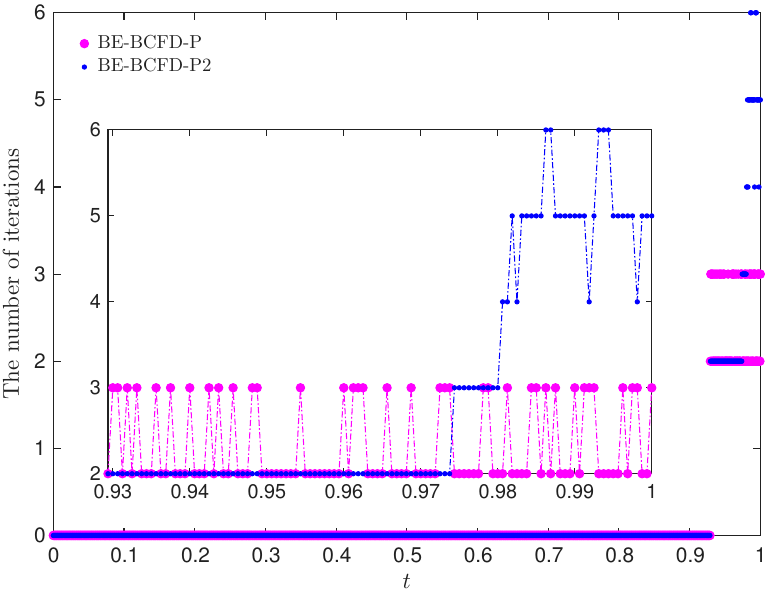}
    \includegraphics[width=0.24\textwidth]{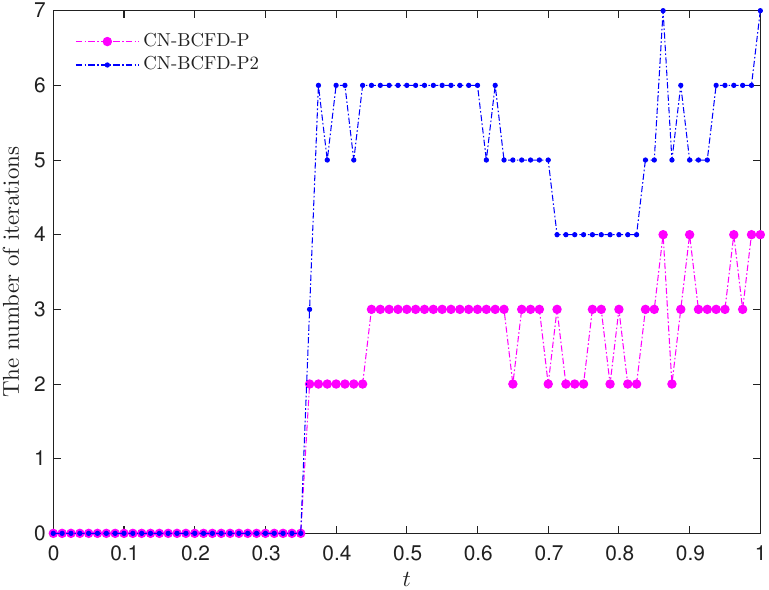}
    \includegraphics[width=0.24\textwidth]{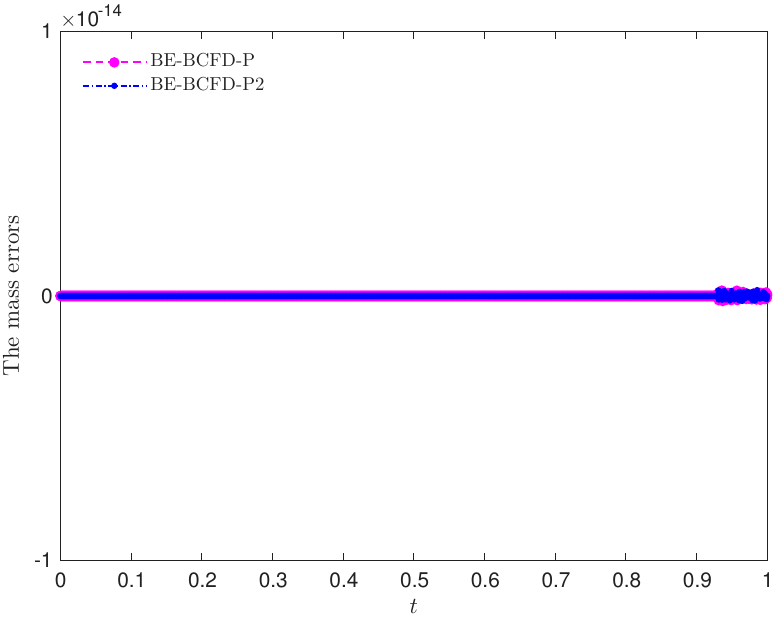}
    \includegraphics[width=0.24\textwidth]{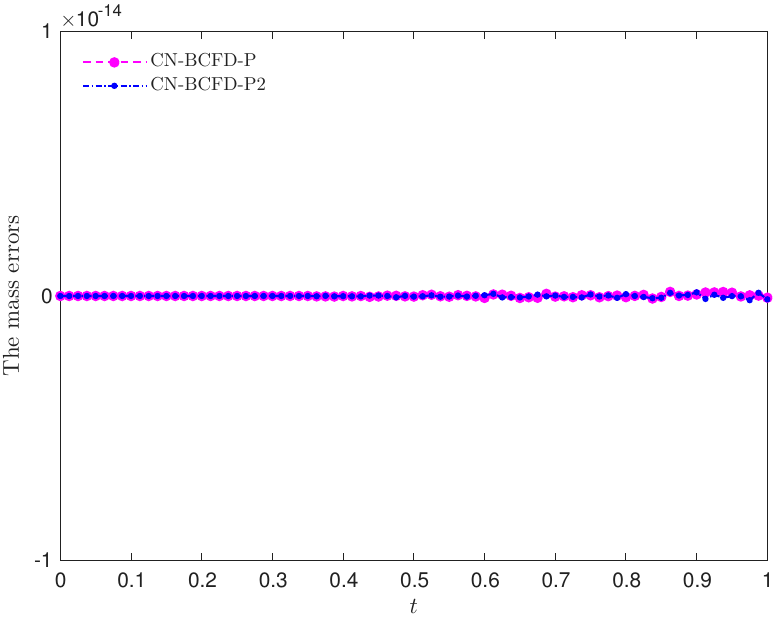}
    \caption{(Example \ref{exm:e1}) Evolution of the extrema of $C_{h}^{n}$ and $\widehat{C}_{h}^{n}$, the number of iterations and the mass errors for the BE-BCFD-P, BE-BCFD-P2, CN-BCFD-P and CN-BCFD-P2 schemes.}
    \label{fig:e1-maxmin}
\end{figure}

In summary, the second-order scheme with $L^{2}$-projection, namely the CN-BCFD-P scheme \eqref{scm:cn:c}--\eqref{scm:cn:u}, outperforms the other three schemes in both accuracy and efficiency. We therefore adopt this scheme in all the following simulations.

\begin{example}[\cite{WH00,CD07,VPV21,Eymard19,WRF26}]\label{exm:e2}
In this example, we simulate various incompressible displacement problems in a water-oil system within porous media to systematically assess the numerical performance of the proposed CN-BCFD-P scheme \eqref{scm:cn:c}--\eqref{scm:cn:u}. All numerical simulations are performed in the first quadrant of a standard five-spot well pattern for the horizontal reservoir of unit thickness, where the injection and production wells are located at opposite corners \cite{WH00,CD07,VPV21,Eymard19,WRF26}. The spatial domain is $\Omega = (0, 1000)^2$ $\text{ft}^2$, the time period is $[0, T] = [0, 3600]$ days, and the viscosity of the oil is $\mu_r = 1.0$ cp. The injection well is located at the upper-right corner $(1000,1000)$ of the domain with an injection rate $q_{I} = 30$ $\text{ft}^2/\text{day}$, and the production well is located at the lower-left corner $(0,0)$ with a production rate $q_{P} = - 30$ $\text{ft}^2/\text{day}$. The initial concentration and injection concentration of invading fluid are $c^{o} = 0$ and $c_{I}=1$, respectively.  In all simulations, we use a fairly coarse spatial grids with $h^{x}=h^{y}=20$ ft, and take $\tau = 120$ days.

As discussed in the introduction, a large mobility ratio $\mal{M}$ can trigger viscous fingering. In addition, dispersion and heterogeneity also play important roles in the presence of this phenomenon. Accordingly, we present four distinct tests in this example, varying in mobility ratio, dispersion, and permeability, to examine viscous fingering and displacement performance of the proposed scheme. The parameter configurations are specified below.
\begin{itemize}
    \item[\textbf{Case I.}] Homogeneous and isotropic porous media with the following parameters:
    \[
    \phi(\bm{x}) = 0.1,\quad k(\bm{x}) = 80~ {\rm md}, \quad d_{\rm mol} = 10~ {\rm ft}^{2}/{\rm day}, \quad d_{\rm long} = d_{\rm tran} = 0~ {\rm ft}/{\rm day}, \quad \mal M=1.
    \]    
        
    \item[\textbf{Case II.}] 
    Anisotropic dispersion in tensor form and an adverse mobility ratio:
    \[
    \phi(\bm{x}) = 0.1,~~ k(\bm{x}) = 80~ {\rm md}, ~~ d_{\rm mol} = 0~ {\rm ft}^{2}/{\rm day}, ~~ d_{\rm long} = 50~ {\rm ft}/{\rm day},~~  d_{\rm tran} = 5~ {\rm ft}/{\rm day}, ~~ \mal{M} = 41.
    \]
    
    \item[\textbf{Case III.}] Heterogeneous porous media with a discontinuous permeability field:
    \[
    k(\bm{x}) = \left\{ 
        \begin{aligned}
              & 80~ {\rm md}, && \Omega_{L}:=(0,1000)\times (0,500),\\
              & 20~ {\rm md}, && \Omega_{U}:=(0,1000)\times (500,1000),
        \end{aligned}
    \right.
    \]
    with all other parameters identical to those in Case II.
    
    \item[\textbf{Case IV.}] Piecewise porous media with with the following permeability and porosity distributions:
        \[
     \phi(\bm{x}) = \left\{ \begin{aligned}
         & 0.09, &\Omega_{1}:=(150,550)\times (150,550),\\
         & 0.1,  & \Omega_{2}:=\Omega \setminus \Omega_{1},
     \end{aligned} \right. \qquad 
     k(\bm{x}) = \left\{ \begin{aligned}
         & 25~{\rm md}, &\Omega_{1},\\
         & 80~{\rm md},  & \Omega_{2},
     \end{aligned} \right. 
       \]
   with all other parameters identical to those in Case II.
\end{itemize}   
\end{example}
\begin{figure}[!ht]
	\centering
    \includegraphics[width=0.24\textwidth]{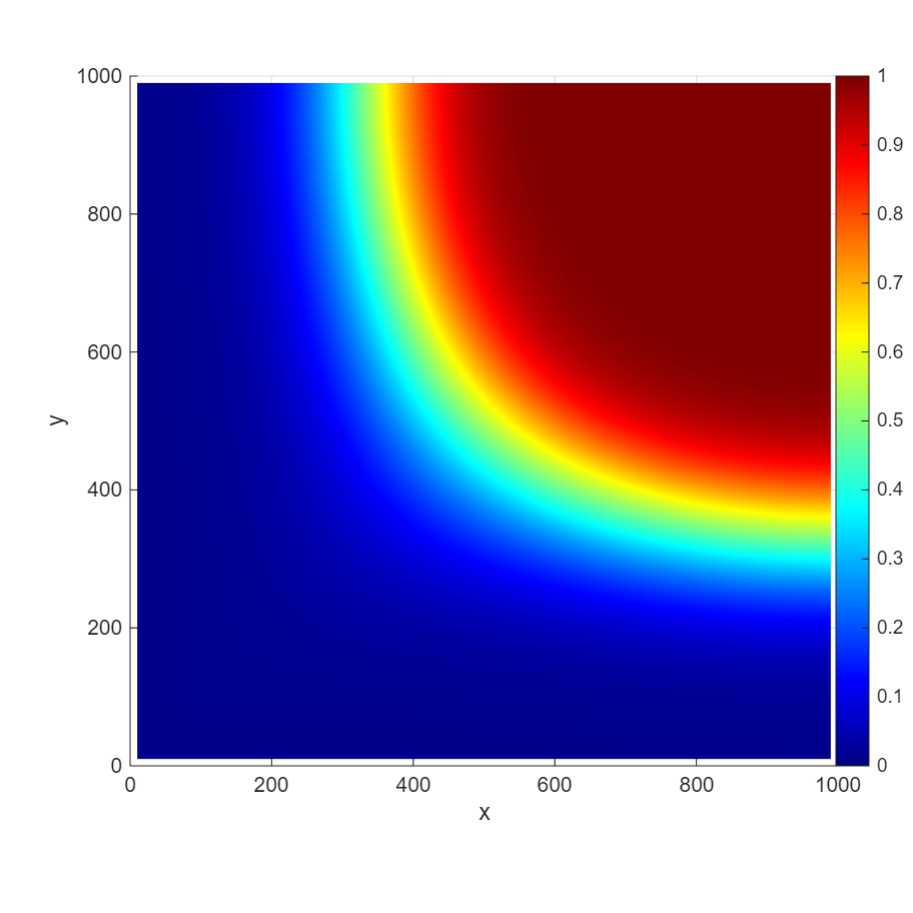}
    \includegraphics[width=0.24\textwidth]{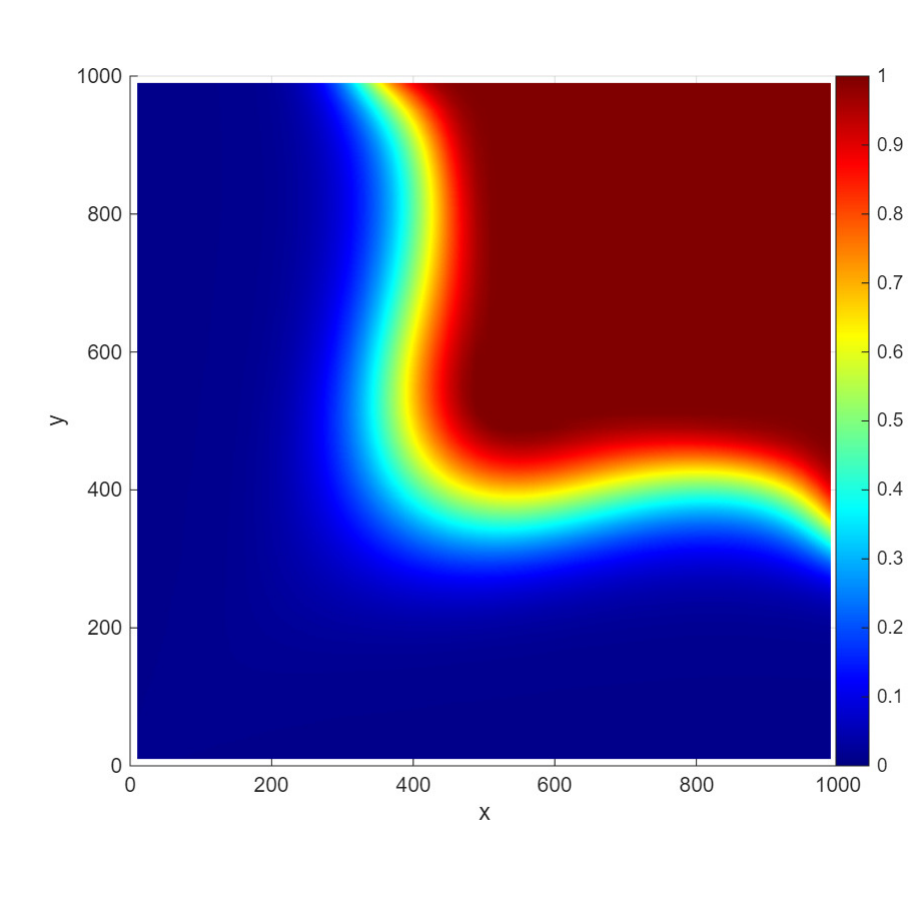}
    \includegraphics[width=0.24\textwidth]{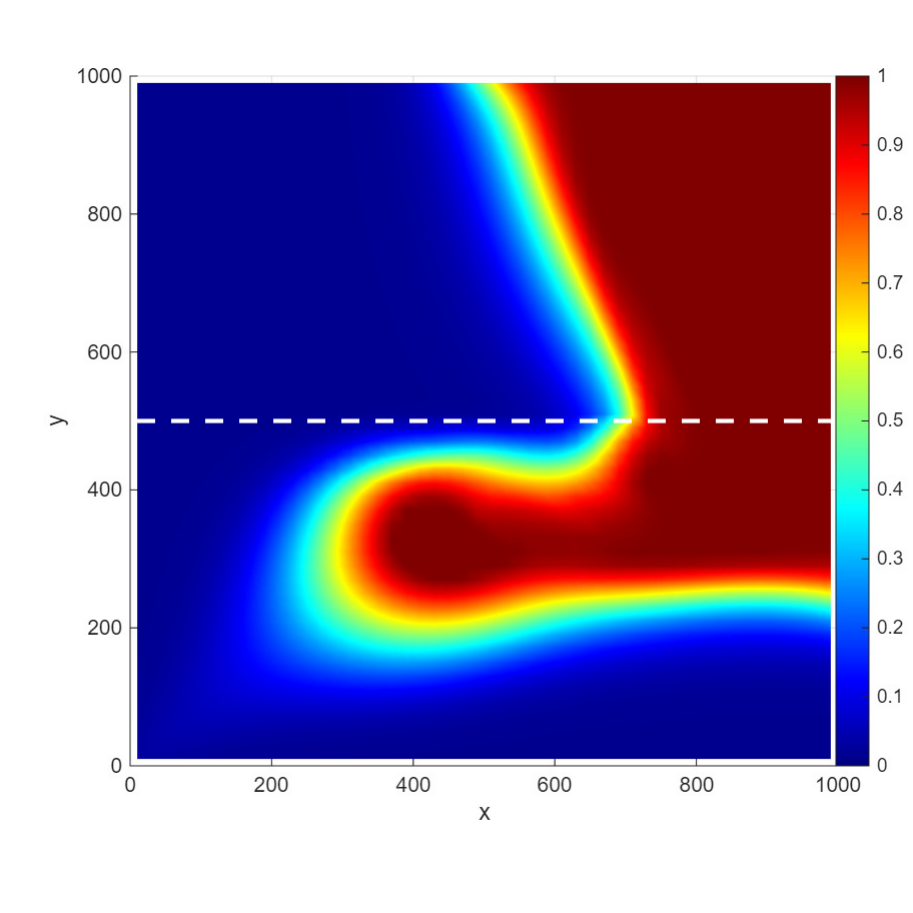}
    \includegraphics[width=0.24\textwidth]{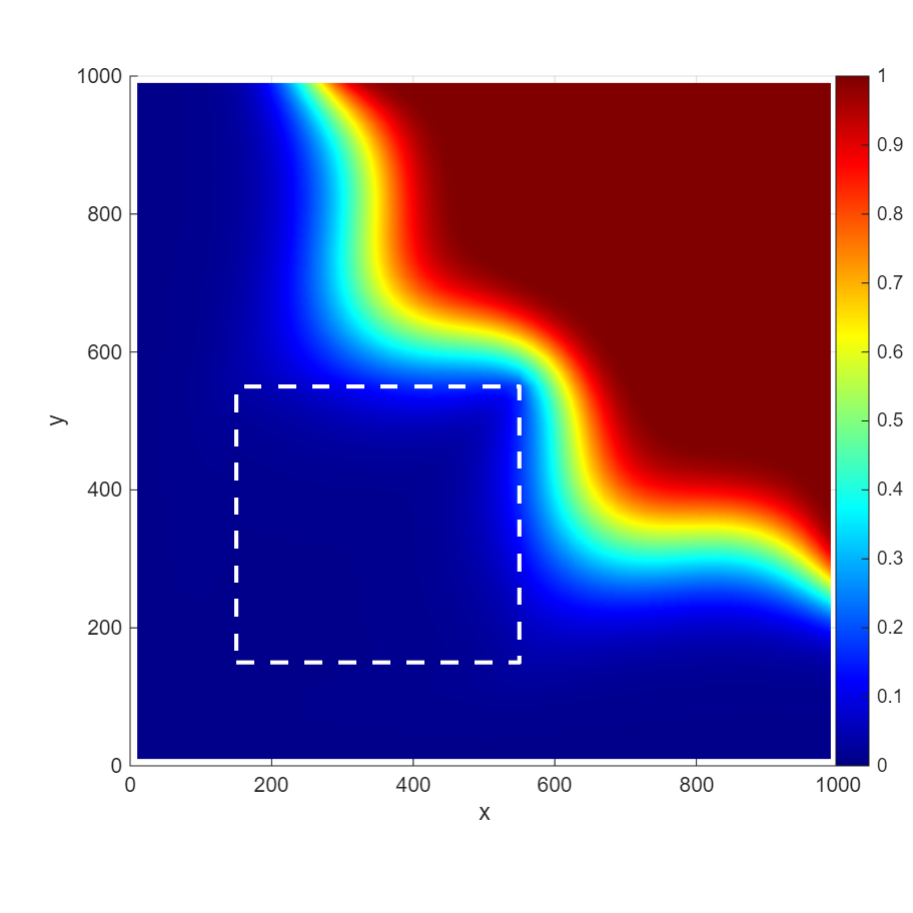}    
    \includegraphics[width=0.24\textwidth]{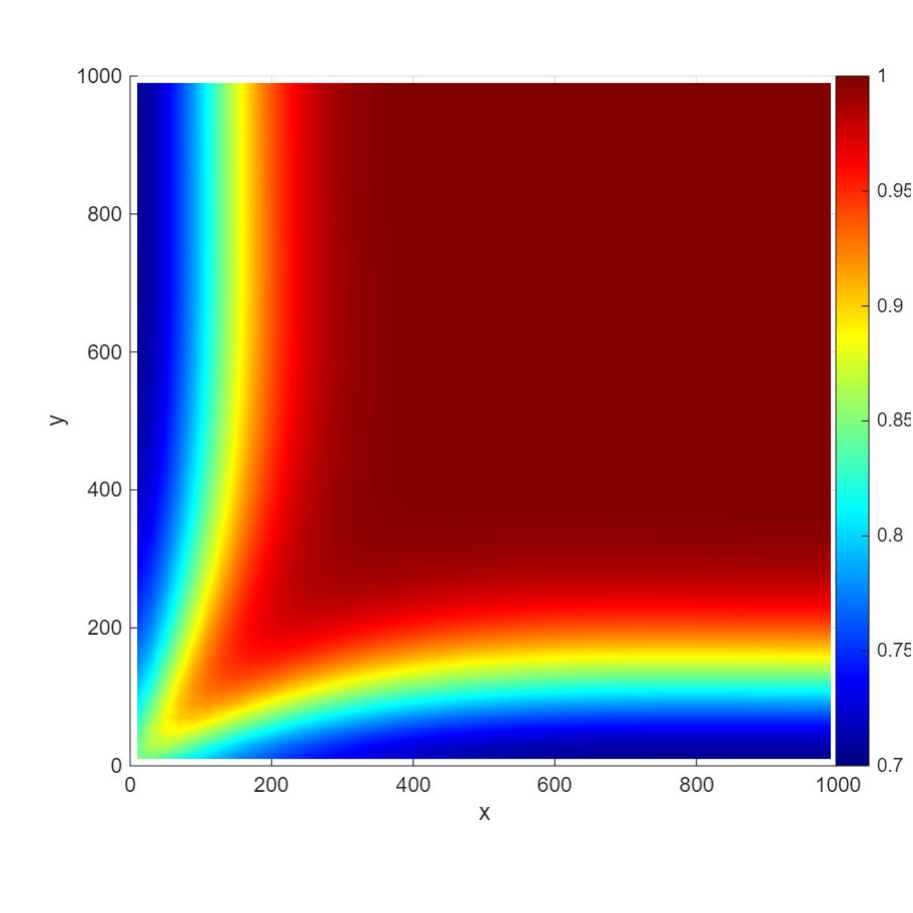}
    \includegraphics[width=0.24\textwidth]{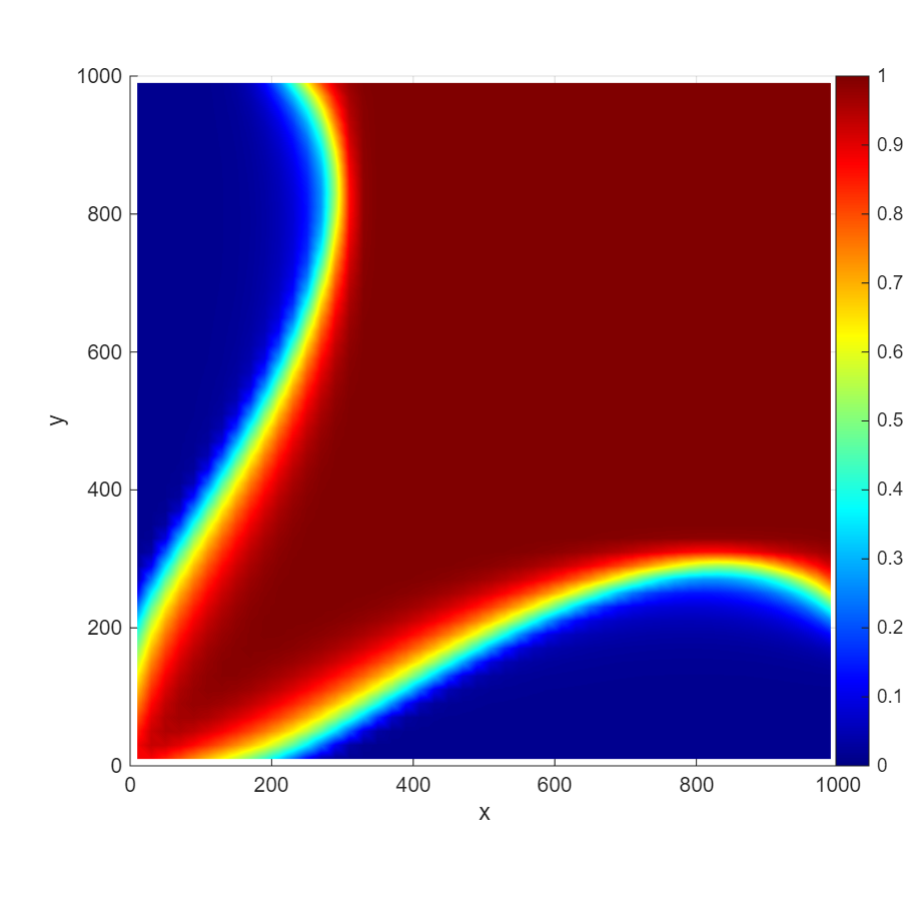}
    \includegraphics[width=0.24\textwidth]{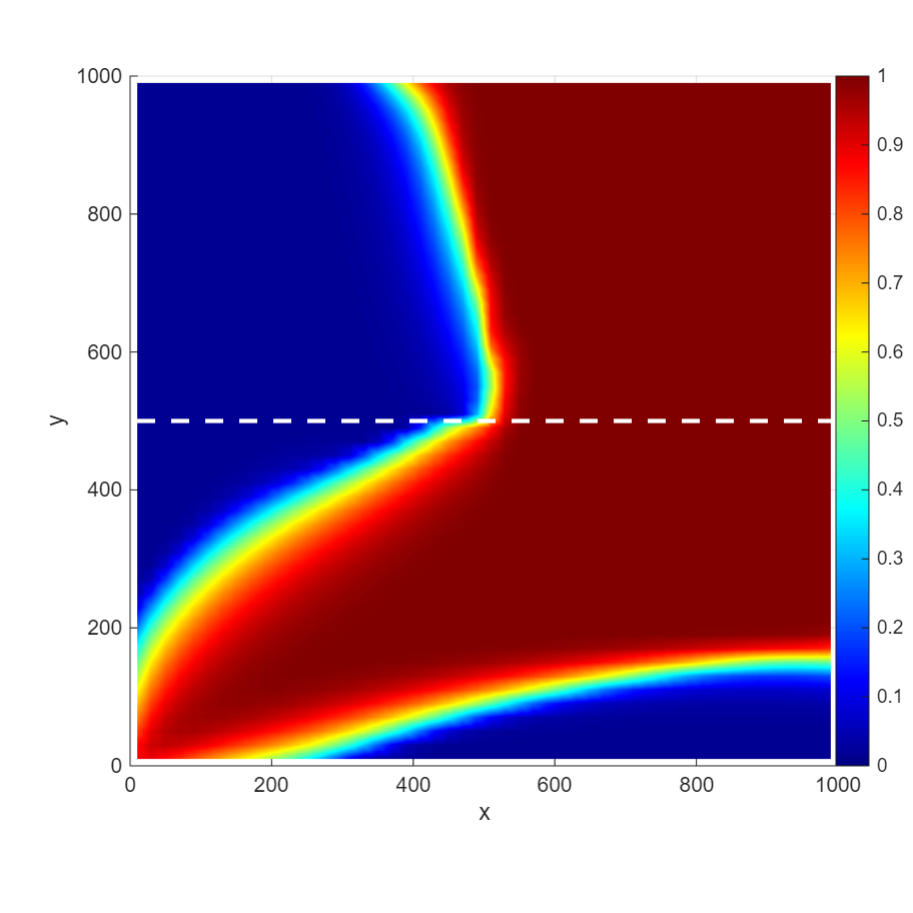}
    \includegraphics[width=0.24\textwidth]{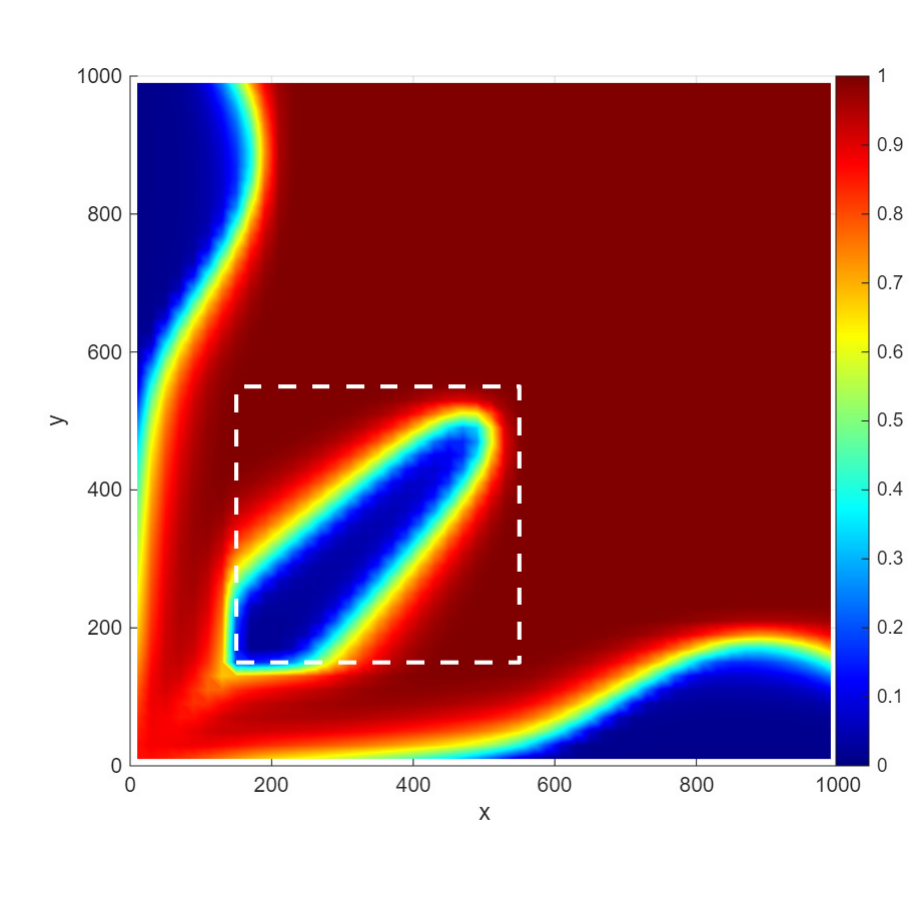}
    \setlength{\abovecaptionskip}{0.0cm} 
	  \setlength{\belowcaptionskip}{0.0cm}
	\caption{(Cases I-IV in Example \ref{exm:e2} (from left to right)) The concentration of the invading fluid at $t=3$ and $10$ years obtained by the CN-BCFD-P scheme  (from top to bottom), respectively.}
    \label{fig:e2-c}
\end{figure}

We first examine the displacement process simulated by the proposed CN-BCFD-P scheme for the four cases. Figure \ref{fig:e2-c} displays the concentration distributions of the invading fluid at $t=3$ and $t=10$ years for all cases. Several observations can be made:
\begin{itemize}
    \item In Case I, the concentration fields at $t=3$ years exhibit a concentric circular pattern, as expected given the homogeneous permeability, constant mobility, and purely molecular diffusion. In addition, owing to the influence of the no-flow boundary conditions and the locations of the injection and production wells, as time marches, the invading fluid propagates faster along the diagonal of the reservoir, as clearly seen from the first column of Fig. \ref{fig:e2-c} at $t=10$ years.

    \item In Case II, the large adverse mobility ratio causes a sharp variation in $\mu(c)$ (cf. \eqref{mod:mu}) across the fluid interface, leading to a rapid change in the Darcy velocity. As a result, viscous fingering develops, as shown in the second column of Fig. \ref{fig:e2-c}: the concentration front advances much faster along the diagonal than in Case I, causing early water breakthrough, a reduced swept area, and lower recovery.

    \item In Case III, the front initially advances faster vertically than horizontally, owing to the higher permeability in the lower domain $\Omega_{L}$, which yields a greater Darcy velocity. Subsequently, upon reaching $\Omega_{L}$, the invading fluid propagates horizontally at a significantly higher speed than  in the upper domain $\Omega_{U}$. 

    \item In Case IV, the permeability contrast creates a region that cannot be completely swept by the invading fluid. As shown in the fourth column of Fig. \ref{fig:e2-c}, as time evolves, the invading fluid advances from the injection well towards the production well; however, the low-permeability region $\Omega_{1}$ acts as a barrier. Owing to the lower flow resistance in the high-permeability region $\Omega_{2}$, the fluid preferentially migrates along these high-speed channels, leading to fingering. Consequently, two fingers develop that bypass the barrier and eventually reach the production well. 

    \item Compared to Case II, Case IV exhibits significantly higher sweep efficiency, leading to improved displacement and enhanced oil recovery. Therefore, it is recommended that production wells should be placed in low-permeability regions to maximize sweep efficiency. Furthermore, this observation also underpins the use of polymers in enhanced oil recovery: their high viscosity enables selecting blocking or permeability reduction in specific pores or flow zones, thereby redirecting fluid flow to optimize hydrocarbon recovery.
\end{itemize}

We next examine the preservation of physical structures, focusing on the bound-preserving property and mass conservation. Fig. \ref{fig:e2} presents the evolution of the extrema of projection concentration $C_{h}^{n}$ and predicted concentration $\widehat{C}_{h}^{n}$, the mass-conserving Lagrange multiplier $\gamma^{n}$, the number of iterations, and the relative mass errors, obtained by the CN-BCFD-P scheme. The results further demonstrate that the CN-BCFD-P scheme preserves both concentration bounds and mass conservation, whereas its prediction step \eqref{scm:cn:c} fails to maintain the bounds. Moreover, the secant method employed for computing $\gamma^{n}$ takes only $2 \sim 5$ iterations, confirming the efficiency of the proposed weighted $L^{2}$-projection.
\begin{figure}[!ht]
	\centering
    \includegraphics[width=0.24\textwidth]{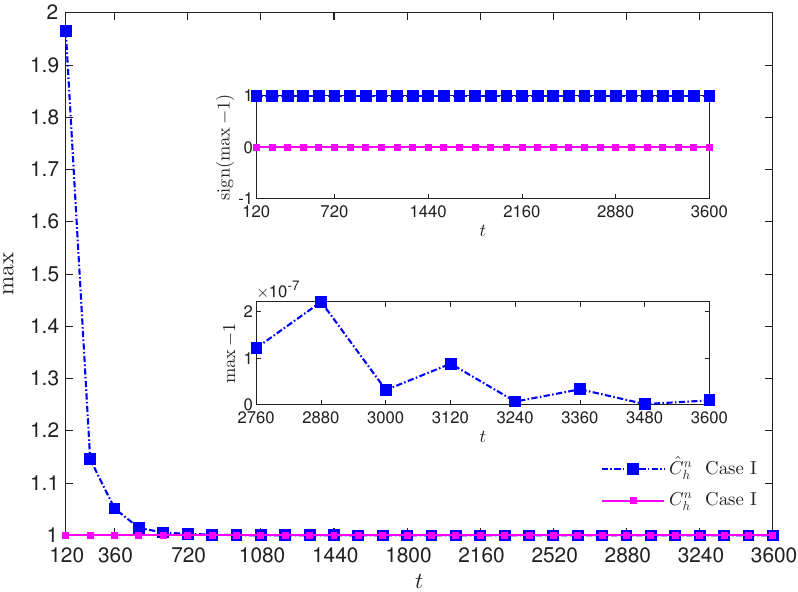}
    \includegraphics[width=0.24\textwidth]{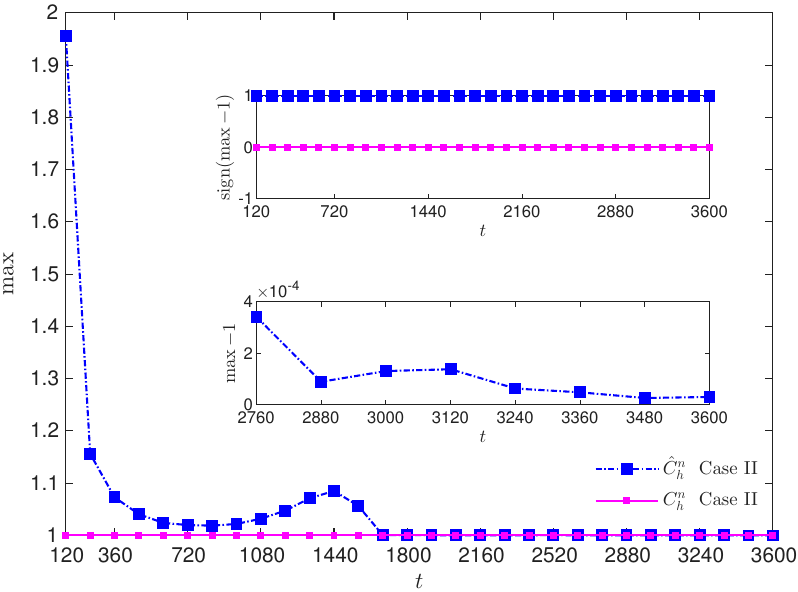}
    \includegraphics[width=0.24\textwidth]{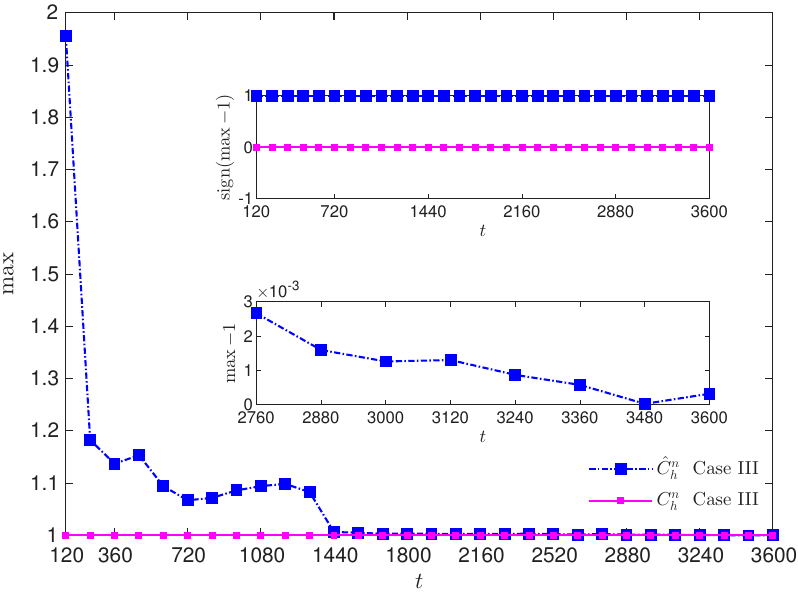}
    \includegraphics[width=0.24\textwidth]{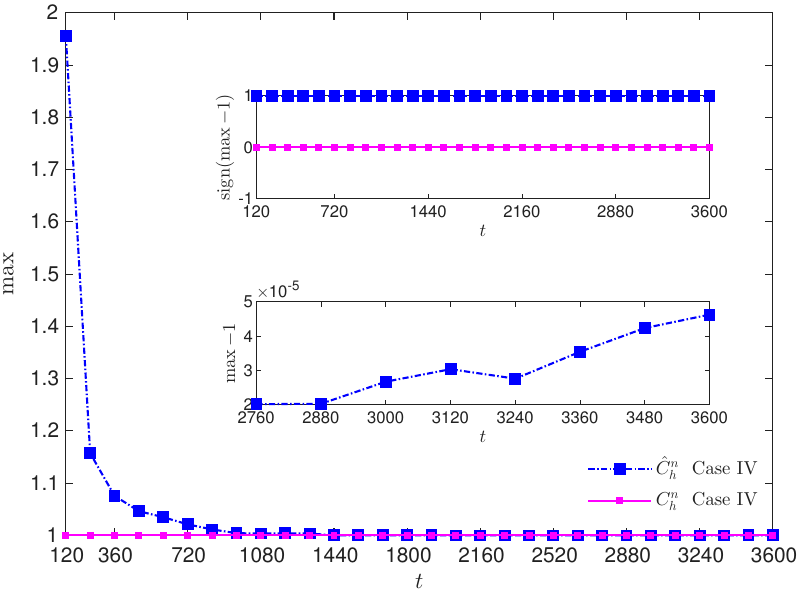}
    \includegraphics[width=0.24\textwidth]{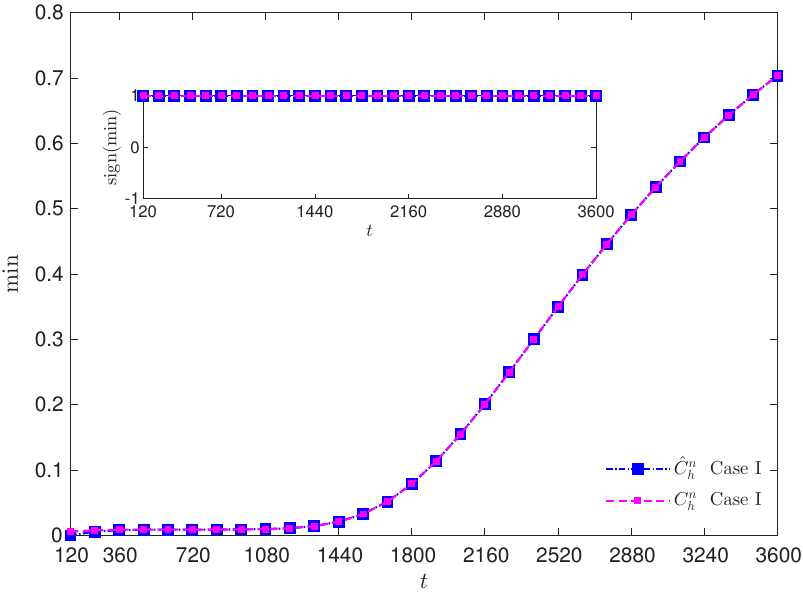}
    \includegraphics[width=0.24\textwidth]{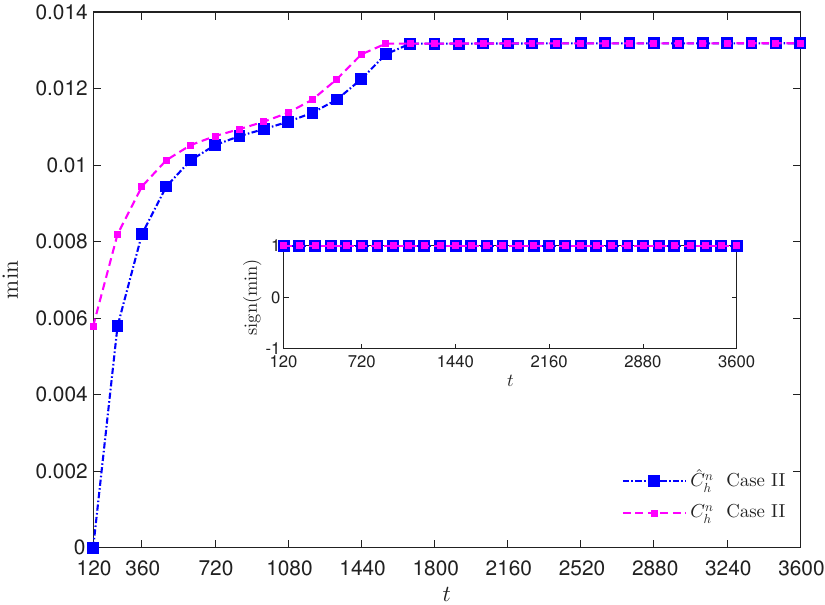}
    \includegraphics[width=0.24\textwidth]{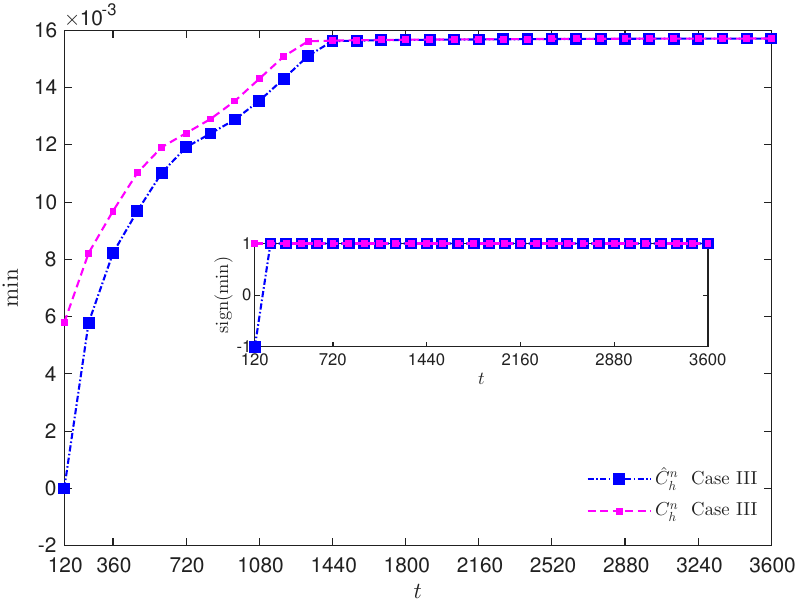}
    \includegraphics[width=0.24\textwidth]{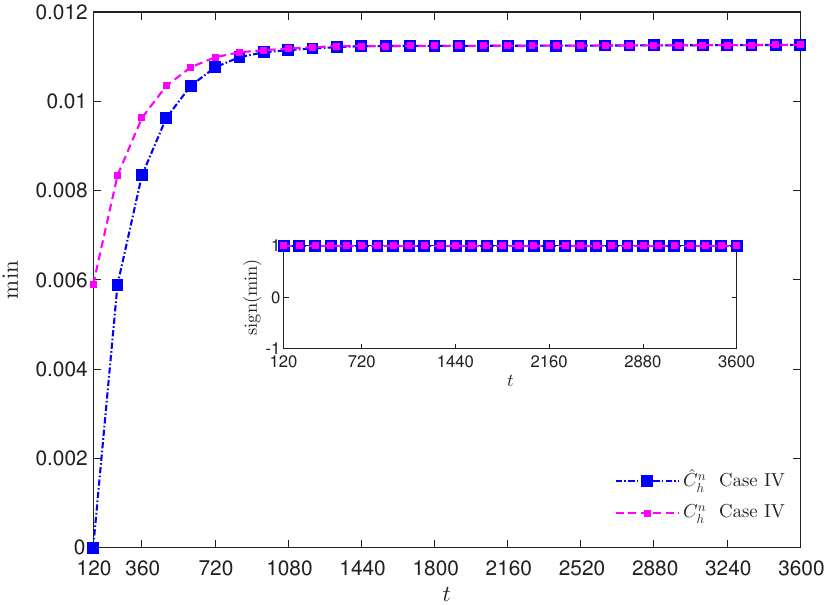}
    \includegraphics[width=0.24\textwidth]{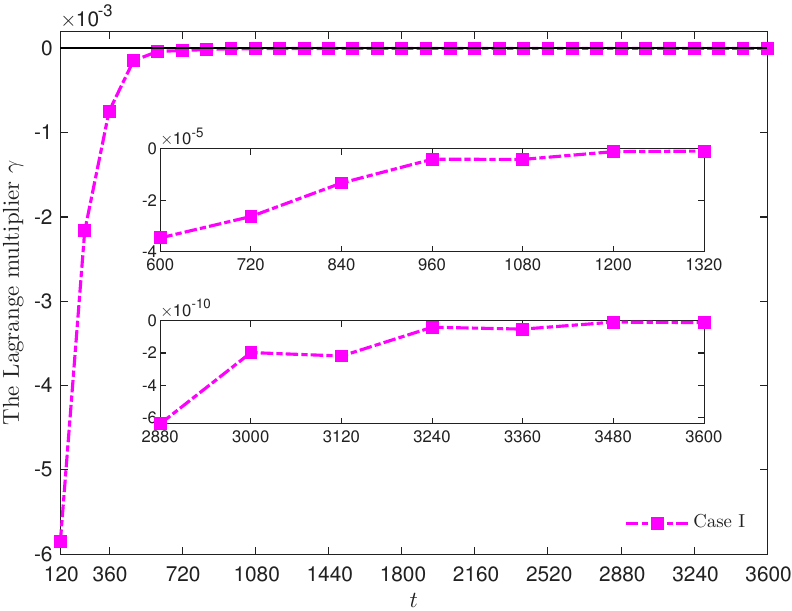}
    \includegraphics[width=0.24\textwidth]{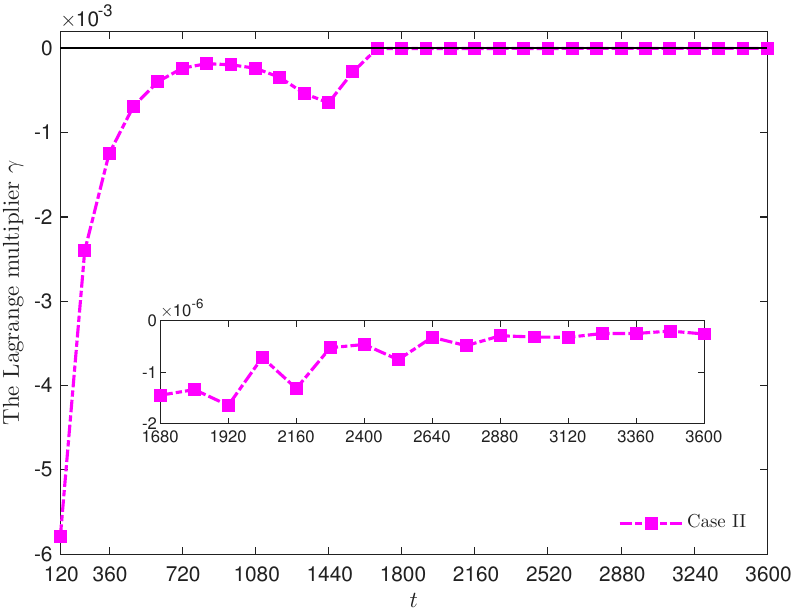}
    \includegraphics[width=0.24\textwidth]{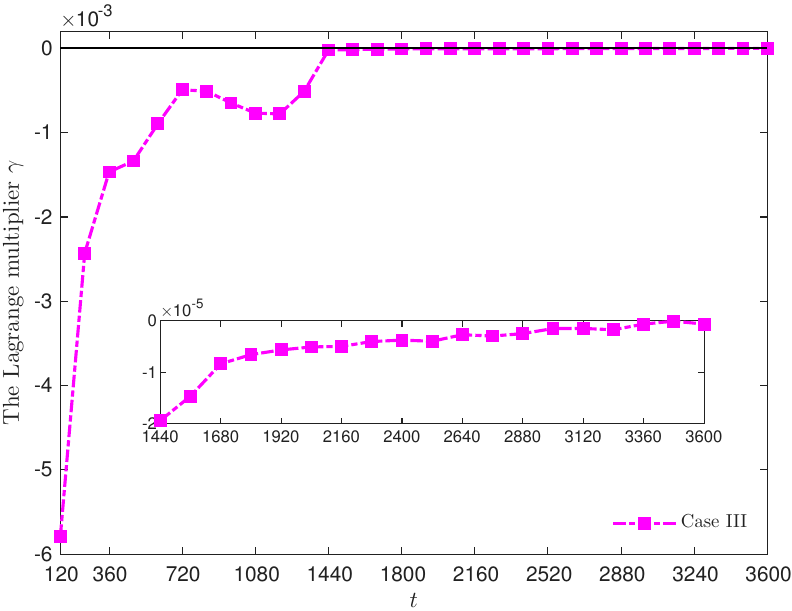}
    \includegraphics[width=0.24\textwidth]{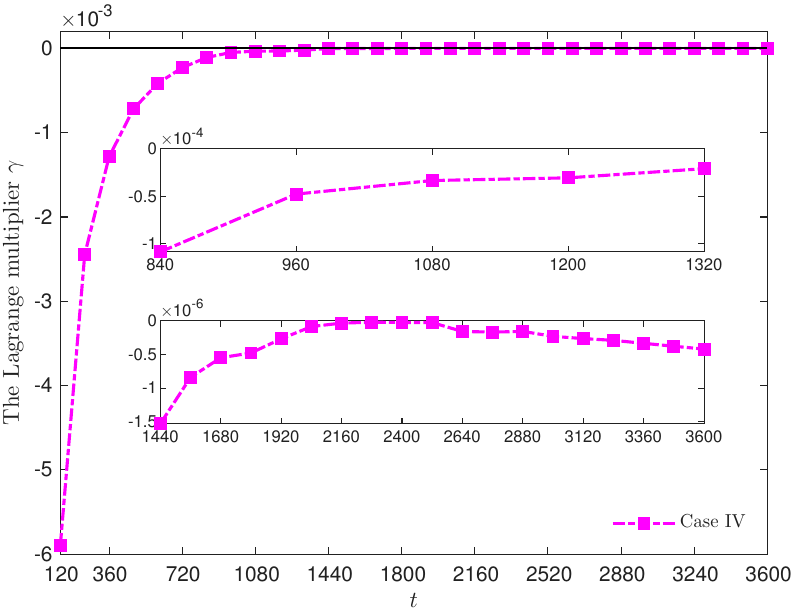}
    \includegraphics[width=0.24\textwidth]{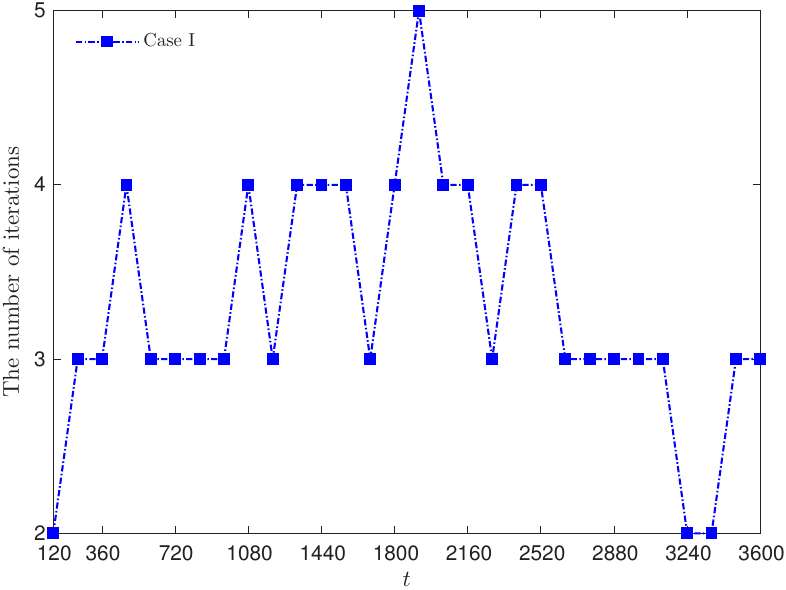}
    \includegraphics[width=0.24\textwidth]{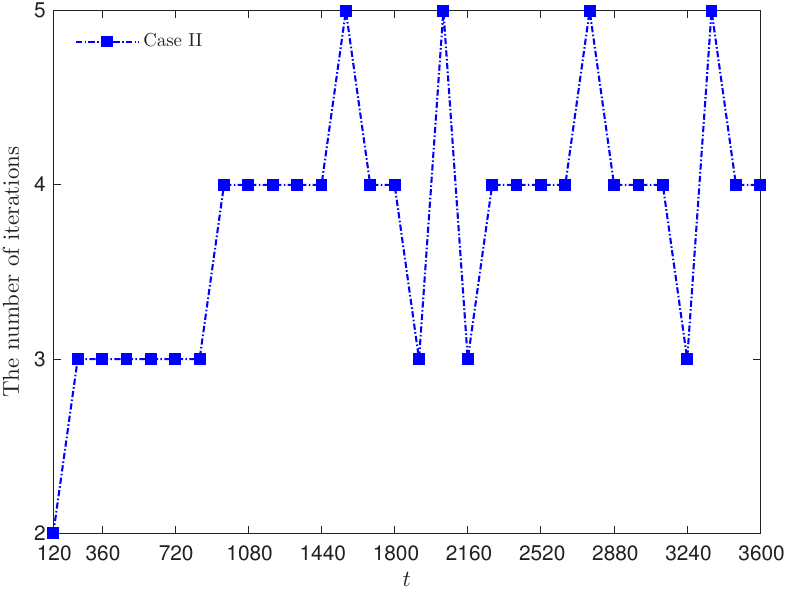}
    \includegraphics[width=0.24\textwidth]{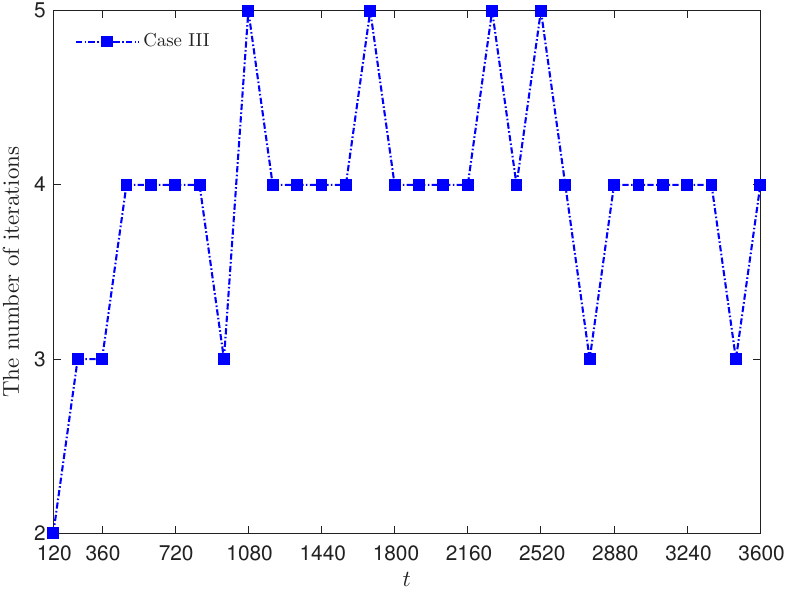}
    \includegraphics[width=0.24\textwidth]{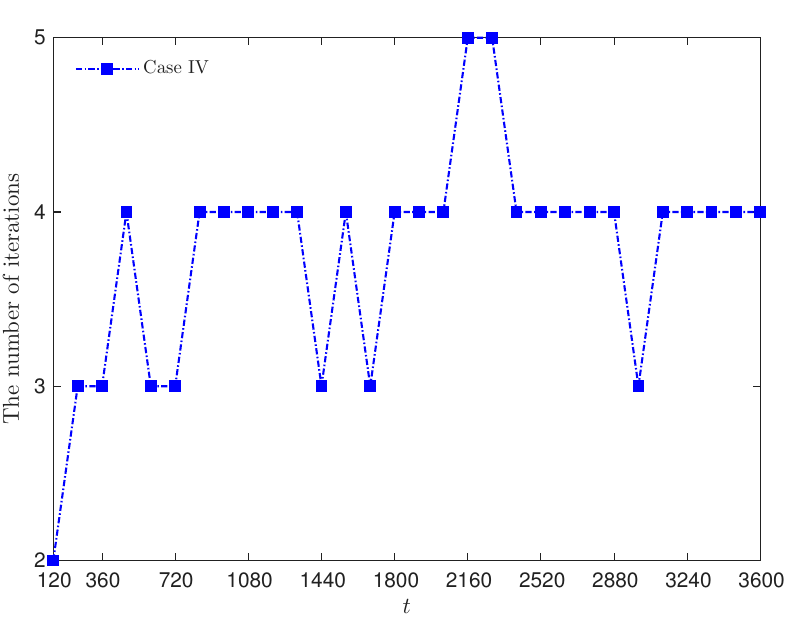}
    \includegraphics[width=0.24\textwidth]{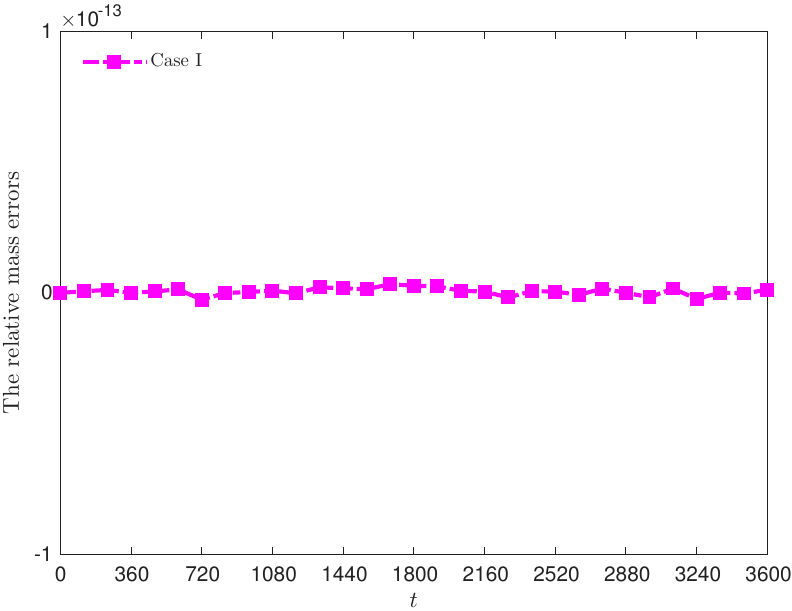}
    \includegraphics[width=0.24\textwidth]{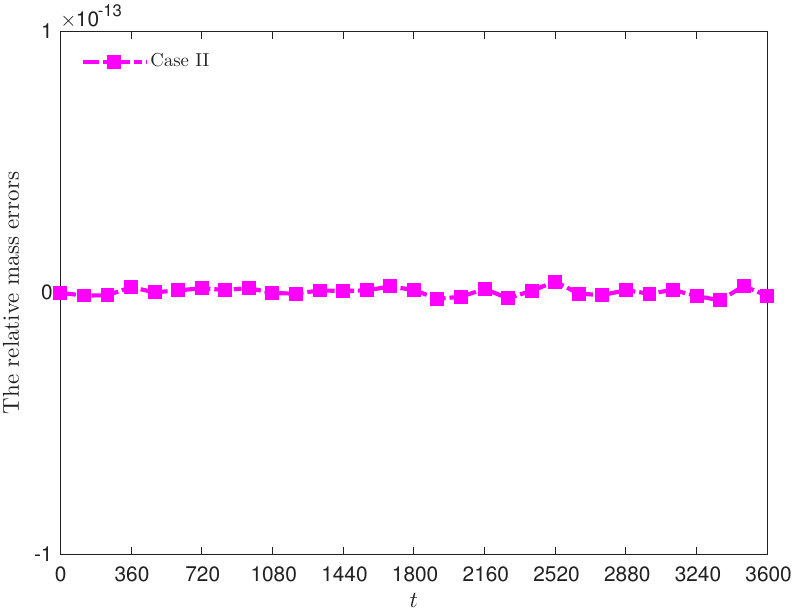}
    \includegraphics[width=0.24\textwidth]{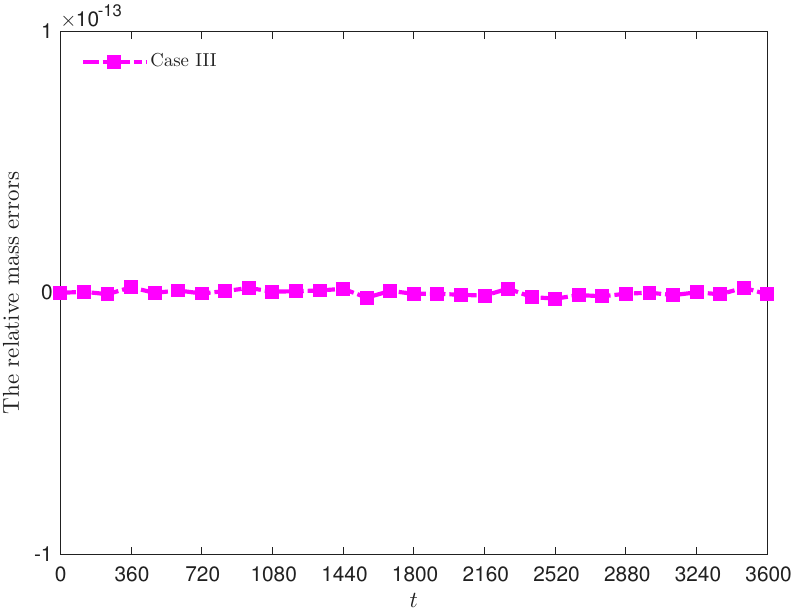}
    \includegraphics[width=0.24\textwidth]{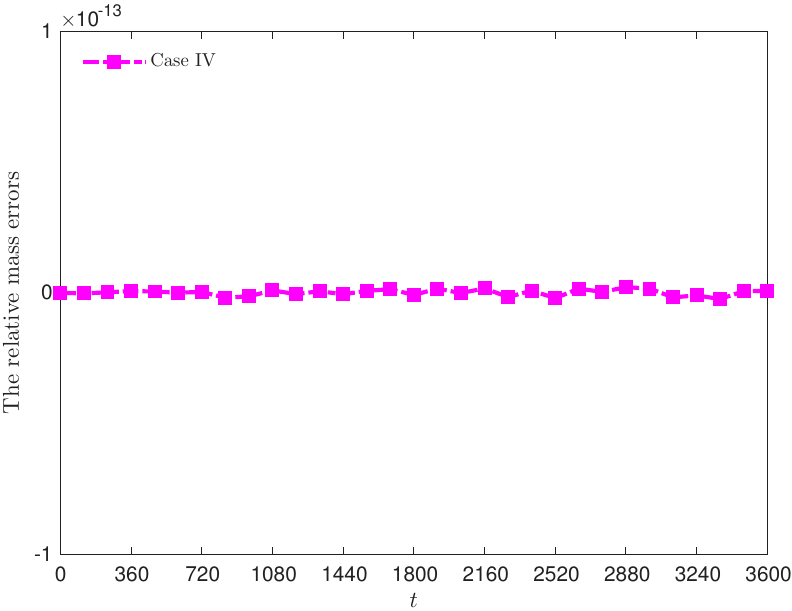}
    \caption{(Cases I--IV in Example \ref{exm:e2} (from left to right)) Evolution of the extrema of $C_{h}^{n}$ and $\widehat{C}_{h}^{n}$, the Lagrange multiplier $\gamma^{n}$, the number of iterations and the relative mass  errors (from top to bottom).}
    \label{fig:e2}
\end{figure}

In summary, the numerical experiments in Example \ref{exm:e2} demonstrate that the proposed CN-BCFD-P scheme can effectively simulate fluid flow in highly complex porous media. Even with relatively coarse spatial and temporal grid discretizations, the method yields accurate and physically reasonable numerical solutions, while also offering practical guidance for predicting and enhancing oil recovery. Moreover, the computed results and observed flow behaviors are consistent with those reported in \cite{WH00,CD07,VPV21,Eymard19,WRF26}, further confirming the reliability of the proposed discretization approach.
    
\begin{example}\label{exm:e3}   In this example, we aim to illustrate the implementation issue discussed in Remark \ref{rem:extrap}, which concerns reversing the sequence of the second-order CN-BCFD-P scheme, i.e., solving for velocity and pressure before the concentration. We comment that this reversed order may cause severe computational difficulties and even lead to simulation failure. Specifically, the alternative second-order scheme is as follows: 

\medskip
\textbf{Step 1:} (Velocity \& pressure update). Solve the velocity $\bm{U}_{h}^{n} \in \mal{U} \times \mal{V}$ and pressure $P_{h}^{n} \in \mal{C}$ through the linear algebraic system:
\begin{equation}\label{scm:cn2:u}
	\nabla_{h} \cdot \bm{U}_{h}^{n} = q^n,
\quad 
	\bm{\mal{A}}(\widetilde{C}_{h}^{n}) \bm{U}_{h}^{n} +  \nabla_{h} P_{h}^{n} = 0,
\end{equation}
where
$$
\widetilde{C}_{h}^{n}:= \left\{
\begin{aligned}
    &C_{h}^{1,*}, & n=1,\\
    &2C_{h}^{n-1} - C_{h}^{n-2}, & n \geq 2,
\end{aligned}
\right.
$$
and $C_{h}^{1,*}$ is the solution of BE-BCFD-P scheme at time $t=t^{1}$. In addition, to ensure the uniqueness of the solution $P_{h}^{n}$, we still enforce $P_{1,1}^{n}=0$.

\textbf{Step 2:} (Prediction of concentration). Solve the intermediate concentration $\widehat{C}_{h}^{n}$ via the CN-BCFD method
\begin{equation}\label{scm:cn2:c}
	\begin{aligned}
		\phi D_{\tau}\widehat{C}_{h}^{n} & + \f{1}{2}\big[ \nabla_{h} \cdot \big(  \bm{\mal{B}}(\widehat{C}_{h})\bm{U}_{h} - \bm{\mal{D}}(\bm{U}_{h}) \nabla_{h}\widehat{C}_{h}  \big) - q_{P}\widehat{C}_{h} \big]^{n}  \\
		& + \f{1}{2} \big[ \nabla_{h} \cdot \big(  \bm{\mal{B}}(C_{h})\bm{U}_{h} - \bm{\mal{D}}(\bm{U}_{h}) \nabla_{h}C_{h}  \big) - q_{P}C_{h} \big]^{n-1}  = \big[q_{I}c_{I}\big]^{n-1/2},
	\end{aligned}
\end{equation}
enclosed with homogeneous boundary conditions 
$$ 
\Big( \big[ \bm{\mal{B}}(\widehat{C}_{h})\bm{U}_{h} - \bm{\mal{D}}(\bm{U}_{h}) \nabla_{h}\widehat{C}_{h}   \big]^{n} + \big[   \bm{\mal{B}}(C_{h})\bm{U}_{h} - \bm{\mal{D}}(\bm{U}_{h}) \nabla_{h}C_{h}  \big]^{n-1} \Big) \cdot \bm{n}=0.
$$ 

\textbf{Step 3:} (Correction of concentration). Project the nodal solution $\widehat{C}_{h}^{n}$ to the constrained manifold with mass conservation and bound preservation via the weighted $L^{2}$-projection such that
\begin{equation}\label{scm:cn2:min}
	\begin{aligned}
		&\quad \min_{C_{h}^{n} \in \mal{C}}~ \f{1}{2} \|C_{h}^{n} - \widehat{C}_{h}^{n}\|_{{\rm M},\bar{\beta}^{n}}^{2}\\
		& \text{s.t.} \quad 0 \leq C_{h}^{n} \leq 1, \quad \bar{G}(C_{h}^{n})=0.
	\end{aligned}
\end{equation}
\end{example}
 
In the simulations, we choose $\mal{M}=1/16$ and adopt all other parameters from the four cases of Example \ref{exm:e2}. Figure \ref{fig:e3} displays the distributions of the extrapolated concentration $\widetilde{C}_h^{2}$, defined by \eqref{scm:cn2:u}, of the invading fluid at $t=t^{2}=240$ days for Cases I--IV, computed using the alternative second-order scheme \eqref{scm:cn2:u}--\eqref{scm:cn2:min}. We observe that for all four cases, the numerical extrapolated concentrations $\widetilde{C}_h^{2}$ obtained by \eqref{scm:cn2:u}--\eqref{scm:cn2:min} not only exceed the upper bound $1$, but also reach the value $2$ at certain spatial points. As a result, the term $\mal{M}^{1/4}\widetilde{C}_h^{2} + 1 - \widetilde{C}_h^{2}$ in the viscosity equation \eqref{mod:mu} vanishes, rendering the problem \eqref{scm:cn2:u} completely ill-posed. For clearer illustration, the corresponding viscosity distributions $\mu(\widetilde{C}_{h}^{2})$ for the four cases at $t=240$ days are also displayed in Fig. \ref{fig:e3}. Notably, in the upper-right region where the extrapolated concentration reaches $\widetilde{C}_h^{2}=2$, the viscosity $\mu(\widetilde{C}_h^{2})$ becomes unbounded. As a consequence, infinite values appear in the coefficient matrix of the velocity-pressure system \eqref{scm:cn2:u}, giving rise to an arithmetic singularity that causes the linear solver to fail and the program to terminate prematurely. This observation clearly demonstrates the importance of the computational sequence and corroborates the assertion made in Remark \ref{rem:extrap}.
\begin{figure}[!ht]
	\centering
    \includegraphics[width=0.24\textwidth]{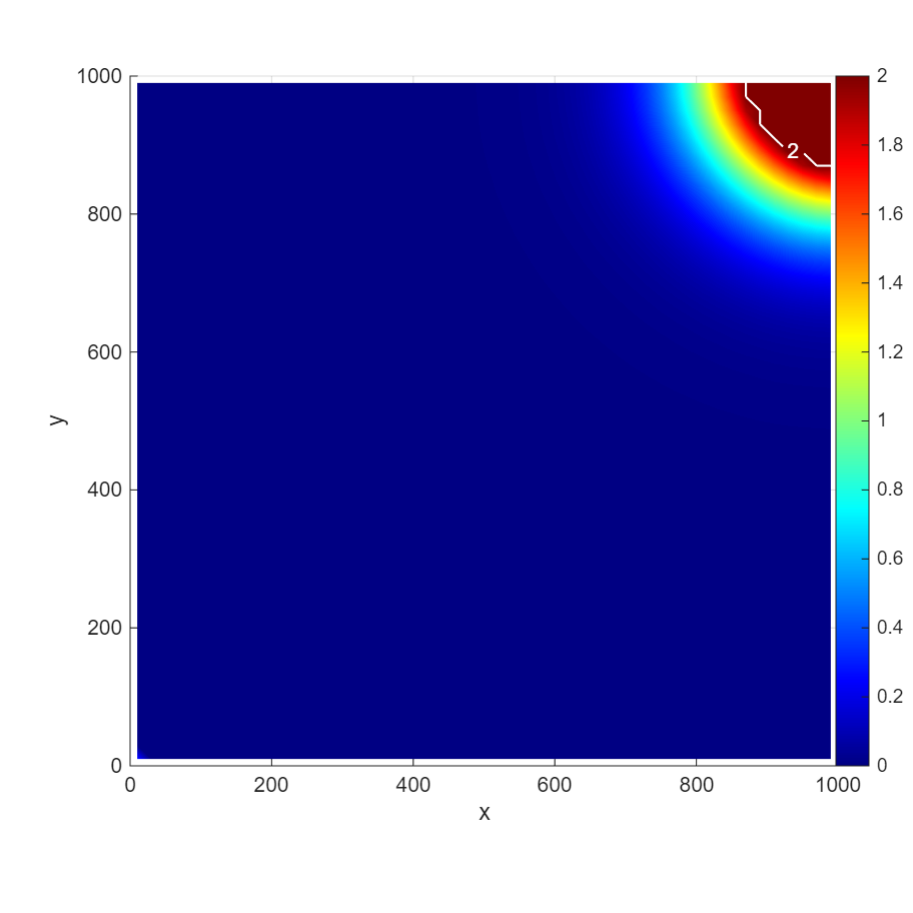}
    \includegraphics[width=0.24\textwidth]{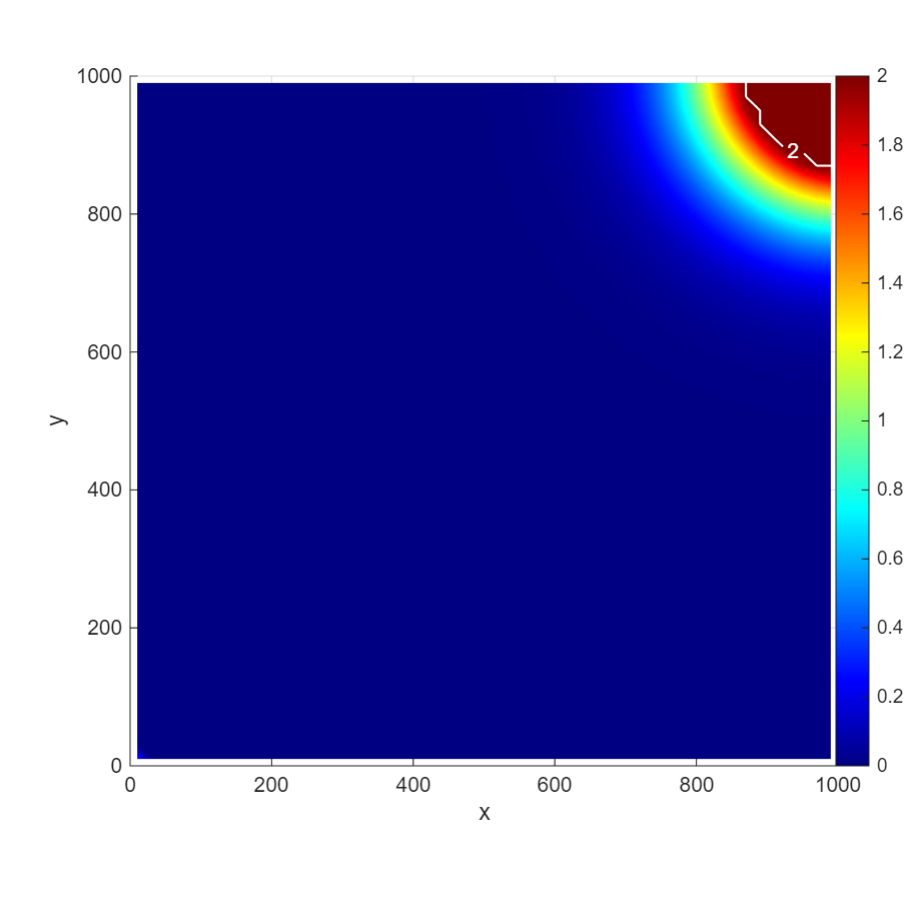}
    \includegraphics[width=0.24\textwidth]{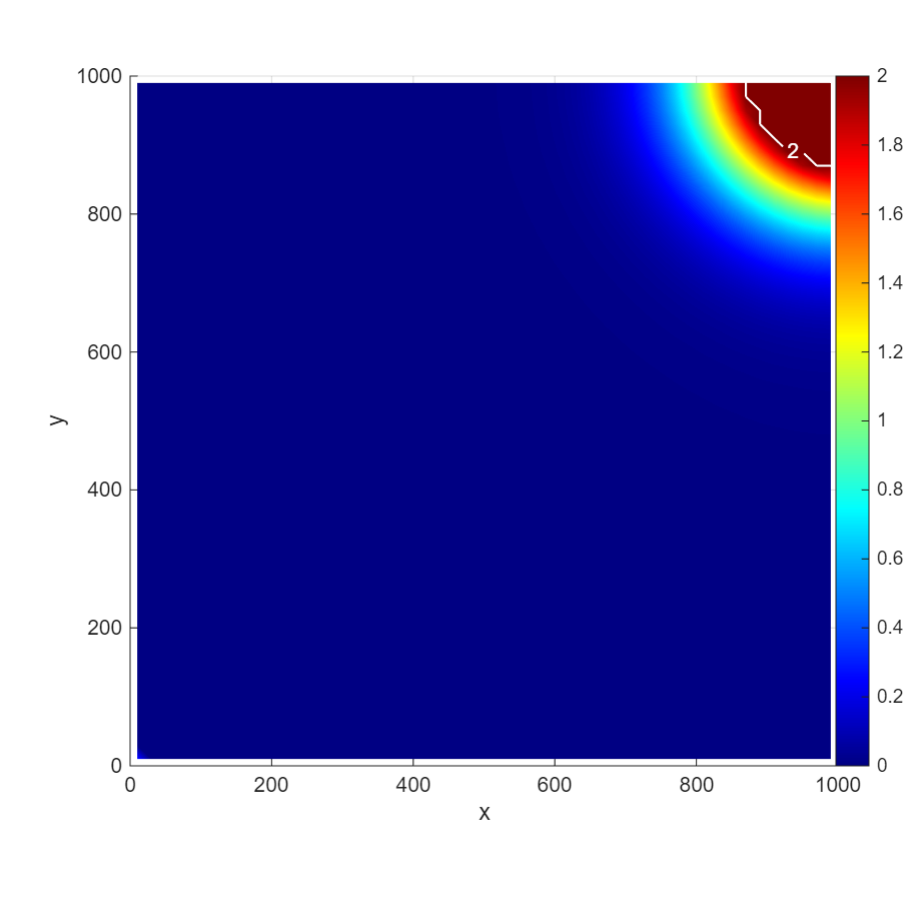}
    \includegraphics[width=0.24\textwidth]{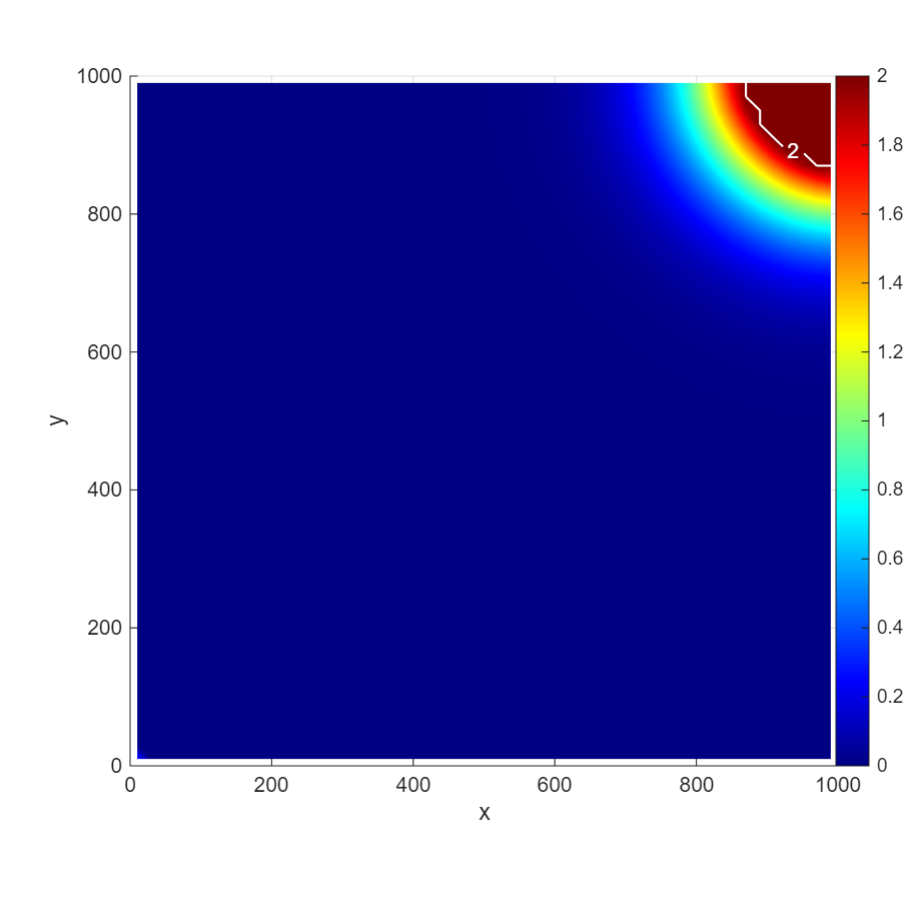}
    \includegraphics[width=0.24\textwidth]{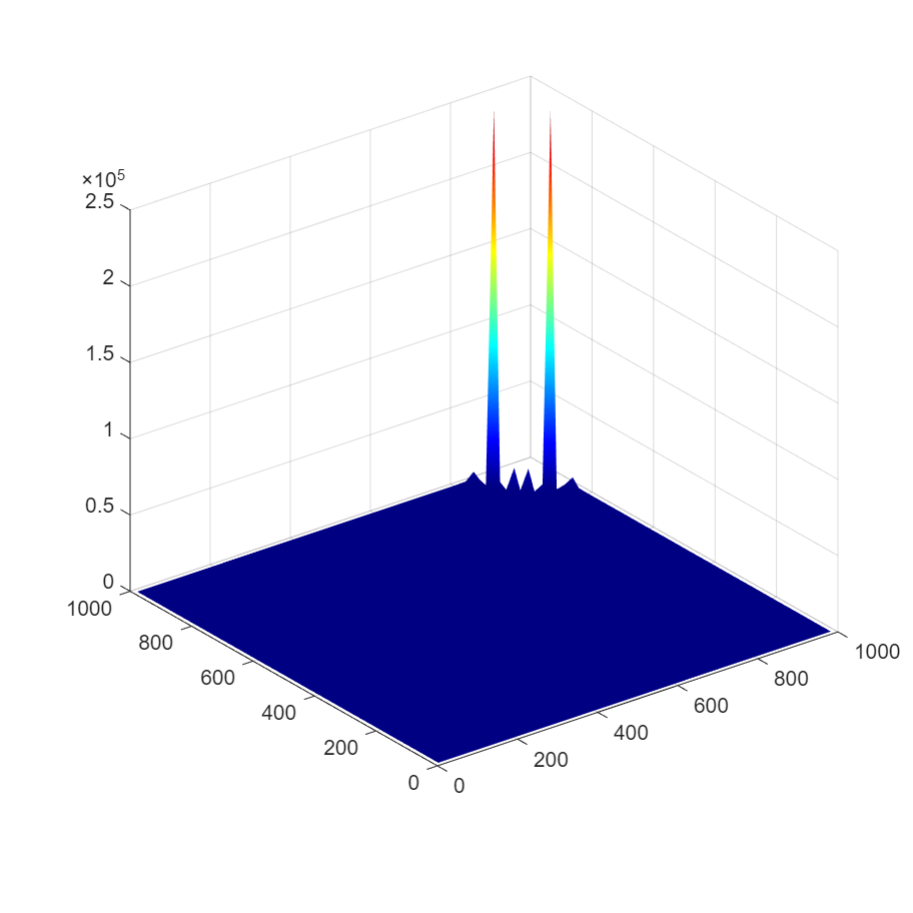}
    \includegraphics[width=0.24\textwidth]{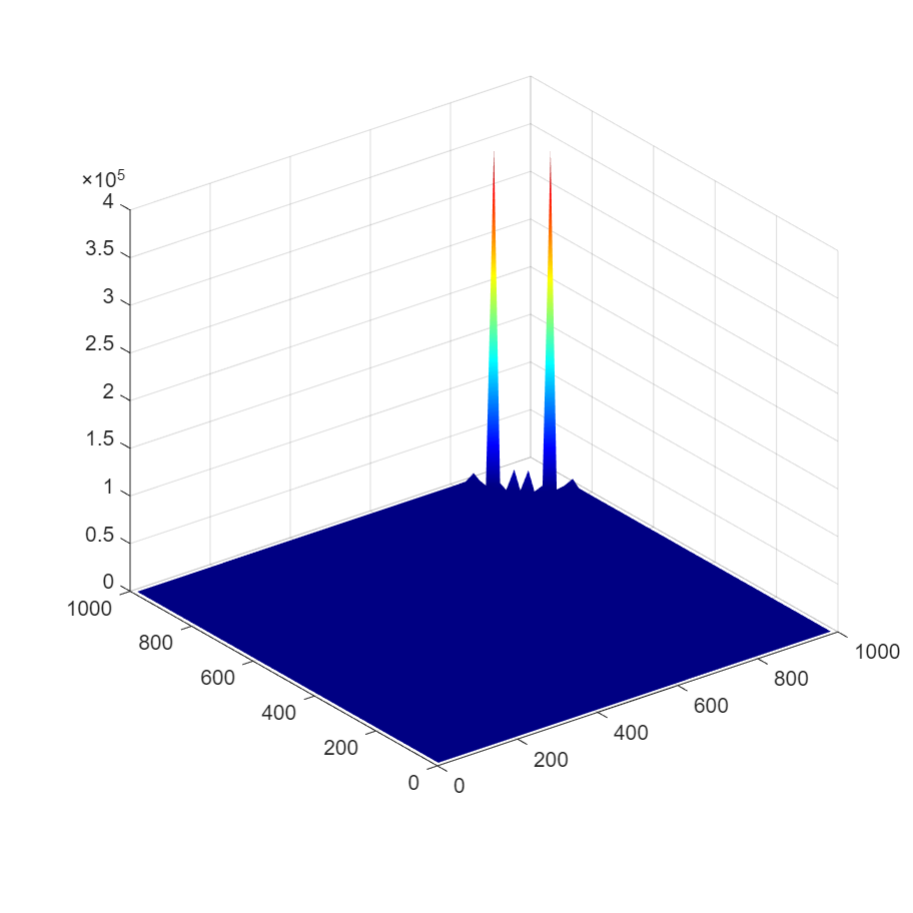}
    \includegraphics[width=0.24\textwidth]{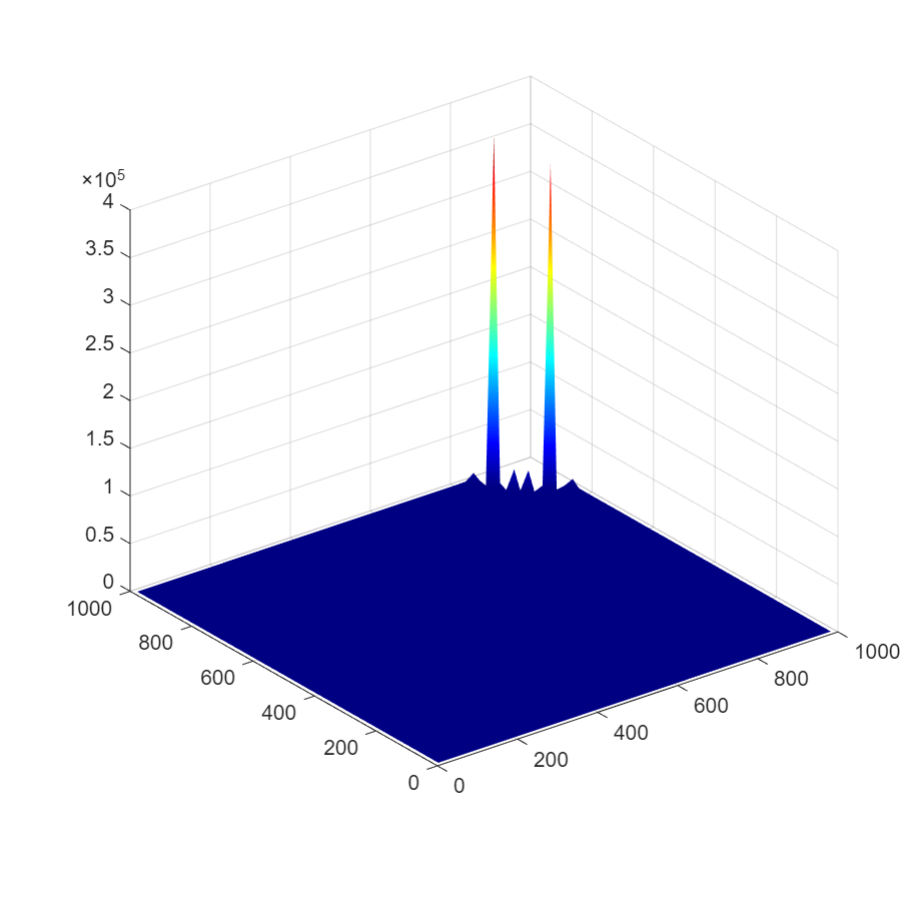}
    \includegraphics[width=0.24\textwidth]{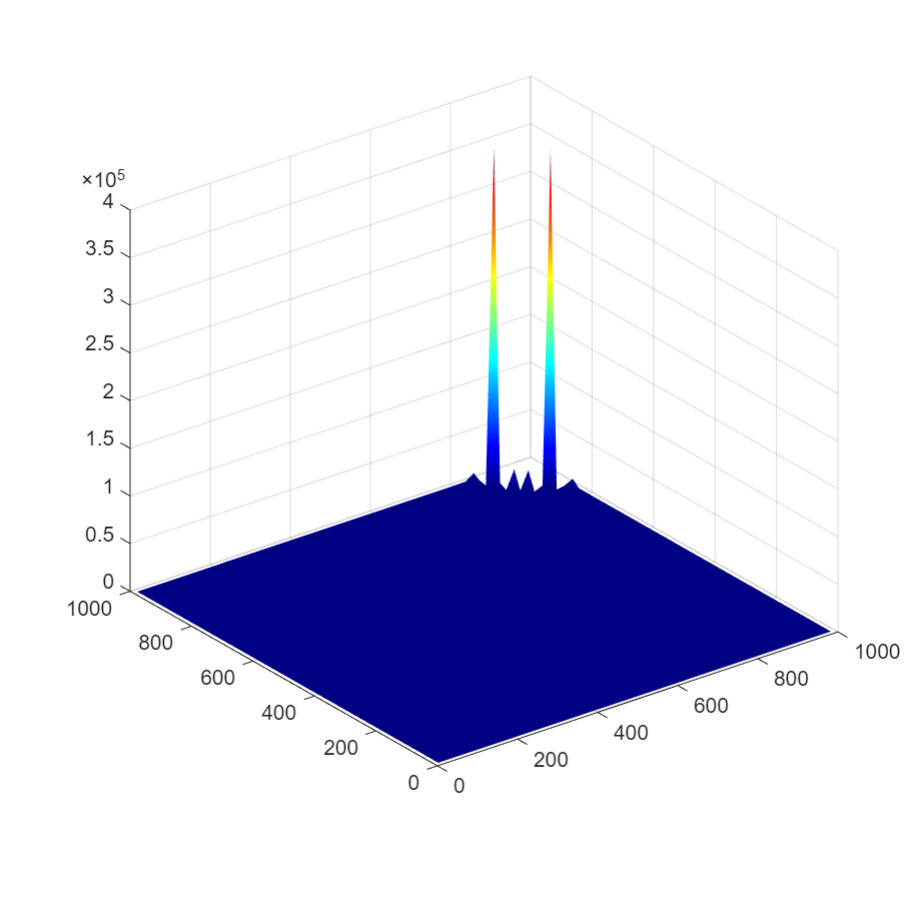}
    \setlength{\abovecaptionskip}{0.0cm} 
	\setlength{\belowcaptionskip}{0.0cm}
    \caption{ (Cases I--IV in Example \ref{exm:e3} (from left to right)) The extrapolated concentration $\widetilde{C}_h$ of the invading fluid and the viscosity $\mu(\widetilde{C}_h)$ at $t=240$ days obtained by the second-order scheme \eqref{scm:cn2:u}--\eqref{scm:cn2:min} (from top to bottom).}
    \label{fig:e3}
\end{figure}

\section{Conclusions}\label{sec:conclusion}
In this work, we have developed two linear, fully decoupled, structure‑preserving prediction-then-projection BCFD schemes for incompressible miscible displacement in porous media. The projection approach enforces both bound preservation and mass conservation at the discrete level, while different update orders for the velocity-pressure and concentration systems are adopted in the temporally first- and second-order schemes to guarantee well‑posedness and numerical stability. 

A major challenge in the error analysis arises from the time-dependent total mass of the present model (cf. \eqref{mod:mass_conservation}), which distinguishes it from the constant-mass settings considered in \cite{CYY24,CYY26,ZZF26} and precludes a direct extension of their analysis tools. To fill the gap, we have developed a tailored error-analysis framework  comprising three essential components: (i) the non-expansive property of the weighted $L^{2}$-projections (Lemmas \ref{lem:hatec:be} and \ref{lem:hatec:cn}); (ii) a novel "biased" auxiliary solution whose deviation from the exact solution is of the same order as the numerical error (Lemmas \ref{lem:alpha:be} and \ref{lem:alpha:cn}); and (iii) the crucial relation between the "biased" errors and the original errors (Lemmas \ref{lem:be:ec} and \ref{lem:cn:ec}). Building on these results, we have established rigorous optimal-order error estimates for the proposed BCFD projection algorithms applied to the incompressible miscible displacement problem in porous media (Theorems \ref{thm:err:be} and \ref{thm:err:cn}). The proofs rely on mathematical induction and discrete energy analysis, with the aforementioned lemmas playing an indispensable role.

Extensive numerical experiments have confirmed the accuracy, bound preservation, and mass conservation of the proposed schemes, and have further demonstrated their capability to capture the viscous fingering phenomenon in a standard five-spot well pattern. The extension of the proposed methodology and error-analysis framework to compressible miscible displacement \cite{GH17,GH22,GH23} and thermodynamically consistent immiscible two-phase flow \cite{SSY22,SSY24} is left for future work.

\section*{CRediT authorship contribution statement}
\textbf{Xiaoying Wang}: Methodology, Formal analysis, Software, Writing- Original draft.
\textbf{Shusen Xie}: Methodology, Writing- Reviewing and Editing, Funding acquisition.
\textbf{Hongfei Fu}: Conceptualization, Supervision, Methodology, Writing- Reviewing and Editing, Funding acquisition.

\section*{Declaration of competing interest}
The authors declare that they have no competing interests.

\section*{Data availability } Data will be made available on request.

 \section*{Acknowledgments}
This work was partially supported by the National Natural Science Foundation of China under grant 12131014, by the Shandong Provincial Natural Science Foundation under grant ZR2024MA023, and by the Shandong Provincial Key Laboratory of Stochastic System Control and Scientific Computing.

\appendix

\section{The weighted $H^{1}$-projection and its implementation}\label{App:A}
The weighted $L^{2}$-projections employed in Step 3 of the BE-BCFD-P scheme and Step 2 of the  CN-BCFD-P scheme can be replaced, respectively, by the following weighted $H^{1}$-projections:
\begin{equation}\label{scm:beh1:min}
	\begin{aligned}
		&\quad \min_{C_{h}^{n} \in \mal{C}}~ \f{1}{2} \|C_{h}^{n} - \widehat{C}_{h}^{n}\|_{{\rm M},\beta^{n}}^{2} + \f{1}{2} \| \nabla_{h} C_{h}^{n} - \nabla_{h} \widehat{C}_{h}^{n}\|_{{\rm M},\bm{\mal{D}}(\bm{U}_{h}^{n})}^{2}\\
		& \text{s.t.} \quad 0 \leq C_{h}^{n} \leq 1, \quad G(C_{h}^{n})=0,
	\end{aligned}
\end{equation}
and 
\begin{equation}\label{scm:cnh1:min}
	\begin{aligned}
		&\quad \min_{C_{h}^{n} \in \mal{C}}~ \f{1}{2} \|C_{h}^{n} - \widehat{C}_{h}^{n}\|_{{\rm M},\bar{\beta}^{n}}^{2} + \f{1}{2} \| \nabla_{h} C_{h}^{n} - \nabla_{h} \widehat{C}_{h}^{n}\|_{{\rm M},\bm{\mal{D}}(\widetilde{U}_{h}^{n-1/2})}^{2}\\
		& \text{s.t.} \quad 0 \leq C_{h}^{n} \leq 1, \quad \bar{G}(C_{h}^{n})=0.
	\end{aligned}
\end{equation}

The implementation of \eqref{scm:beh1:min} and \eqref{scm:cnh1:min} is analogous; hence, we take \eqref{scm:cnh1:min} as an example to illustrate the detailed implementation. The convex optimization problem \eqref{scm:cnh1:min} is equivalent to the following KKT conditions: 
\begin{equation}\label{scm:cnh1:k1}
	\bm{H}_{h} C_{h}^{n} = \bm{H}_{h} \widehat{C}_{h}^{n} + \lambda_h^{n} - \xi_h^{n} - \gamma^{n} \bar{\beta}^{n},
\end{equation}
\begin{equation}\label{scm:cnh1:k2}
	\bar{G}(C_{h}^{n})=0,
\end{equation}
\begin{equation}\label{scm:cnh1:k3}
	 0 \leq C_{h}^{n} \leq 1, \quad \lambda_h^{n}C_{h}^{n} =0,\quad \xi_h^{n}\,(1-C_{h}^{n})=0, \quad \lambda_h^{n} \geq 0, \quad \xi_h^{n} \geq 0,
\end{equation}
where $\bm{H}_{h}:=\bar{\beta}^{n} \bm{I}_{h} - \nabla_{h} \cdot \bm{\mal{D}}(\widetilde{\bm{U}}_{h}^{n-1/2}) \nabla_{h}$, $\bm{I}_{h}$ is the identity operator, and $\lambda_h^{n}$, $\xi_h^{n} \in \mal{C}$ and $ \gamma^{n} \in \mathbb{R}$ are the corresponding Lagrange multipliers as the first-order $L^{2}$-projection scheme.
Let
$
V_h := C_{h}^{n} - \lambda_{h}^{n} + \xi_{h}^{n}.
$
Then, from the KKT condition \eqref{scm:cnh1:k3}, we obtain
$$
C_{h}^{n} = \mbb{P} V_{h} := \max\{0, \min\{1,V_{h}\}\},\quad 
\lambda_{h}^{n} = \max\{0,-V_{h}\}, \quad 
\xi_{h}^{n} = \max\{0,V_{h}-1\}.
$$
Consequently, the coupled system \eqref{scm:cnh1:k1}–\eqref{scm:cnh1:k2} can be equivalently reformulated as the following nonlinear system:
\begin{equation}\label{F:cn}
    \begin{aligned}
        F_{1}( V_{h}, \gamma^{n} ) 
        & =  \big( \bm{H}_{h}  - \bm{I}_{h} \big)  C_{h}^{n}  + V_{h} + \gamma^{n} \bar{\beta}^{n} - \bm{H}_{h} \widehat{C}_{h}^{n}\\
        & = \big( \bm{H}_{h}  - \bm{I}_{h} \big)  \mbb{P} V_{h}  + V_{h} + \gamma^{n} \bar{\beta}^{n} - \bm{H}_{h} \widehat{C}_{h}^{n} = 0,\\
        F_{2}( V_{h}, \gamma^{n} ) 
        & = \big( \bar{\beta}^{n} C_{h}^{n} , 1 \big)_{\rm M} - \big( \bar{\beta}^{n} \widehat{C}_{h}^{n}, 1 \big)_{\rm M} \\
        & = \big( \bar{\beta}^{n} \mbb{P} V_{h} , 1 \big)_{\rm M} - \big( \bar{\beta}^{n} \widehat{C}_{h}^{n}, 1 \big)_{\rm M} = 0.
    \end{aligned}
\end{equation}
 Analogous to Ref. \cite{CYY24}, the semismooth Newton method can be employed to solve the nonlinear system \eqref{F:cn}:
Given iterate $V_{h}\in \mal{C}$ and $\gamma^{n} \in \mathbb{R}$, the generalized Jacobian $\bm{J}: \mal{C} \times \mathbb{R} \rightarrow \mal{C} \times \mathbb{R}$, acting on an increment $(\delta V, \delta \gamma)^{\top}$, admits the representation
$$
\bm{J}\left(
\begin{aligned}
    \delta V\\
    \delta \gamma
\end{aligned}\right) = \left(
\begin{aligned}
    & \big( \bm{H}_{h}  - \bm{I}_{h} \big)  \bm{1}_{V_{h} \in [0,1]} \delta V + \delta V + \bar{\beta}^{n} \delta \gamma\\
    & h^{x}h^{y} \sum_{i=1}^{N_{x}}\sum_{j=1}^{N_{y}} \bar{\beta}_{i,j}^{n} \bm{1}_{V_{i,j} \in [0,1]} [\delta V]_{i,j} 
\end{aligned}\right),
$$
where $\bm{1}_{V_{h} \in [0,1]}$ denotes the indicator function such that $\bm{1}_{V_{h} \in [0,1]} =1$ if $V_{h} \in [0,1]$ and $0$ otherwise. Set the initial iterates as $V_{h,0}=\widehat{C}_{h}^{n}$ and $\gamma_{0}=0$. For \(\ell = 0, 1, \dots\), the semismooth Newton iterative proceeds as follows:

\medskip
\textbf{Step 1:} (Residual evaluation). Compute the nonlinear residuals
$$
\begin{aligned}
    &F_{1,\ell}=\big( \bm{H}_{h}  - \bm{I}_{h} \big)  \mbb{P} V_{h,\ell}  + V_{h,\ell} + \gamma_{\ell} \bar{\beta}^{n} - \bm{H}_{h} \widehat{C}_{h}^{n},\\
    &F_{2,\ell}=\big( \bar{\beta}^{n} \mbb{P} V_{h,\ell} , 1 \big)_{\rm M} - \big( \bar{\beta}^{n} \widehat{C}_{h}^{n}, 1 \big)_{\rm M}.
\end{aligned}
$$

\textbf{Step 2:} (Increment solution).  Solve the two linear systems
$$
\big( I_{h} + \big( \bm{H}_{h}  - \bm{I}_{h} \big)  \bm{1}_{V_{h,\ell} \in [0,1]} \big) \delta V_{1} = - F_{1,\ell},\quad
\big( I_{h} + \big( \bm{H}_{h}  - \bm{I}_{h} \big)  \bm{1}_{V_{h,\ell} \in [0,1]} \big) \delta V_{2} = \bar{\beta}^{n}.
$$
Then compute the increments
$$
\delta \gamma = \frac{F_{2,\ell} + (\bar{\beta}^{n} \bm{1}_{V_{h,\ell} \in [0,1]} \delta V_1,1)}{(\bar{\beta}^{n} \bm{1}_{V_{h,\ell} \in [0,1]} \delta V_2,1)},\quad
\delta V = \delta V_1 -  \delta \gamma \delta V_2.
$$

\textbf{Step 3:} (Update). Update the multiplier and the primary variable by
$$
\gamma_{\ell+1} = \gamma_{\ell} + \delta \gamma,\quad
V_{h,\ell+1} =  V_{h,\ell} + \delta V.
$$

\textbf{Step 4:} (Termination check). If the residual norm $\|V_{h,\ell+1} - V_{h,\ell}\|_\infty$ falls below a prescribed tolerance, then terminate the iteration, accept $(V_{h,\ell+1}, \gamma_{\ell+1})$ as the approximate solution, and set the concentration and Lagrange multipliers as follows:
$$
C_h^n = \max\{0, \min\{ V_{h,\ell+1},1\}\}, \quad
\lambda_h^n = \max\{0,-V_{h,\ell+1}\}, \quad
\xi_h^n = \max\{0,V_{h,\ell+1}-1\},\quad
\gamma^n = \gamma_{\ell+1}.
$$
Otherwise, set $\ell \leftarrow \ell+1$ and return to Step 1.
\bibliographystyle{elsarticle-num}
\bibliography{ref_bp_mc.bib}

@book{KKT08,
	author = {Ito, Kazufumi and Kunisch, Karl},
	title = {Lagrange Multiplier Approach to Variational Problems and Applications},
	publisher = {SIAM},
	year = {2008},
	address = {Philadelphia}
}

@book{Ewing83,
	author = {Ewing, Richard E.},editor = {Richard E. Ewing},
	title = {The Mathematics of Reservoir Simulation},
	publisher = {SIAM},
	year = {1983},
	address = {Philadelphia},
	edition   = {}
}

@book{Bear72,
    author={Bear, Jacob},
   title={Dynamics of Fluids in Porous Media},
    publisher = {Elsevier},
    year={1972},
    address = {New York},
    edition   = {}
	
}

@book{Peaceman77,
  author = {Peaceman, D. W.},
  title = {Fundamentals of Numerical Reservoir Simulation},
  publisher = {Elsevier},
  address = {Amsterdam},
  year = {1977}
}

@article{Lazarov96-SIAMNA,
author = {Lazarov, R. D. and Mishev, Ilya D. and Vassilevski, P. S.},
title = {Finite Volume Methods for Convection-Diffusion Problems},
journal = {SIAM J. Numer. Anal.},
volume = {33},
pages = {31-55},
year = {1996}
}

@article{LD17-JCAM,
title = {The mass-preserving and modified-upwind splitting {DDM} scheme for time-dependent convection-diffusion equations},
journal = {J. Comput. Appl. Math.},
volume = {317},
pages = {247-273},
year = {2017},
author = {Zhongguo Zhou and Dong Liang}
}

@article{LD20-JCP,
title = {The conservative splitting domain decomposition method for multicomponent contamination flows in porous media},
journal = {J. Comput. Phys.},
volume = {400},
pages = {108974},
year = {2020},
author = {Dong Liang and Zhongguo Zhou}
}

@article{viscosity63,
    author = {Koval, E.J.},
    title = {A Method for Predicting the Performance of Unstable Miscible Displacement in Heterogeneous Media},
    journal = {SPE J.},
    volume = {3},
    pages = {145-154},
    year = {1963}
}

@article{CCGM26,
title = {A cell centered {G}alerkin method for miscible displacement in heterogeneous porous media},
journal = {Comput. Methods Appl. Mech. Engrg.},
volume = {450},
pages = {118608},
year = {2026},
author = {Maurice S. Fabien}
}

@article{Douglas83cmd,
  title={Numerical Methods for a Model for Compressible Miscible Displacement in Porous Media},
  author={Douglas, J. and Roberts, J. E.},
  journal={Math. Comp.},
  volume={41},
  pages={441-459},
  year={1983},
}

@article{CYY26,
	title = {Positivity preserving and mass conservative projection methods for the {P}atlak-{K}eller-{S}egel equation},
	journal = {Appl. Numer. Math. },
	volume = {221},
	pages = {63-79},
	year = {2026},
	author = {Yongyong Cai and Fenghua Tong}
}

@article{CYY24,
	author = {Tong, Fenghua and Cai, Yongyong},
	title = {Positivity preserving and mass conservative projection method for the {P}oisson-{N}ernst-{P}lanck equation},
	journal = {SIAM J. Numer. Anal.},
	volume = {62},
	pages = {2004-2024},
	year = {2024}
}

@article{SQW10,
title = {On positivity-preserving high order discontinuous {G}alerkin schemes for compressible {E}uler equations on rectangular meshes},
journal = {J. Comput. Phys.},
volume = {229},
pages = {8918-8934},
year = {2010},
author = {Xiangxiong Zhang and Chi-Wang Shu}
}

@article{SJ92,
	author = {Shen, Jie},
	title = {On error estimates of projection methods for {N}avier-{S}tokes equations: {F}irst-order schemes},
	journal = {SIAM J. Numer. Anal.},
	volume = {29},
	pages = {57-77},
	year = {1992}
}

@article{CYY18,
	author = {Cai, Yongyong and Shen, Jie},
	year = {2018},
	pages = {2057-2090},
	title = {Error estimates for a fully discretized scheme to a {C}ahn-{H}illiard phase-field model for two-phase incompressible flows},
	volume = {87},
	journal = {Math. Comput.}
}

@article{YJ22,
	author = {Yang, Jiang and Yuan, Zhaoming and Zhou, Zhi},
	year = {2022},
	pages = {76},
	title = {Arbitrarily high-order maximum bound preserving schemes with cut-off postprocessing for {A}llen-{C}ahn equations},
	volume = {90},
	journal = {J. Sci. Comput.}	
}

@article{LBY20,
	author = {Li, Buyang and Yang, Jiang and Zhou, Zhi},
	title = {Arbitrarily high-order exponential cut-off methods for preserving maximum principle of parabolic equations},
	journal = {SIAM J. Sci. Comput.},
	volume = {42},
	pages = {A3957-A3978},
	year = {2020}
}

@article{CWB19,
title = {Positivity-preserving, energy stable numerical schemes for the {C}ahn-{H}illiard equation with logarithmic potential},
journal = {J. Comput. Phys. X},
volume = {3},
pages = {100031},
year = {2019},
author = {Wenbin Chen and Cheng Wang and Xiaoming Wang and Steven M. Wise}
}

@article{LJG18,
  title = {Positivity-preserving and asymptotic preserving method for 2{D} {K}eller-{S}egal equations},
  author = {Liu, Jianguo and Wang, Li and Zhou, Zhennan},
  journal = {Math. Comput.},
  volume = {87},
  pages = {1165--1189},
  year = {2018}
}

@article{HJW20,
  title = {A fully discrete positivity-preserving and energy-dissipative finite difference scheme for {P}oisson-{N}ernst-{P}lanck equations},
  author = {Hu, Jingwei and Huang, Xiaodong},
  journal = {Numer. Math.},
  volume = {145},
  pages = {77--115},
  year = {2020},
}

@article{HZ23,
  title = {Positivity-preserving and energy-dissipative finite difference schemes for the {F}okker-{P}lanck and {K}eller-{S}egel equations},
  author = {Jingwei Hu and Xiangxiong Zhang},
  journal = {IMA J. Numer. Anal.},
  volume = {43},
  pages = {1450-1484},
  year = {2023},
}

@article{SJ21,
author = {Huang, Fukeng and Shen, Jie},
title = {Bound/Positivity Preserving and Energy Stable Scalar auxiliary Variable Schemes for Dissipative Systems: Applications to {K}eller-{S}egel and {P}oisson-{N}ernst-{P}lanck Equations},
journal = {SIAM J. Sci. Comput.},
volume = {43},
pages = {A1832-A1857},
year = {2021}
}

@article{ZBY23,
	title = {A high-order two-grid difference method for nonlinear time-fractional biharmonic problems and its unconditional $\alpha$-robust error estimates},
	journal = {J. Sci. Comput.},
	volume = {96},
	pages = {54},
	year = {2023},
	author = {Fu, Hongfei and Zhang, Bingyin and Zheng, Xiangcheng}
}

@article{ZBY24,
author = {Zhang, Bingyin and Fu, Hongfei},
year = {2024},
pages = {421-455},
title = {An efficient two-grid high-order compact difference scheme with variable-step {BDF}2 method for the semilinear parabolic equation},
volume = {58},
journal = {ESAIM: M2AN},
}

@article{Russell85,
	author = {Russell, Thomas F.},
	title = {Time Stepping Along Characteristics with Incomplete Iteration for a {G}alerkin Approximation of Miscible Displacement in Porous Media},
	journal = {SIAM J. Numer. Anal.},
	volume = {22},
	pages = {970-1013},
	year = {1985}
}

@article{Rui13,
    author = {Hao Pan and Hongxing Rui},
	title = {A mixed element method for {D}arcy-{F}orchheimer incompressible miscible displacement problem},
	journal = {Comput. Methods Appl. Mech. Engrg.},
	volume = {264},
	pages = {1-11},
	year = {2013}
}

@article{WH00,
	author = {Wang, Hong and Liang, Dong and Ewing, Richard E. and Lyons, Stephen L. and Qin, Guan},
	title = {An Approximation to Miscible Fluid Flows in Porous Media with Point Sources and Sinks by an {E}ulerian-{L}agrangian Localized Adjoint Method and Mixed Finite Element Methods},
	journal = {SIAM J. Sci. Comput.},
	volume = {22},
	pages = {561-581},
	year = {2000}
}

@article{Arbogast97,
	author = {Arbogast, Todd and Wheeler, Mary F. and Yotov, Ivan},
	title = {Mixed Finite Elements for Elliptic Problems with Tensor Coefficients as Cell-Centered Finite Differences},
	journal = {SIAM J. Numer. Anal.},
	volume = {34},
	pages = {828-852},
	year = {1997}
}

@article{Weiser88,
	author = {Weiser, Alan and Wheeler, Mary Fanett},
	title = {On Convergence of Block-Centered Finite Differences for Elliptic Problems},
	journal = {SIAM J. Numer. Anal.},
	volume = {25},
	pages = {351-375},
	year = {1988}
}

@article{XJ22,
	author = {J. Xu and S. Xie and H. Fu},
	title = {A two-grid block-centered finite difference method for the nonlinear regularized long wave equation},
	journal = {Appl. Numer. Math.},
	volume = {171},
	pages = {128-148\color{black}},
	year = {2022}
}

@article{XJ25,
	author = {Jie Xu and Hongfei Fu},
	title = {A decoupled linear, mass-conservative block-centered finite difference method for the {K}eller-{S}egel Chemotaxis system},
	journal = {J. Comput. Phys.},
	volume = {526},
	pages = {113775},
	year = {2025}
}

@article{SYL21,
	author = {Shi, Yilei and Xie, Shusen and Liang, Dong and Fu, Kai},
	title = {High Order Compact Block-Centered Finite Difference Schemes for Elliptic and Parabolic Problems},
	journal = {J. Sci. Comput.},
	volume = {87},
	pages = {86},
	year = {2021}
}

@article{BH,
	author = { Berikelashvili, Givi and  Gupta, Murli M. and  Mirianashvili, Manana},
	title = {Convergence of Fourth Order Compact Difference Schemes for Three-Dimensional Convection-Diffusion Equations},
	journal = {SIAM J. Numer. Anal.},
	volume = {45},
	pages = {443-455},
	year = {2007}
}

@article{CD07,
	author = {Chainais-Hillairet, Claire and Droniou, J\'{e}r\^{o}me},
	title = {Convergence Analysis of a Mixed Finite Volume Scheme for an Elliptic-Parabolic System Modeling Miscible Fluid Flows in Porous Media},
	journal = {SIAM J. Numer. Anal.},
	volume = {45},
	pages = {2228-2258},
	year = {2007}
}

@article{VPV21,
    author = {L. {Beir$\tilde{a}$o da Veiga} and A. Pichler and G. Vacca},
	title = {A virtual element method for the miscible displacement of incompressible fluids in porous media},
	journal = {Comput. Methods Appl. Mech. Engrg.},
	volume = {375},
	pages = {113649},
	year = {2021}
}

@article{Eymard19,
	author = {Droniou, J$\'{e}$r$\hat{o}$me and  Eymard, Robert and Talbot, Kyle S.},
	title = {Unified Convergence Analysis of Numerical Schemes for a Miscible Displacement Problem},
	journal = {Found. Comput. Math.},
	volume = {19},
	pages = {333-374},
	year = {2019}
}

@book{TNA09,
    author={Weimin Han and Kendall E. Atkinson},
	title={Theoretical Numerical Analysis: A Functional Analysis Framework},
    publisher = {Springer},
    year={2009},
    address = {New York},
    edition   = {}	
}

@article{ZXX18,
author = {Li, Hao and Xie, Shusen and Zhang, Xiangxiong},
title = {A High Order Accurate Bound-Preserving Compact Finite Difference Scheme for Scalar Convection Diffusion Equations},
journal = {SIAM J. Numer. Anal.},
volume = {56},
pages = {3308-3345},
year = {2018}
}

@article{XY19,
author = {Van der Vegt, J. J. W. and Xia, Yinhua and Xu, Yan},
title = {Positivity Preserving Limiters for Time-Implicit Higher Order Accurate Discontinuous {G}alerkin Discretizations},
journal = {SIAM J. Sci. Comput.},
volume = {41},
pages = {A2037-A2063},
year = {2019}
}

@article{SJ22-CMAME,
author = {Qing Cheng and Jie Shen},
title = {A new {L}agrange multiplier approach for constructing structure preserving schemes, {I}. {P}ositivity preserving},
journal = {Comput. Methods Appl. Mech. Engrg.},
volume = {391},
pages = {114585},
year = {2022}
}

@article{SJ22-SIAM,
author = {Cheng, Qing and Shen, Jie},
title = {A New {L}agrange Multiplier Approach for Constructing Structure Preserving Schemes, {II}. {B}ound Preserving},
journal = {SIAM J. Numer. Anal.},
volume = {60},
pages = {970-998},
year = {2022}
}

@article{GH17,
author = {Guo, Hui and Yang, Yang},
title = {Bound-Preserving Discontinuous {G}alerkin Method for Compressible Miscible Displacement in Porous Media},
journal = {SIAM J. Sci. Comput.},
volume = {39},
pages = {A1969-A1990},
year = {2017}
}

@article{GH22,
title = {Bound-preserving discontinuous {G}alerkin methods with second-order implicit pressure explicit concentration time marching for compressible miscible displacements in porous media},
journal = {J. Comput. Phys.},
volume = {463},
pages = {111240},
year = {2022},
author = {Wenjing Feng and Hui Guo and Yue Kang and Yang Yang}
}

@article{GH23,
title = {Sign-preserving second-order {IMPEC} time discretization and its application in compressible miscible displacement with {D}arcy-{F}orchheimer models},
journal = {J. Comput. Phys.},
volume = {474},
pages = {111775},
year = {2023},
author = {Wenjing Feng and Hui Guo and Lulu Tian and Yang Yang}
}

@article{ZZF26,
title = {A linear, decoupled and positivity-preserving time-staggered block-centered finite difference method for the multi-species {Keller-Segel} chemotaxis system},
journal = {Arxiv},
volume = { },
pages = {arXiv:2607.00713},
year = {2026},
author = {Ao Zhang and Bingyin Zhang and Hongfei Fu}
}

@article{WRF26,
title = {A linear, mass-conserving, multi-time-step compact block-centered finite difference method for incompressible miscible displacement problem in porous media},
journal = {J. Comput. Appl. Math.},
volume = {485},
pages = {117603},
year = {2026},
author = {Xiaoying Wang and Hongxing Rui and Hongfei Fu}
}

@article{WLF24,
  title   = {Optimal error estimate of unconditionally positivity-preserving, mass-conserving and energy stable method for the {K}eller-{S}egel chemotaxis model},
  journal = {Math. Comp.},
  volume  = {94},
  pages   = {2761--2793},
  year    = {2025},
  author  = {K. Wang and E. Liu and X. Feng},
}

@article{LXL25,
title = {A class of high-order physics-preserving schemes for thermodynamically consistent model of incompressible and immiscible two-phase flow in porous media},
journal = {J. Comput. Phys.},
volume = {529},
pages = {113864},
year = {2025},
author = {Xiaoli Li and Yujing Yan and Huangxin Chen}
}

@article{SSY24,
title = {An energy stable and positivity-preserving computational method for compressible and immiscible two-phase flow in porous media},
journal = {J. Computat. Phys.},
volume = {519},
pages = {113391},
year = {2024},
author = {Jisheng Kou and Huangxin Chen and Amgad Salama and Shuyu Sun}
}

@article{SSY22,
title = {An energy stable linear numerical method for thermodynamically consistent modeling of two-phase incompressible flow in porous media},
journal = {J. Computat. Phys.},
volume = {451},
pages = {110854},
year = {2022},
author = {Jisheng Kou and Xiuhua Wang and Shigui Du and Shuyu Sun}
}

@article{SX,
title = {Unconditionally positivity preserving and energy dissipative schemes for {P}oisson-{N}ernst-{P}lanck equations},
journal = {Numer. Math.},
volume = {148},
pages = {671-697},
year = {2021},
author = {Jie Shen and Jie Xu}
}

\end{document}